\documentclass[a4paper]{amsart}

\usepackage[varg]{txfonts}
\usepackage{amsmath,amssymb,amsthm,bm,enumerate,array}
\usepackage[all]{xy}
\usepackage[dvipdfmx]{graphicx}
\usepackage[dvips]{color}

\theoremstyle{plain}

\theoremstyle{definition}


\newcommand{\R}{\mathbb{R}}
\newcommand{\Z}{\mathbb{Z}}


\newcommand{\D}{\mathcal{D}}

\newcommand{\calH}{\mathcal{H}}

\newcommand{\calS}{\mathcal{S}}

\newcommand{\calT}{\mathcal{T}}

\newcommand{\bfn}{\bm n}
\newcommand{\bfu}{\bm u}
\newcommand{\bfv}{\bm v}
\newcommand{\bfw}{\bm w}
\newcommand{\bfx}{\bm x}
\newcommand{\bfy}{\bm y}
\newcommand{\bfz}{\bm z}
\newcommand{\bfxi}{\bm \xi}


\newcommand{\abs}[1]{\left\lvert {#1} \right\rvert}

\newcommand{\Conf}{\mathrm{Conf}}

\newcommand{\emb}[2]{\mathcal{K}_{#1 ,#2}}

\newcommand{\Imm}[2]{\mathcal{I}_{#1, #2}}
\newcommand{\Int}{\mathrm{Int}\,}
\newcommand{\Inj}{\mathrm{Inj}}

\newcommand{\supp}{\mathrm{supp}}
\newcommand{\vol}{{\rm vol}}
\newcommand{\zero}{{\boldsymbol 0}}

\numberwithin{equation}{section}
\numberwithin{figure}{section}

\sloppy
\allowdisplaybreaks[1]

\title{Lin-Wang type formula for Haefliger invariant}
\author{Keiichi Sakai}
\address{Faculty of Science, Shinshu University, 3-1-1 Asahi, Matsumoto, Nagano 390-8621, Japan}
\email{ksakai@math.shinshu-u.ac.jp}
\date{\today}

\begin{document}
\maketitle

\begin{abstract}
In this paper we study Haefliger invariant for long embeddings $\R^{4k-1}\hookrightarrow\R^{6k}$ in terms of the self-intersections of their generic projections to $\R^{6k-1}$.
We define the notion of ``crossing changes'' of the embeddings at the self-intersections and describe the change of the isotopy classes under crossing changes using the linking numbers of the double point sets in $\R^{4k-1}$.
This formula is a higher dimensional analogue to that of X.-S.\ Lin and Z.\ Wang for the order two invariant for classical knots, and as a consequence we show that Haefliger invariant behaves like it is of order two.
We also give an alternative proof for a result of M.~Murai and K.~Ohba concerning ``unknotting numbers'' of embeddings $\R^3\hookrightarrow\R^6$.
Our formula enables us to define an invariant for generic long immersions $\R^{4k-1}\looparrowright\R^{6k-1}$ which are liftable to embeddings $\R^{4k-1}\hookrightarrow\R^{6k}$.
This invariant corresponds to V.~Arnold's plane curve invariant in Lin-Wang theory, but in general our invariant does not coincide with order one invariant of T.~Ekholm.
\end{abstract}

\section{Introduction}\label{s:intro}
A \emph{long $j$-embedding} in $\R^n$ is an embedding $\R^j\hookrightarrow\R^n$ which is the standard inclusion outside a compact set.
We denote by $\emb{n}{j}$ the space of long $j$-embeddings in $\R^n$.
Similarly we denote the space of long immersions $\R^j\looparrowright\R^n$ by $\Imm{n}{j}$.

In the previous work \cite{K08} the author constructed, for some pairs $(n,j)$, a cochain map $I:\D^*\to\Omega^*_{\mathit{\mathit{DR}}}(\emb{n}{j})$ from a complex $\D^*$ of \emph{graphs} to de Rham complex of $\emb{n}{j}$ via \emph{configuration space integrals} associated with graphs.
For other interesting pairs, in particular for $(n,j)=(6k,4k-1)$, the map $I$ has not yet been proved to be a cochain map, and it is not clear whether graph cocycles in $\D^*$ yield closed forms of $\emb{6k}{4k-1}$.
But in \cite{K08} we found a cocycle $H\in\D^*$ and a differential form $c\in\Omega^0_{\mathit{DR}}(\emb{6k}{4k-1})$ such that $\calH :=I(H)+c\in\Omega^0_{\mathit{DR}}(\emb{6k}{4k-1})$ is closed and is equal (up to sign) to \emph{Haefliger invariant} which gives an isomorphism $\pi_0(\emb{6k}{4k-1})\cong\Z$.
This integral expression $\calH$ looks very similar to that for the finite type invariant $v_2$ of order two for classical knots \cite{BottTaubes94,Kohno94}.

In this paper, based on the integral expression $\calH$, we show that Haefliger invariant indeed behaves similarly to $v_2$.
To do this, we study $\calH(f)$, $f\in\emb{6k}{4k-1}$, in terms of generic projections $p\circ f\in\Imm{6k-1}{4k-1}$, where $p:\R^{6k}\to\R^{6k-1}$ denotes the projection forgetting the last $6k$-th coordinate.
A generic immersion $g\in\Imm{6k-1}{4k-1}$ has only (possibly empty) transverse two-fold self-intersection $A=A_1\sqcup\dotsb\sqcup A_m\subset\R^{6k-1}$, where each $A_i$ is a connected, closed oriented $(2k-1)$-dimensional manifold.
If $g=p\circ f$ for some $f\in\emb{6k}{4k-1}$, then $g:g^{-1}(A_i)\to A_i$ is a trivial double covering and we denote the inverse image by $g^{-1}(A_i)=L_i^0\sqcup L_i^1\subset\R^{4k-1}$.
We define the notion of the \emph{crossing changes} at the ``crossings'' $A_i$, and denote by $f_S\in\emb{6k}{4k-1}$ the embedding obtained from $f$ by crossing changes at $A_i$, $i\in S\subset\{1,\dotsc,m\}$.
In Theorem~\ref{thm:main1} we show that the difference $\calH(f)-\calH(f_S)$ can be described using the linking numbers $lk(L_i^{\epsilon},L_j^{\epsilon'})$, $\epsilon,\epsilon'=0,1$, $i,j=1,\dotsc,m$.
This formula is a higher dimensional analogue to those for the order two invariant $v_2$ \cite[(4.3)]{LinWang96}, \cite[(3.2)]{Ochiai01}, \cite[(2.6)]{Ochiai04}, \cite{PolyakViro01}.
As a corollary we see that Haefliger invariant is \emph{of order two} (Theorem~\ref{thm:order2}); this seems reasonable from the view of results in \cite{Munson05,Volic04} (see Remark~\ref{rem:def_finite_type_inv}).
In this sense Haefliger invariant can be seen as a higher dimensional analogue to $v_2$.
It seems that, in some aspects, geometric meaning of Haefliger invariant are understood better (see for example \cite{Haefliger62,Budney08,Milgram72,Takase04,Takase06}) than those of finite type invariants for classical knots, and more detailed studies on Haefliger invariant (and other invariants in higher dimensions which can be described by some integrals) might shed light on the geometric meaning of finite type invariants, perhaps in the context of characteristic classes.
As another consequence we reprove a result of Murai-Ohba \cite{MuraiOhba04} concerning the ``unknotting numbers'' of embeddings $\R^3\hookrightarrow\R^6$.

Similarly to $v_2$, the invariant $\calH$ is essentially the sum of two integrals $I(X),I(Y)$ over some configuration spaces, which correspond respectively to the graphs $X$ and $Y$ (see Figure~\ref{fig:H}).
The linking numbers of $L_i^{\epsilon}$'s in Theorem~\ref{thm:main1} arise from $I(X)$ and are thought of as a higher dimensional analogue to the Gauss diagram term in the formulas in \cite{LinWang96,Ochiai01,Ochiai04,PolyakViro01}.

In \cite{LinWang96} $I(Y)$ for $v_2$ was proved, by studying $v_2-I(X)$, to be a linear combination of Arnold invariants for generic plane curves \cite{Arnold63} and the number of crossings of knot diagrams.
One might expect that Ekholm's order one invariants \cite{Ekholm01,Ekholm01-2}, which look analogous to Arnold invariants, would appear in our higher dimensional cases.
We see in Theorem~\ref{thm:main2} that $I(Y)$ is essentially an invariant of generic and \emph{liftable} immersions $\R^{4k-1}\looparrowright\R^{6k-1}$, but in general it is not of order one.
The reason is that in a sense any link in $\R^3$ can be realized as the double point set of an immersion $\R^3\looparrowright\R^5$ \cite{Ogasa02}.

This paper is organized as follows.
In \S\ref{s:results} we fix the notations and state the results.
The main results are Theorems~\ref{thm:main1}, \ref{thm:order2} (proved in \S\ref{s:finite}) and \ref{thm:main2} (proved in \S\ref{s:gen_imm}).
In \S\ref{s:example} we show an explicit computation using Theorem~\ref{thm:main1}.
We review our construction of $\calH$ in \S\ref{s:def_H}.

\section*{Acknowledgments}
The author expresses his deep appreciation to Masamichi Takase and Tadayuki Watanabe for their many fruitful suggestions.
A question asked by Yuichi Yamada about $\calH$ motivated the author to start this work.
The author is partially supported by JSPS KAKENHI Grant number 25800038.

\section{Notations and results}\label{s:results}
The {\em self-intersection} $A$ of an immersion $g\colon M\looparrowright N$ is $A:=\{q\in N\mid\abs{g^{-1}(q)}\ge 2\}$, where $\abs{S}$ is the cardinality of a set $S$.
If $g\in\Imm{6k-1}{4k-1}$ is generic, then $\abs{g^{-1}(q)}=2$ for any $q\in A$.
Moreover, $A$ is a $(2k-1)$-dimensional closed submanifold and $g\colon g^{-1}(A)\to A$ is a double covering.
We call $g^{-1}(A)\subset\R^{4k-1}$ the {\em double point set}.
Suppose that $g$ is {\em liftable} to $f\in\emb{6k}{4k-1}$---namely $g=p\circ f$, where in general $p\colon\R^n\to\R^{n-1}$ is given by $p(x_1,\dotsc,x_n)=(x_1,\dotsc,x_{n-1})$---then $g\colon g^{-1}(A)\to A$ is a trivial double covering.
Let $A_i\subset\R^{6k-1}$ ($i=1,2,\dotsc$) be path components of $A$, and we call each $A_i$ a \emph{crossing} of (the ``knot diagram'' $p\circ f$ of) $f$.
We set $g^{-1}(A_i)=L_i^0\sqcup L_i^1=L_i^0(f)\sqcup L_i^1(f)$.
Each $L_i^{\epsilon}\subset\R^{4k-1}$ is a $(2k-1)$-dimensional connected closed submanifold.
By convention, $f(L_i^1)\subset\R^{6k}$ sits ``above'' $f(L_i^0)$---namely if $\bfx^{\epsilon}=(x_1,\dotsc,x_{6k-1},x_{6k}^{\epsilon})\in f(L_i^{\epsilon})$, $\epsilon=0,1$ (so $p(\bfx^0)=p(\bfx^1)$), then $x_{6k}^0<x_{6k}^1$.

\begin{remark}
Any $f\in\emb{6k}{4k-1}$ can be moved by an isotopy so that $p\circ f$ is a generic immersion; indeed such an isotopy exists for the embedding $\calS$ which generates $\pi_0(\emb{6k}{4k-1})$ (see \S\ref{s:example}).
But the condition for $f$ that $p\circ f$ is a generic immersion is not generic, and in general such an isotopy is not ``small.''
\end{remark}

\begin{remark}\label{rem:Takase}
Not all $g\in\Imm{6k-1}{4k-1}$ are regularly homotopic to any liftable immersion, in contrast to the case of plane curves.
Indeed, as shown in \cite[\S3]{Takase07}, $g\in\Imm{5}{3}$ is regularly homotopic to a liftable immersion if and only if its Smale invariant $\pi_0(\Imm{5}{3})\xrightarrow{\cong}\Z$ is even.
\end{remark}

\begin{lemma}[{\cite[Lemma~5.1.3]{Ekholm01}, \cite[Proposition~3.3]{Ekholm01-2}}]\label{lem:orientation}
For any $f\in\emb{6k}{4k-1}$ as above, the submanifolds $A_i\subset\R^{6k}$ and $L_i^{\epsilon}\subset\R^{4k-1}$ admit natural orientations.
\end{lemma}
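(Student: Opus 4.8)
The plan is to build the orientations pointwise on tangent spaces from the standard orientations of the source $\R^{4k-1}$ and the target $\R^{6k-1}$, together with the global ordering of the two sheets supplied by liftability, and then to observe that the construction is continuous, so that—each $A_i$ and each $L_i^{\epsilon}$ being connected—it descends to a genuine orientation of the manifold. Write $g=p\circ f\colon M:=\R^{4k-1}\looparrowright N:=\R^{6k-1}$, fix a crossing $A_i$ and a point $q\in A_i$ with the two preimages $\bfx^0\in L_i^0$ and $\bfx^1\in L_i^1$, and set $V_{\epsilon}:=dg(T_{\bfx^{\epsilon}}M)\subset T_qN$. Genericity of $g$ means precisely that the two $(4k-1)$-dimensional sheets are transverse, so that
\[
V_0+V_1=T_qN,\qquad V_0\cap V_1=T_qA_i ,
\]
the latter of dimension $2k-1$. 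The crucial point about liftability is that the ordering $L_i^0,L_i^1$ is not merely some local choice but is canonical and global: it is recorded by the last coordinate, $x_{6k}^0<x_{6k}^1$, so there is no monodromy to worry about.

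First I would coorient each image sheet. Since $dg$ restricts to an isomorphism $T_{\bfx^{\epsilon}}M\xrightarrow{\cong}V_{\epsilon}$ and $M$ is oriented, $V_{\epsilon}$ is oriented; together with the orientation of $N$ this orients the quotient $T_qN/V_{\epsilon}$, giving a coorientation $\mu_{\epsilon}$ of the $\epsilon$-th sheet. Transversality yields canonical isomorphisms $V_0/T_qA_i\cong T_qN/V_1$ and $V_1/T_qA_i\cong T_qN/V_0$, so $\mu_1$ orients $V_0/T_qA_i$ and $\mu_0$ orients $V_1/T_qA_i$. To orient $A_i$ I would use the internal direct sum $T_qN/T_qA_i=(V_0/T_qA_i)\oplus(V_1/T_qA_i)$ of its normal space in $N$, orienting the two summands by $\mu_1$ and $\mu_0$; combined with the orientation of $N$ this orients $T_qA_i$. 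Since each summand has the even dimension $2k$, interchanging the two sheets multiplies the normal orientation by $(-1)^{(2k)^2}=+1$, so the orientation of $A_i$ is in fact independent of the labeling. To orient $L_i^{\epsilon}$ I would instead orient its normal bundle $\nu^{\epsilon}:=T_{\bfx^{\epsilon}}M/T_{\bfx^{\epsilon}}L_i^{\epsilon}$, which $dg$ identifies with $V_{\epsilon}/T_qA_i$; I orient $\nu^0$ by $\mu_1$ and $\nu^1$ by $\mu_0$ (equivalently, I pull back the coorientation of the opposite image sheet), and then use the orientation of $M$ via $T_{\bfx^{\epsilon}}L_i^{\epsilon}\oplus\nu^{\epsilon}=T_{\bfx^{\epsilon}}M$. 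Here the global labeling—hence the liftability hypothesis making $g^{-1}(A_i)\to A_i$ a trivially split double cover—is genuinely needed.

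The remaining verifications are routine. The whole construction depends only on the continuously varying data $(V_0,V_1,T_qN)$ and on the canonical sheet labeling, so it is continuous in $q$, and connectedness of $A_i$ and $L_i^{\epsilon}$ promotes the pointwise orientations to global ones. The one place demanding care—and the expected main obstacle—is the bookkeeping of signs: I must fix once and for all the ordering conventions appearing in the normal orientation $\mu_1\wedge\mu_0$ and in $T L_i^{\epsilon}\oplus\nu^{\epsilon}=TM$, and check that the resulting orientations agree with those of Lemma~\ref{lem:orientation}'s cited sources, since it is precisely these conventions that will feed into the linking-number terms $lk(L_i^{\epsilon},L_j^{\epsilon'})$ of Theorem~\ref{thm:main1}. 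Beyond this bookkeeping I anticipate no essential difficulty, the transversality and the trivial double covering already furnishing all the linear-algebraic data required.
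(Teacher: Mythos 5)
Your proposal is correct and is essentially the paper's argument in dual form: the paper orients $A_i$ by completing a frame $\vec{u}$ of $T_xA_i$ to positive tangent frames $(\vec{u},\vec{v})$ and $(\vec{u},\vec{w})$ of the two sheets and declaring $\vec{u}$ positive when $(\vec{u},\vec{v},\vec{w})$ is positive in $\R^{6k-1}$, with well-definedness coming from the same even-codimension parity $(-1)^{(2k)^2}=+1$ that you invoke. The only cosmetic difference is for $L_i^{\epsilon}$, where the paper simply requires $p\circ f\colon L_i^{\epsilon}\to A_i$ to preserve the orientation just constructed on $A_i$; this agrees with your normal-bundle recipe up to the ordering conventions you already flag as needing to be pinned down.
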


\begin{proof}
Given a basis $\vec{u}=(\bfu_1,\dotsc,\bfu_{2k-1})$ of $T_xA_i$ ($x\in A_i$), we can choose tangent frames $\vec{v}=(\bfv_{2k},\dotsc,\bfv_{4k-1})$ and $\vec{w}=(\bfw_{2k},\dotsc,\bfw_{4k-1})$ of the two sheets of $p\circ f$ meeting at $x\in A_i$ so that $(\vec{u},\vec{v})$ and $(\vec{u},\vec{w})$ are the positive bases of these two sheets.
We say $\vec{u}$ represents the positive orientation of $A_i$ if $(\vec{u},\vec{v},\vec{w})$ is a positive basis of $\R^{6k-1}$.
Since the codimension of $p\circ f$ is even, this definition is independent of the order of the two sheets.
We orient $L_i^{\epsilon}$, $\epsilon=0,1$, so that $p\circ f\colon L_i^{\epsilon}\to A_i$ preserves the orientation.
\end{proof}

To simplify the computations, we often move $f\in\emb{6k}{4k-1}$ to a special position.

\begin{definition}[see Figure~\ref{fig:special_position}]\label{def:special_position}
We say an embedding $f\in\emb{6k}{4k-1}$ is {\em almost planar} if
\begin{enumerate}[(i)]
\item
 the composite $p\circ f\colon \R^{4k-1}\looparrowright\R^{6k-1}$ is a generic immersion,
\item
 $f(\R^{4k-1})\subset\R^{6k-1}\times[0,\delta]$ for a small $\delta>0$, and
\item
 $f(\R^{4k-1}\setminus\bigcup_iN(L_i^1))\subset\R^{6k-1}\times\{0\}$, where $N(L_i^{\epsilon})\subset\R^{4k-1}$ are closed tubular neighborhoods of $L_i^{\epsilon}$ in $\R^{4k-1}$ such that $N(L_i^{\epsilon})\cap N(L_j^{\epsilon'})=\emptyset$ if $(i,\epsilon)\ne(j,\epsilon')$.
\end{enumerate}
\end{definition}
\begin{figure}[htb]
\centering
\unitlength 0.1in
\begin{picture}( 36.5000,  8.0000)(  6.0000,-14.0000)
%
{\color[named]{Black}{%
\special{pn 8}%
\special{pa 4200 1400}%
\special{pa 4200 1100}%
\special{fp}%
\special{sh 1}%
\special{pa 4200 1100}%
\special{pa 4180 1168}%
\special{pa 4200 1154}%
\special{pa 4220 1168}%
\special{pa 4200 1100}%
\special{fp}%
}}%
\put(42.5000,-11.0000){\makebox(0,0)[lt]{$x_{6k}$}}%
%
{\color[named]{Black}{%
\special{pn 8}%
\special{pa 600 800}%
\special{pa 2080 800}%
\special{fp}%
}}%
%
{\color[named]{Black}{%
\special{pn 8}%
\special{pa 600 800}%
\special{pa 1200 1400}%
\special{fp}%
}}%
%
{\color[named]{Black}{%
\special{pn 8}%
\special{pa 3600 800}%
\special{pa 4200 1400}%
\special{fp}%
}}%
%
{\color[named]{Black}{%
\special{pn 8}%
\special{pa 1200 1400}%
\special{pa 4200 1400}%
\special{fp}%
}}%
%
{\color[named]{Black}{%
\special{pn 13}%
\special{pa 2800 1300}%
\special{pa 2040 920}%
\special{fp}%
}}%
%
{\color[named]{Black}{%
\special{pn 13}%
\special{pa 1960 880}%
\special{pa 1860 820}%
\special{fp}%
}}%
%
{\color[named]{Black}{%
\special{pn 8}%
\special{pa 2200 800}%
\special{pa 2600 800}%
\special{fp}%
}}%
%
{\color[named]{Black}{%
\special{pn 8}%
\special{pa 2750 800}%
\special{pa 3600 800}%
\special{fp}%
}}%
%
{\color[named]{Black}{%
\special{pn 8}%
\special{pa 2400 700}%
\special{pa 2400 1100}%
\special{dt 0.045}%
}}%
\put(24.0000,-7.0000){\makebox(0,0){$\times$}}%
\put(28.7000,-6.3000){\makebox(0,0)[lt]{$f(L_i^1)$}}%
\put(22.0000,-12.3000){\makebox(0,0)[rt]{$A_i=f(L_i^0)$}}%
\put(36.5000,-13.0000){\makebox(0,0){$\R^{6k-1}$}}%
\put(16.5000,-10.8000){\makebox(0,0)[rb]{$f(\R^{4k-1})$}}%
%
{\color[named]{Black}{%
\special{pn 4}%
\special{pa 2200 1250}%
\special{pa 2400 1100}%
\special{fp}%
}}%
\put(24.0000,-11.0000){\makebox(0,0){$\bullet$}}%
{\color[named]{Black}{%
\special{pn 13}%
\special{pa 1600 1100}%
\special{pa 1630 1100}%
\special{pa 1636 1098}%
\special{pa 1640 1098}%
\special{pa 1660 1094}%
\special{pa 1666 1094}%
\special{pa 1670 1092}%
\special{pa 1690 1088}%
\special{pa 1696 1086}%
\special{pa 1700 1086}%
\special{pa 1706 1084}%
\special{pa 1710 1082}%
\special{pa 1730 1074}%
\special{pa 1736 1074}%
\special{pa 1740 1072}%
\special{pa 1746 1068}%
\special{pa 1760 1062}%
\special{pa 1766 1060}%
\special{pa 1776 1056}%
\special{pa 1780 1052}%
\special{pa 1786 1050}%
\special{pa 1800 1042}%
\special{pa 1806 1040}%
\special{pa 1840 1018}%
\special{pa 1846 1014}%
\special{pa 1856 1008}%
\special{pa 1860 1004}%
\special{pa 1870 998}%
\special{pa 1876 994}%
\special{pa 1880 992}%
\special{pa 1886 988}%
\special{pa 1890 984}%
\special{pa 1896 980}%
\special{pa 1900 978}%
\special{pa 1910 970}%
\special{pa 1916 966}%
\special{pa 1936 950}%
\special{pa 1940 948}%
\special{pa 1966 928}%
\special{pa 1970 924}%
\special{pa 2030 876}%
\special{pa 2036 874}%
\special{pa 2060 854}%
\special{pa 2066 850}%
\special{pa 2086 834}%
\special{pa 2090 832}%
\special{pa 2100 824}%
\special{pa 2106 820}%
\special{pa 2110 816}%
\special{pa 2116 814}%
\special{pa 2120 810}%
\special{pa 2126 806}%
\special{pa 2130 802}%
\special{pa 2140 796}%
\special{pa 2146 792}%
\special{pa 2156 786}%
\special{pa 2160 782}%
\special{pa 2196 762}%
\special{pa 2200 760}%
\special{pa 2216 750}%
\special{pa 2220 748}%
\special{pa 2226 746}%
\special{pa 2236 742}%
\special{pa 2240 738}%
\special{pa 2256 732}%
\special{pa 2260 730}%
\special{pa 2266 728}%
\special{pa 2270 726}%
\special{pa 2290 718}%
\special{pa 2296 718}%
\special{pa 2300 716}%
\special{pa 2306 714}%
\special{pa 2310 712}%
\special{pa 2330 708}%
\special{pa 2336 706}%
\special{pa 2340 706}%
\special{pa 2360 702}%
\special{pa 2380 702}%
\special{pa 2386 700}%
\special{pa 2400 700}%
\special{fp}%
}}%
{\color[named]{Black}{%
\special{pn 13}%
\special{pa 2400 700}%
\special{pa 2416 700}%
\special{pa 2420 702}%
\special{pa 2440 702}%
\special{pa 2460 706}%
\special{pa 2466 706}%
\special{pa 2470 708}%
\special{pa 2490 712}%
\special{pa 2496 714}%
\special{pa 2500 716}%
\special{pa 2506 718}%
\special{pa 2510 718}%
\special{pa 2530 726}%
\special{pa 2536 728}%
\special{pa 2540 730}%
\special{pa 2546 732}%
\special{pa 2560 738}%
\special{pa 2566 742}%
\special{pa 2576 746}%
\special{pa 2580 748}%
\special{pa 2586 750}%
\special{pa 2600 760}%
\special{pa 2606 762}%
\special{pa 2640 782}%
\special{pa 2646 786}%
\special{pa 2656 792}%
\special{pa 2660 796}%
\special{pa 2670 802}%
\special{pa 2676 806}%
\special{pa 2680 810}%
\special{pa 2686 814}%
\special{pa 2690 816}%
\special{pa 2696 820}%
\special{pa 2700 824}%
\special{pa 2710 832}%
\special{pa 2716 834}%
\special{pa 2736 850}%
\special{pa 2740 854}%
\special{pa 2766 874}%
\special{pa 2770 876}%
\special{pa 2830 924}%
\special{pa 2836 928}%
\special{pa 2860 948}%
\special{pa 2866 950}%
\special{pa 2886 966}%
\special{pa 2890 970}%
\special{pa 2900 978}%
\special{pa 2906 980}%
\special{pa 2910 984}%
\special{pa 2916 988}%
\special{pa 2920 992}%
\special{pa 2926 994}%
\special{pa 2930 998}%
\special{pa 2940 1004}%
\special{pa 2946 1008}%
\special{pa 2956 1014}%
\special{pa 2960 1018}%
\special{pa 2996 1040}%
\special{pa 3020 1052}%
\special{pa 3026 1056}%
\special{pa 3036 1060}%
\special{pa 3040 1062}%
\special{pa 3056 1068}%
\special{pa 3060 1072}%
\special{pa 3066 1074}%
\special{pa 3070 1074}%
\special{pa 3090 1082}%
\special{pa 3096 1084}%
\special{pa 3100 1086}%
\special{pa 3106 1086}%
\special{pa 3110 1088}%
\special{pa 3130 1092}%
\special{pa 3136 1094}%
\special{pa 3140 1094}%
\special{pa 3160 1098}%
\special{pa 3166 1098}%
\special{pa 3170 1100}%
\special{pa 3200 1100}%
\special{fp}%
}}%
%
{\color[named]{Black}{%
\special{pn 13}%
\special{pa 1400 1100}%
\special{pa 1610 1100}%
\special{fp}%
}}%
%
{\color[named]{Black}{%
\special{pn 13}%
\special{pa 3190 1100}%
\special{pa 3400 1100}%
\special{fp}%
}}%
%
{\color[named]{Black}{%
\special{pn 4}%
\special{ar 2650 750 250 150  3.7305178 5.6942602}%
}}%
\end{picture}%
\caption{An almost planar embedding}
\label{fig:special_position}
\end{figure}
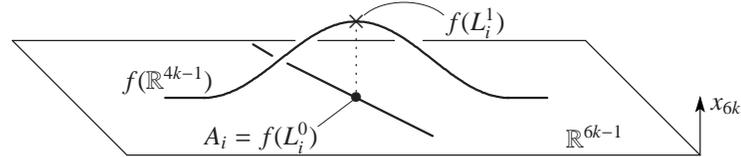
If $f\in\emb{6k}{4k-1}$ is such that $p\circ f\in\Imm{6k-1}{4k-1}$ is generic, then we can transform $f$ to be almost planar without changing $p\circ f$, by an isotopy in the $x_{6k}$-direction.
Notice that if $f$ is almost planar, then the crossings $A_i$ are equal to $f(L_i^0)$.

Suppose that $f$ is almost planar and that the self-intersection of $p\circ f$ has $m$ components.
For $S\subset\{1,\dotsc,m\}$, let $f_S\in\emb{6k}{4k-1}$ be defined by
\[
 f_S(x):=
 \begin{cases}
  \iota(f(x)) & x\in N(L_i^1),\ i\in S,\\
  f(x)        & \text{otherwise},
 \end{cases}
\]
where $\iota\colon \R^{6k}\to\R^{6k}$ is given by $\iota(x_1,\dotsc,x_{6k})=(x_1,\dotsc,x_{6k-1},-x_{6k})$.
We say $f_S$ is an embedding obtained from $f$ by {\em crossing changes} at the crossings $\{A_i\}_{i\in S}$.
Notice that $p\circ f=p\circ f_S$
and
\[
 L_i^{\epsilon}(f_S)=
 \begin{cases}
  L_i^{\epsilon+1}(f)	& i\in S, \\
  L_i^{\epsilon}(f)	& i\not\in S;
 \end{cases}
\]
here $\epsilon$ is understood to be in $\Z/2=\{0,1\}$ and $1+1=0$.

Let $\calH\colon \emb{6k}{4k-1}\to\Z$ be the Haefliger invariant (see \S\ref{s:def_H} for our construction).
Our main theorem describes the difference $\calH(f)-\calH(f_S)$ using the linking numbers of $L_i^{\epsilon}$'s.

\begin{theorem}\label{thm:main1}
Let $f\in\emb{6k}{4k-1}$ be such that $p\circ f$ is a generic immersion and has the nonempty self-intersection $A=A_1\sqcup\dotsb\sqcup A_m$.
Then for any $S\subset\{1,\dotsc,m\}$,
\begin{align}
 &\calH(f)-\calH(f_S)\notag\\
 &\ =\frac{1}{4}\Bigl(\sum_{(i,\epsilon)<(j,\epsilon')}(-1)^{\epsilon+\epsilon'}lk(L^{\epsilon}_i(f),L^{\epsilon'}_j(f))
 -\sum_{(i,\epsilon)<(j,\epsilon')}(-1)^{\epsilon+\epsilon'}lk(L^{\epsilon}_i(f_S),L^{\epsilon'}_j(f_S))\Bigr)\label{eq:main1}\\
 &\ =\frac{1}{2}\sum_{\genfrac{}{}{0pt}{1}{(i,\epsilon)<(j,\epsilon'),}{(\text{exactly one of }i,j)\in S}}(-1)^{\epsilon+\epsilon'}lk(L^{\epsilon}_i(f),L^{\epsilon'}_j(f)),\label{eq:main2}
\end{align}
where $lk$ stands for the linking number, and we write $(i,\epsilon)<(j,\epsilon')$ if $i<j$ or if $i=j$, $\epsilon=0$, $\epsilon'=1$.
\end{theorem}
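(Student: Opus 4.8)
The plan is to start from the integral expression $\calH=I(X)+I(Y)+c$ recalled in \S\ref{s:def_H} and to isolate the single piece that is sensitive to the vertical ordering of the two sheets at a crossing. First I would invoke Definition~\ref{def:special_position}: an isotopy in the $x_{6k}$-direction makes $f$ almost planar without altering $p\circ f$, the double point sets $L_i^\epsilon\subset\R^{4k-1}$, their orientations (Lemma~\ref{lem:orientation}), the vertical ordering of the sheets, or the values of $\calH$ and of all $lk(L_i^\epsilon,L_j^{\epsilon'})$. Since $p\circ f=p\circ f_S$ and $L_i^\epsilon(f_S)=L_i^{\epsilon+s_i}(f)$, where $s_i=1$ for $i\in S$ and $s_i=0$ otherwise, I may treat $f$ and $f_S$ simultaneously in this position.

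The core of the proof is the claim that the combination $\calH(f)-\tfrac14\Sigma(f)$, where $\Sigma(f):=\sum_{(i,\epsilon)<(j,\epsilon')}(-1)^{\epsilon+\epsilon'}lk(L^{\epsilon}_i(f),L^{\epsilon'}_j(f))$, depends only on the projected immersion $p\circ f$. Granting this, $p\circ f=p\circ f_S$ forces $\calH(f)-\tfrac14\Sigma(f)=\calH(f_S)-\tfrac14\Sigma(f_S)$, which after rearrangement is exactly \eqref{eq:main1}. I would prove the claim by treating the three summands of $\calH$ separately. For $I(Y)$ and $c$ I would show that their integrands are insensitive to the vertical ordering of the sheets, the double point locus entering only through a removable hidden face, so that $I(Y)(f)=I(Y)(f_S)$ and $c(f)=c(f_S)$; in fact these two terms depend only on $p\circ f$, which is the invariance underlying Theorem~\ref{thm:main2}.

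The essential computation is the evaluation of $I(X)$ for an almost planar $f$. Its integrand is a product of two Gauss-type volume forms, one for each chord of $X$, and in the almost planar position the corresponding directions become vertical, $\pm e_{6k}$, precisely over the self-intersection of $p\circ f$. I would argue that on the compactified configuration space the integral localizes to the locus where the two chords lie over two crossings $A_i,A_j$, and that the contribution of such a pair is $\pm lk(L_i^\epsilon,L_j^{\epsilon'})$, the sign $(-1)^{\epsilon+\epsilon'}$ being fixed by the orientation convention of Lemma~\ref{lem:orientation} together with the vertical direction chosen for each chord. Summing over pairs and accounting for the two vertical orientations of each chord yields $I(X)(f)=\tfrac14\Sigma(f)+C(f)$ with $C(f)$ depending only on $p\circ f$, which completes the proof of the claim and hence of \eqref{eq:main1}.

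Finally \eqref{eq:main2} is the combinatorial reduction of \eqref{eq:main1}. Substituting $L_i^\epsilon(f_S)=L_i^{\epsilon+s_i}(f)$ and re-indexing shows that each block $i<j$ of $\Sigma(f_S)$ equals $(-1)^{s_i+s_j}$ times the corresponding block of $\Sigma(f)$, while the diagonal terms $lk(L_i^0,L_i^1)$ are unchanged since the linking number of two $(2k-1)$-cycles in $\R^{4k-1}$ is symmetric in these even dimensions. Thus the blocks with $i,j$ both in or both out of $S$ cancel in $\Sigma(f)-\Sigma(f_S)$, those with exactly one of $i,j$ in $S$ double, and dividing by $4$ produces the factor $\tfrac12$ and the index condition of \eqref{eq:main2}. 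I expect the main obstacle to be the analytic control of $I(X)$: ruling out spurious boundary faces of the compactified configuration space, identifying each surviving local degree with a genuine linking number in $\R^{4k-1}$, and matching the signs $(-1)^{\epsilon+\epsilon'}$ and the constant $\tfrac14$ against the orientation conventions; the invariance of $I(Y)$ and $c$ is conceptually the same difficulty seen from the $Y$-graph.
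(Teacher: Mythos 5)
Your proposal follows essentially the same route as the paper: put $f$ in almost planar position, localize $I(X)$ (with volume forms concentrated at the poles) to configurations lying over the crossings so that the local degrees assemble into $\frac{1}{2}\sum(-1)^{\epsilon+\epsilon'}lk(L_i^{\epsilon},L_j^{\epsilon'})$ plus writhe terms unchanged by crossing changes, show that $I(Y)$ and $c$ contribute nothing to the difference, and then deduce \eqref{eq:main2} from \eqref{eq:main1} by the same cancellation pattern. The only refinement the paper adds is that the equalities for $I(Y)$ and $c$ (and the localization of $I(X)$) are established in the limit $\delta\to 0$ of a vertical rescaling rather than for the almost planar $f$ itself, which is how the statement that these terms ``depend only on $p\circ f$'' is made precise.
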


\eqref{eq:main2} follows from \eqref{eq:main1};
if $i,j\not\in S$, then $lk(L_i^{\epsilon}(f),L_j^{\epsilon'}(f))=lk(L_i^{\epsilon}(f_S),L_j^{\epsilon'}(f_S))$ is contained in both sums in \eqref{eq:main1} with the same sign $(-1)^{\epsilon+\epsilon'}$ and cancels out.
If $i,j\in S$, then $lk(L_i^{\epsilon}(f),L_j^{\epsilon'}(f))=lk(L_i^{\epsilon+1}(f_S),L_j^{\epsilon'+1}(f_S))$ is contained in both sums with the same sign $(-1)^{\epsilon+\epsilon'}$ and cancels out.
If $i\in S$ and $j\not\in S$, then $lk(L_i^{\epsilon}(f),L_j^{\epsilon'}(f))$ is contained in the first sum with sign $(-1)^{\epsilon+\epsilon'}$ while $lk(L_i^{\epsilon}(f),L_j^{\epsilon'}(f))=lk(L_i^{\epsilon+1}(f_S),L_j^{\epsilon'}(f_S))$ is contained in the second sum with the opposite sign $(-1)^{\epsilon+1+\epsilon'}$.

\begin{remark}\label{rem:LinWang}
The formula in Theorem~\ref{thm:main1} is similar to Lin and Wang's formula for the Casson invariant $v_2$ \cite{LinWang96};
given a diagram of $f\in\emb{3}{1}$, let $f_S\in\emb{3}{1}$ be obtained by changing the crossings $c_i$, $i\in S$.
Then a slight generalization of \cite[(4.3)]{LinWang96} can be written as
\begin{equation}\label{eq:LinWang_original}
 v_2(f)-v_2(f_S)
	=\frac{1}{4}\Bigl\langle\raisebox{-0.2\height}{\includegraphics[scale=0.7]{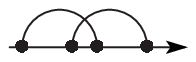}},G(f)\Bigr\rangle
	-\frac{1}{4}\Bigl\langle\raisebox{-0.2\height}{\includegraphics[scale=0.7]{X_graph_unsigned.eps}},G(f_S)\Bigr\rangle,
\end{equation}
where $G(f)$ is the \emph{Gauss diagram} of (the diagram of) $f$ and
$\Bigl\langle\raisebox{-0.2\height}{\includegraphics[scale=0.7]{X_graph_unsigned.eps}},G(f)\Bigr\rangle$
is the sum of $\varepsilon_1\varepsilon_2$ for all the subdiagrams of $G(f)$ of the shape \raisebox{-0.2\height}{\includegraphics{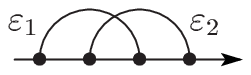}} ($\varepsilon_i=\pm 1$ are the signs of the corresponding crossings of the diagrams).
This kind of pairing appears elsewhere, for example, in \cite{PolyakViro01}.
Choosing $S$ that yields a ``descending diagram'' $f_S$ (and hence $f_S$ is trivial), and computing the right-hand side of \eqref{eq:LinWang_original}, we reprove the Polyak--Viro formula \cite[Theorem~1.A]{PolyakViro01}.
Regarding the pairing as the sum of the linking numbers of ``$0$-dimensional Hopf links $S^0\sqcup S^0\hookrightarrow\R^1$'' determined by the crossings of the diagrams, we can say that Theorem~\ref{thm:main1} is a higher-dimensional analogue to \eqref{eq:LinWang_original} and to the Polyak--Viro formula.
\end{remark}

Theorem~\ref{thm:main1} together with the result of Ogasa \cite{Ogasa02} gives an alternative proof for the following result of Murai--Ohba \cite{MuraiOhba04}, which states that the ``unknotting number'' of any nontrivial embedding $f\in\emb{6}{3}$ is $1$ (see \S\ref{ss:k=1} for the proof).

\begin{corollary}[{\cite{MuraiOhba04}}]\label{cor:Ohba}
Any nontrivial $f\in\emb{6}{3}$ can be unknotted by a crossing change at a single crossing.
Namely, $f$ is isotopic to some $f'$, with $f'_{\{1\}}$ isotopic to the trivial inclusion.
\end{corollary}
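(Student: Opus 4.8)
The plan is to exploit the great flexibility of the single-crossing change afforded by \eqref{eq:main2}: in the classical case a crossing change alters $v_2$ only by a bounded amount, whereas here a single crossing change can shift $\calH$ by \emph{any} prescribed integer, because the right-hand side of \eqref{eq:main2} is built from linking numbers of circles in $\R^3$, which can be made arbitrarily large. Recall first that for $k=1$ Haefliger's theorem makes $\calH\colon\pi_0(\emb{6}{3})\xrightarrow{\cong}\Z$, so $f$ is trivial precisely when $\calH(f)=0$, and $f$ is isotopic to $f'$ precisely when $\calH(f)=\calH(f')$. Thus, writing $\calH(f)=n\ne 0$, it suffices to produce a representative $f'$ isotopic to $f$ carrying a crossing $A_1$ for which a single crossing change kills the invariant, i.e.\ with $\calH(f')-\calH(f'_{\{1\}})=n$; then $\calH(f'_{\{1\}})=0$ and $f'_{\{1\}}$ is trivial.

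First I would isotope $f$ so that $p\circ f$ is generic and almost planar (both possible after an isotopy, as discussed in \S\ref{s:results}). Next, inside a ball $B\subset\R^3$ lying in the region where $f$ is the standard inclusion—such a region exists since $f$ is standard outside a compact set, and there $f(B)$ is a flat sheet—I would fold the embedding to create two new crossings $A_1,A_2$, relabelling so that the one to be changed is $A_1$. The point is to prescribe the double point circles $L_1^0,L_1^1,L_2^0,L_2^1\subset B$ so that
\[
 lk(L_1^0,L_2^0)=lk(L_1^1,L_2^1)=n,\qquad lk(L_1^0,L_2^1)=lk(L_1^1,L_2^0)=0,
\]
while each $L_1^\epsilon$ is unlinked from every old double point circle (automatic, as those lie in the compact part of the diagram, outside $B$). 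That such a four-component link can indeed occur as the double point set of a \emph{liftable} immersion, realized by a local fold of the trivial embedding, is exactly the content of Ogasa's theorem \cite{Ogasa02}; since the modification is supported in $B$ where the embedding is unknotted, the global isotopy class is unchanged, so $\calH(f')=\calH(f)=n$.

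Finally I would apply Theorem~\ref{thm:main1} with $S=\{1\}$. In \eqref{eq:main2} the only pairs with exactly one index in $S$ are $(1,\epsilon)<(2,\epsilon')$ together with the pairs $(1,\epsilon)<(j,\epsilon')$ coming from the old crossings $A_j$; the latter vanish by the unlinking, so
\[
 \calH(f')-\calH(f'_{\{1\}})
  =\tfrac12\bigl(lk(L_1^0,L_2^0)+lk(L_1^1,L_2^1)\bigr)=\tfrac12(n+n)=n,
\]
whence $\calH(f'_{\{1\}})=0$ and $f'_{\{1\}}$ is trivial; as $f$ is nontrivial it cannot already be trivial, so the unknotting number is exactly $1$. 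I expect the genuine obstacle to be the realization step: one must arrange an \emph{arbitrary} linking pattern among the four circles produced by a single fold while keeping the lift trivial and supported in a ball, and must do so compatibly with the orientation conventions of Lemma~\ref{lem:orientation} and the signs $(-1)^{\epsilon+\epsilon'}$ in \eqref{eq:main2} (if those conventions produce the opposite sign one simply takes the mirror fold, prescribing linking $-n$ instead). This is precisely where Ogasa's construction is indispensable, and checking that it can be carried out locally, without disturbing the rest of the diagram, is the heart of the argument.
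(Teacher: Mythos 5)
Your proposal is correct and follows essentially the same route as the paper: realize, via Ogasa's theorem, a liftable local modification supported where the embedding is standard whose two crossings $A_1,A_2$ have double point circles with $lk(K_1^0,K_2^0)=lk(K_1^1,K_2^1)=lk(K_1,K_2)$ and vanishing cross-terms, then apply \eqref{eq:main2} with $S=\{1\}$ to shift $\calH$ by exactly $lk(K_1,K_2)$. The only cosmetic difference is that the paper realizes your ``fold'' concretely as the connected sum $f_0\sharp g^0\sharp g^1$ with the two Ogasa $3$-spheres (lifting the $g^1$-lobe into $\R^5\times\R_+$), which automatically produces precisely the linking pattern you postulate, with the cross-terms vanishing because $K_i^0$ and $K_j^1$ lie in separated lobes.
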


The proof is outlined as follows:
Given any two-component link $L\subset\R^3$, by a result of Ogasa \cite{Ogasa02} we can find $f_0\colon \R^3\hookrightarrow\R^6$ such that it is isotopic to the trivial inclusion and its projection $p\circ f\colon \R^3\looparrowright\R^5$ has $L$ as its double point set.
By Theorem~\ref{thm:main1} the embedding $f_1$ obtained by the crossing change along $L$ satisfies $\calH(f_1)=-lk(L)$.
This means that any $f\in\emb{6}{3}$ with arbitrary $\calH(f)$ can be obtained by a single crossing change from $f_0$.
See \S\ref{ss:k=1} for details.
In \cite{MuraiOhba04} an explicit way to unknot the generator of $\pi_0(\emb{6}{3})$ (and its connected sums) by a single crossing change is given.

Below we introduce the notion of {\em finite type invariants} for $\emb{6k}{4k-1}$.
As a consequence of Theorem~\ref{thm:main1}, we prove in \S\ref{s:finite} that $\calH$ is an invariant of order $2$.

\begin{definition}\label{def:finite_type_invariant}
Let $u\colon\emb{6k}{4k-1}\to\R$ be a function, and let $A=\{A_i\}_{1\le i\le s+1}$ be a (sub)set of crossings of $p\circ f$, where $f\in\emb{6k}{4k-1}$ is almost planar.
Define
\[
 V_{s+1}(u)(f):=\sum_{S\subset\{1,\dotsc,s+1\}}(-1)^{\abs{S}}u(f_S).
\]
An isotopy invariant $u$ is said to be {\em of order $s$} if $V_{s+1}(u)=0$.
\end{definition}

\begin{theorem}\label{thm:order2}
The Haefliger invariant is of order 2.
\end{theorem}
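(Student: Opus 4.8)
The plan is to deduce the vanishing of $V_3(\calH)$ directly from the explicit crossing‑change formula \eqref{eq:main2} of Theorem~\ref{thm:main1}, treating the statement as a purely formal consequence. Fix an almost planar $f\in\emb{6k}{4k-1}$ together with a chosen triple of crossings $A_1,A_2,A_3$; by Definition~\ref{def:finite_type_invariant} I must show
\[
V_3(\calH)(f)=\sum_{S\subset\{1,2,3\}}(-1)^{\abs{S}}\calH(f_S)=0.
\]
The feature of \eqref{eq:main2} that I would exploit is that \emph{every} linking number appearing there is evaluated at the fixed embedding $f$, so the only dependence on $S$ sits in the index set of the summation.

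First I would record, for $1\le i<j\le 3$, the $S$-independent constant
\[
c_{ij}:=\tfrac{1}{2}\sum_{\epsilon,\epsilon'}(-1)^{\epsilon+\epsilon'}lk(L_i^{\epsilon}(f),L_j^{\epsilon'}(f)),
\]
and rewrite \eqref{eq:main2} as $\calH(f)-\calH(f_S)=\sum_{i<j}c_{ij}\,\chi_{ij}(S)$, where $\chi_{ij}(S)=1$ when exactly one of $i,j$ lies in $S$ and $\chi_{ij}(S)=0$ otherwise. (Here I use that the condition ``exactly one of $i,j$ lies in $S$'' forces $i\ne j$, so only off‑diagonal pairs survive and for each such pair all four sign combinations $(\epsilon,\epsilon')$ occur.) Substituting $\calH(f_S)=\calH(f)-\sum_{i<j}c_{ij}\chi_{ij}(S)$ into the definition of $V_3$ splits the computation into two pieces: the constant piece contributes $\calH(f)\sum_{S}(-1)^{\abs{S}}=\calH(f)\,(1-1)^3=0$, and interchanging the two summations in the remaining piece gives
\[
V_3(\calH)(f)=-\sum_{1\le i<j\le 3}c_{ij}\sum_{S\subset\{1,2,3\}}(-1)^{\abs{S}}\chi_{ij}(S).
\]

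The key step is then the observation that each inner sum vanishes. For a fixed pair $i<j$ with third index $k$, the subsets $S$ with $\chi_{ij}(S)=1$ are exactly $\{i\},\{j\},\{i,k\},\{j,k\}$, whose signs $(-1)^{\abs{S}}$ are $-1,-1,+1,+1$ and add to $0$. The conceptual reason I would emphasize is the following: writing $x_i=1$ if $i\in S$ and $x_i=0$ otherwise, one has $\chi_{ij}(S)=x_i+x_j-2x_ix_j$, so $\calH(f_S)$ is a polynomial of degree $\le 2$ in $(x_1,x_2,x_3)$; since $V_3$ is, up to sign, the threefold iterated finite difference $\Delta_1\Delta_2\Delta_3$, and each monomial of degree $\le 2$ in three variables necessarily omits at least one variable and is therefore annihilated by the corresponding $\Delta_i$, the operator $V_3$ kills every such polynomial. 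Either way $V_3(\calH)(f)=0$, and as $f$ and the triple of crossings were arbitrary this shows $\calH$ is of order $2$.

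I do not expect a serious obstacle once Theorem~\ref{thm:main1} is in hand: the entire content is that the crossing‑change difference is a pairwise, hence ``quadratic,'' quantity in the chosen crossings, and an order‑three alternating sum annihilates quadratic data. The one point genuinely requiring care is the one highlighted above—that the linking numbers in \eqref{eq:main2} are taken at $f$ rather than at $f_S$—since this is precisely what guarantees that the $c_{ij}$ are honest constants and legitimizes pulling them outside the alternating sum. I would also note explicitly that the argument presupposes at least three crossings, which is exactly the setting in which $V_3$ is defined.
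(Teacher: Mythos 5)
Your argument is correct and is essentially the paper's proof in a different bookkeeping: the paper organizes $V_3(\calH)(f)$ as the alternating sum over $T\subset\{2,3\}$ of the single-crossing differences $W_T(\calH)(f)=\calH(f_T)-\calH(f_{T\cup\{1\}})$, applies \eqref{eq:main2} to each pair $(f_T,f_{T\cup\{1\}})$ and then rewrites $L_j^{\epsilon}(f_T)$ in terms of $L_j^{\epsilon}(f)$ before cancelling, whereas you expand every $\calH(f_S)$ directly against the fixed base $f$. The underlying cancellation is identical, and your observation that $\calH(f_S)$ is a polynomial of degree at most $2$ in the indicator variables, hence killed by the threefold difference, is a clean way to package it.

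One imprecision should be fixed: the identity $\calH(f)-\calH(f_S)=\sum_{1\le i<j\le 3}c_{ij}\,\chi_{ij}(S)$ is false as written when $m>3$, because \eqref{eq:main2} also collects pairs $(i,j)$ with $i\in S$ and $j>3$ (for such a pair exactly one of $i,j$ lies in $S$ as well). This does not threaten the conclusion: letting the pair sum run over all $1\le i<j\le m$ and setting $x_j=0$ for $j>3$, those extra terms contribute $c_{ij}x_i$, which has degree $1$ in $(x_1,x_2,x_3)$ and is therefore also annihilated by $V_3$; equivalently, $\sum_{S\subset\{1,2,3\}}(-1)^{\abs{S}}[\,i\in S\,]=0$. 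The paper's proof carries these $j\ge 4$ terms explicitly and cancels them the same way, so you only need to enlarge the index set of your pair sum before interchanging the two summations.
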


\begin{remark}\label{rem:def_finite_type_inv}
Our Definition~\ref{def:finite_type_invariant} of ``finite type invariants'' is modeled after \cite{BirmanLin93} and is similar to those in \cite{HabiroKanenobuShima98,Watanabe07}.
Finite type invariants can also in some cases be characterized as the functions that factor through stages of the \emph{Taylor tower} \cite{GoodwillieWeiss99}.
Voli\'c uses Bott--Taubes integrals to prove in \cite[Theorem~4.5]{Volic06} that $\R$-valued invariants of order $k$ factor through the $2k$-th stage of the homology Taylor tower for $\emb{3}{1}$.
In \cite{BCSS03} it is proved that $v_2\colon \emb{3}{1}\to\Z$ factors through the third stage of the homotopy Taylor tower for $\emb{3}{1}$.
Budney, Conant, Koytcheff, and Sinha \cite{BCKS14} show that the ($k+1$)-st stage of the homotopy tower defines order $k$ invariants, and based on spectral sequence calculations conjecture any additive invariant of order $k$ factors through this tower.

In general, the equivalence between these two characterizations of finiteness, Birman--Lin and Goodwillie--Weiss, is not known.
Theorem~\ref{thm:order2} together with a result of Munson \cite{Munson05} proves the equivalence for $\emb{6k}{4k-1}$.
\end{remark}

In \S\ref{s:gen_imm} we define an invariant for generic \emph{liftable} immersion $\R^{4k-1}\looparrowright\R^{6k-1}$ and discuss some of its properties.

\begin{theorem}\label{thm:main2}
Let $g\in\Imm{6k-1}{4k-1}$ be a liftable generic immersion---namely, $g=p\circ f$ for some $f\in\emb{6k}{4k-1}$.
Choose a lift $f\in\emb{6k}{4k-1}$ and define
\begin{equation}\label{eq:E=H-link}
 E(g):=\calH(f)-\frac{1}{4}\sum_{(i,\epsilon)<(j,\epsilon')}(-1)^{\epsilon+\epsilon'}lk(L^{\epsilon}_i,L_j^{\epsilon'}).
\end{equation}
Then $E$ is independent of the choice of $f$ and is invariant of generic immersions.
$E$ varies at the strata of non-generic immersions as described in Lemmas~\ref{lem:jump_definite}, \ref{lem:jump_indefinite}, and \ref{lem:jump_triple}.
In the case $k=1$, the invariant $E$ is not of order 1 in the sense of Ekholm \cite{Ekholm01,Ekholm01-2}.
\end{theorem}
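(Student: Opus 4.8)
The plan is to treat the three assertions in turn, obtaining the first two almost immediately from Theorem~\ref{thm:main1} and reducing the description of the jumps to a single geometric principle.

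First I would show that $E(g)$ is independent of the lift. Given two lifts $f,f'$ of the same generic immersion $g$, I may move each into almost planar position by a vertical isotopy; this changes neither $\calH$ (an isotopy invariant) nor the components $L_i^{\epsilon}\subset\R^{4k-1}$, hence neither term in \eqref{eq:E=H-link}. Two almost planar lifts of the same $g$ differ only in which sheet lies above the other at each crossing, so $f'=f_S$ for the set $S$ of crossings at which they disagree. Then \eqref{eq:main1} says exactly that
\[
 \calH(f)-\tfrac14\sum_{(i,\epsilon)<(j,\epsilon')}(-1)^{\epsilon+\epsilon'}lk(L_i^{\epsilon}(f),L_j^{\epsilon'}(f))
\]
is unchanged when $f$ is replaced by $f_S$, i.e.\ $E$ is well defined. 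Generic invariance is even simpler: along a path of generic liftable immersions the self-intersection keeps its combinatorial type, a continuously varying lift $f_t$ is an isotopy so $\calH(f_t)$ is constant, and each $L_i^{\epsilon}$ sweeps out an isotopy in $\R^{4k-1}$ so every $lk(L_i^{\epsilon},L_j^{\epsilon'})$ is constant. Thus $E$ is locally constant on the (open) space of generic liftable immersions.

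The description of the jumps rests on one observation: across each codimension-one stratum (a definite or indefinite self-tangency, or a triple point) the two or three sheets of $g$ that degenerate can be kept at mutually distinct heights in the $x_{6k}$-direction throughout the local move. Choosing a lift $f$ with this property---which need not be almost planar near the move---the family $f_t$ is then an honest isotopy of embeddings, so $\calH(f_t)$ does \emph{not} jump. Consequently the entire variation of $E$ is carried by the linking-number sum, and for each stratum type it suffices to compute how that sum changes: a self-tangency creates or destroys a small double-point sphere $A_{m+1}\cong S^{2k-1}$ together with a pair $L_{m+1}^0\sqcup L_{m+1}^1\subset\R^{4k-1}$ whose mutual linking records the definite/indefinite distinction and whose linking with the pre-existing $L_j^{\epsilon'}$ vanishes, while a triple point reconnects the existing components and alters finitely many $lk(L_i^{\epsilon},L_j^{\epsilon'})$ in a way fixed by a standard local model. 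These computations are precisely Lemmas~\ref{lem:jump_definite}, \ref{lem:jump_indefinite} and \ref{lem:jump_triple}; the only care needed is correct bookkeeping of the orientations of Lemma~\ref{lem:orientation} and of the signs $(-1)^{\epsilon+\epsilon'}$.

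Finally, for $k=1$ I would prove that $E$ is not of order one. After recalling that Ekholm's finiteness is measured by the variation of an invariant across the self-tangency and triple-point strata---so that an order-one invariant has discriminant jumps governed by order-zero (locally constant) data---the task is to exhibit a jump of $E$ that is not of this form. Using Ogasa's realization \cite{Ogasa02}, any link $L=L^0\sqcup L^1\subset\R^3$ occurs as the double-point set of a liftable immersion whose lift is the trivial embedding, whence $E=\tfrac14\,lk(L^0,L^1)$ can be varied freely; feeding this freedom into the triple-point jump of Lemma~\ref{lem:jump_triple} produces a discriminant jump of $E$ depending on a double-point linking number that varies inside a single connected component of the stratum, which is incompatible with order one. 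I expect this last step to be the main obstacle, since it requires matching the Birman--Lin style finiteness of Definition~\ref{def:finite_type_invariant} with Ekholm's formulation and verifying that Ogasa's construction can be performed with the local discriminant type held fixed while the relevant linking number is changed; by contrast the earlier parts are formal consequences of Theorem~\ref{thm:main1} and the height-separation trick.
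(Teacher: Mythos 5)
Your treatment of well-definedness and generic invariance matches the paper: two lifts of the same $g$ become $f$ and $f_S$ after vertical isotopies, so \eqref{eq:main1} gives well-definedness, and invariance follows from locally lifting a generic regular homotopy to an isotopy of embeddings (the paper does this carefully via the map $G_t(\bfx,\bfy)=g_t(\bfx)-g_t(\bfy)$ and compactness of $G_t^{-1}(\zero)$, a construction you should not omit). Likewise your ``height-separation'' principle --- that $\calH(f_t)$ does not jump across a codimension-one stratum, so the whole variation of $E$ sits in the linking sum --- is exactly the paper's mechanism.

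However, there are two genuine errors. First, your description of the self-tangency strata is wrong in the indefinite case: you assert that a self-tangency creates or destroys a small sphere pair whose linking with all pre-existing components vanishes. That is true only for \emph{definite} self-tangencies (Lemma~\ref{lem:jump_definite}). An indefinite self-tangency performs a surgery on the double point set, and when the index of $Q$ is $1$ or $2k-1$ this surgery can join two components $L_i^{\epsilon},L_j^{\epsilon}$ into a connected sum $K^{\epsilon}=L_i^{\epsilon}\sharp L_j^{\epsilon}$; the resulting jump is $E(g_t)-E(g_{-t})=\pm\bigl(lk(L_i^0,L_j^0)+lk(L_i^1,L_j^1)\bigr)/4$ (Lemma~\ref{lem:jump_indefinite}), which is generally nonzero. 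Your version would force $E(g_t)=E(g_{-t})$ at every self-tangency, contradicting the statement you are trying to prove. Second, your route to the $k=1$ claim cannot work: by Lemma~\ref{lem:jump_triple} the jump of $E$ at a triple point is always $0$ or $\pm 1/4$ --- it is universally bounded and determined by purely combinatorial coincidence data among $L_i^{\epsilon},L_j^{\epsilon},L_p^{\epsilon}$, not by a linking number that ``varies inside a single connected component of the stratum.'' The paper's argument instead exploits precisely the indefinite self-tangency jump you discarded: Ogasa's theorem realizes an arbitrary two-component link $L=K_1\cup K_2$ as the double point set of a liftable $g$ with trivial lift, and an explicit regular homotopy produces an indefinite self-tangency between $A_1$ and $A_2$ at which $E$ jumps by $\pm lk(K_1,K_2)/2$; since Ekholm's order-one invariants $J$ and $\mathit{St}$ have bounded jumps, $E$ cannot be a linear combination of them. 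To repair your proof you must correct the indefinite case and move the $k=1$ argument from the triple-point stratum to the indefinite self-tangency stratum.
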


$E$ is an invariant of generic immersions because the linking numbers are constant unless the isotopy class of the self-intersection of $p\circ f$ changes.
That $E$ is well defined follows from our formula in Theorem~\ref{thm:main1}.
See \S\ref{ss:E_invariant}.

\begin{remark}
The invariant $E$ is a high-dimensional analogue to the Lin--Wang invariant $\alpha$ \cite[Definition~5.4]{LinWang96} for generic plane curves;
let $g\in\Imm{2}{1}$ be a generic plane curve, and let $f\in\emb{3}{1}$ be its lift---namely, $g=p\circ f$.
Then \eqref{eq:LinWang_original} implies that
\[
 \alpha(g):=v_2(f)-\frac{1}{4}\Bigl\langle\raisebox{-0.2\height}{\includegraphics[scale=0.7]{X_graph_unsigned.eps}},G(f)\Bigr\rangle
\]
is independent of the choice $f$.
The invariant $\alpha$ is in fact equal to a linear combination of the Arnold invariants $J^{\pm}$ and $\mathit{St}$ of a plane curve \cite{Arnold63}.
On the other hand Ekholm \cite[\S6.1]{Ekholm01}, \cite[\S4]{Ekholm01-2} defined invariants for generic immersions $M^{nm-1}\looparrowright N^{n(m+1)-1}$ that behave similarly to the Arnold invariants (for us, $m=2$, $n=2k$).
One may thus expect that $E$ might be a linear combination of Ekholm invariants, but Theorem~\ref{thm:main2} says that in general it is not the case.
\end{remark}
\section{Example}\label{s:example}
Using Theorem~\ref{thm:main1}, we show $\calH(\calS)=\pm1$ for Haefliger's generator $\calS$ of $\pi_0(\emb{6k}{4k-1})\cong\Z$ \cite{Haefliger62}.
Fix $\alpha,\beta>0$ so that $2\beta<\alpha$.
Consider the {\em Borromean ring} $X\sqcup Y\sqcup Z\subset\R^{6k}$, where
\begin{align*}
 X&:=\partial\{(\zero,\bfy,\bfz)\in(\R^{2k})^{\times 3}\mid\abs{\bfy}\le\alpha,\,\abs{\bfz}\le\beta\}\approx S^{4k-1},\\
 Y&:=\partial\{(\bfx,\zero,\bfz)\in(\R^{2k})^{\times 3}\mid\abs{\bfz}\le\alpha,\,\abs{\bfx}\le\beta\}\approx S^{4k-1},\\
 Z&:=\partial\{(\bfx,\bfy,\zero)\in(\R^{2k})^{\times 3}\mid\abs{\bfx}\le\alpha,\,\abs{\bfy}\le\beta\}\approx S^{4k-1}
\end{align*}
(see Figure~\ref{fig:Borromean}), and smooth their corners to get smooth $(4k-1)$-spheres (denoted by $X,Y,Z$ again).
\begin{figure}[htb]
\centering
\input{Borromean}
\caption{Haefliger's generator $\calS$ and the self-intersection of $p\circ\calS$}
\label{fig:Borromean}
\end{figure}
$\calS$ is defined as the connected-sum $\calS:=X\sharp Y\sharp Z\sharp f_0$, where $f_0\colon\R^{4k-1}\subset\R^{6k}$ is (isotopic to) the standard inclusion.

Let $\bfn:=(1,\dotsc,1)\in\R^{6k}$, and consider the projection $p\colon\R^{6k}\to(\R\bfn)^{\perp}$, instead of $\R^{6k}\to\R^{6k-1}\times\{0\}$.
Then $p\circ\calS$ is generic, as seen in Figure~\ref{fig:Borromean}.
To detect $p(X)\cap p(Y)$, find $(\zero,\bfy,\bfz)\in X$ and $t\in\R$ satisfying $(\zero,\bfy,\bfz)+t\bfn\in Y$.
In fact, $p(X)\cap p(Y)=A_1\sqcup A_2$ has two components, and the double point set $L_i^{\epsilon}\subset\R^{4k-1}$ satisfying $A_i=p(\calS(L_i^0))=p(\calS(L_i^1))$ ($i=1,2$) is given as follows:
put $\bfn':=(1,\dotsc,1)\in\R^{2k}$ and $\beta':=\beta/\sqrt{2k}$; then
\begin{alignat*}{3}
 &L_1^0=\{(-\beta'\bfn',\zero,\bfz)\in Y\mid\abs{\bfz+\beta'\bfn'}=\beta\},& \quad
 &L_1^1=\{(\zero,\beta'\bfn',\bfz)\in X\mid\abs{\bfz}=\beta\},& \\
 &L_2^0=\{(\zero,-\beta'\bfn',\bfz)\in X\mid\abs{\bfz}=\beta\},& \quad
 &L_2^1=\{(\beta'\bfn',\zero,\bfz)\in Y\mid\abs{\bfz-\beta'\bfn'}=\beta\}& 
\end{alignat*}
(in the computation we use $2\beta<\alpha$).
By symmetry we see that $p(Y)\cap p(Z)=A_3\sqcup A_4$ has two components, and the double point sets satisfying $A_i=p(\calS(L_i^0))=p(\calS(L_i^1))$ ($i=3,4$) are given as
\begin{alignat*}{3}
 &L_3^0=\{(\bfx,-\beta'\bfn',\zero)\in Z\mid\abs{\bfx+\beta'\bfn'}=\beta\},& \quad
 &L_3^1=\{(\bfx,\zero,\beta'\bfn')\in Y\mid\abs{\bfx}=\beta\},& \\
 &L_4^0=\{(\bfx,\zero,-\beta'\bfn')\in Y\mid\abs{\bfx}=\beta\},& \quad
 &L_4^1=\{(\bfx,\beta'\bfn',\zero)\in Z\mid\abs{\bfx-\beta'\bfn'}=\beta\}.&
\end{alignat*}
Similarly, $p(Z)\cap p(X)=A_5\sqcup A_6$ satisfies $A_i=p(\calS(L_i^0))=p(\calS(L_i^1))$ ($i=5,6$), where
\begin{alignat*}{3}
 &L_5^0=\{(\zero,\bfy,-\beta'\bfn')\in X\mid\abs{\bfy+\beta'\bfn'}=\beta\},& \quad
 &L_5^1=\{(\beta'\bfn',\bfy,\zero)\in Z\mid\abs{\bfy}=\beta\},&\\
 &L_6^0=\{(-\beta'\bfn',\bfy,\zero)\in Z\mid\abs{\bfy}=\beta\},& \quad
 &L_6^1=\{(\zero,\bfy,\beta'\bfn')\in X\mid\abs{\bfy-\beta'\bfn'}=\beta\}.&
\end{alignat*}
If we take the connected-sum in the construction of $\calS$ suitably, then there are no self-intersection of $p\circ\calS$ other than $A_1\sqcup\dotsb\sqcup A_6$.
The corner smoothing does not cause any trouble; for example, any $(\zero,\bfy,-\beta'\bfn')\in L^0_5$ satisfies $\abs{\bfy}\le 2\beta<\alpha$ and $L^0_5$ does not touch the corner.
All $L_i^{\epsilon}$ are $S^{2k-1}$ and they form six disjoint Hopf links
\[
 L_1^1\sqcup L_6^1,\ L_2^0\sqcup L_5^0\subset X,\quad
 L_1^0\sqcup L_4^0,\ L_2^1\sqcup L_3^1\subset Y,\quad
 L_3^0\sqcup L_6^0,\ L_4^1\sqcup L_5^1\subset Z,
\]
all of whose linking numbers are by symmetry equal to each other.
Since we can unknot $\calS$ by the crossing change at $A_1$, by \eqref{eq:main2}
\[
 \calH(\calS)=\frac{1}{2}((-1)^{1+1}lk(L^1_1,L^1_6)+(-1)^{0+0}lk(L^0_1,L^0_4))=\pm\frac{1}{2}(1+1)=\pm1.
\]
\section{A review of an integral expression of the Haefliger invariant}\label{s:def_H}
\subsection{Graph complex and configuration space integral}
Here we briefly recall the cochain complex $\D^*=\D^*_{n,j}$ of \emph{oriented graphs} and the linear map $I\colon\D^*\to\Omega^*_{\mathit{DR}}(\emb{n}{j})$ that the author defined in \cite{K08} generalizing those in \cite{CCL02,CattaneoRossi05,Watanabe07}.
The map $I$ is a cochain map under some conditions on $n,j$ and graphs.
See \cite{K08, KWatanabe12} for details.

By a \emph{graph} we mean a graph with two types of vertices (called \emph{i-} and \emph{e-vertices}) and two types of edges (called $\eta$- and $\theta$\emph{-edges}).
The sets of i- and e-vertices, $\eta$- and $\theta$-edges of a graph $\Gamma$ are denoted by, respectively, $V_{\rm i}(\Gamma)$, $V_{\rm e}(\Gamma)$, $E_{\eta}(\Gamma)$, and $E_{\theta}(\Gamma)$.
We give the weights $j$, $n$, $j-1$, and $n-1$ to the elements of $V_{\rm i}(\Gamma)$, $V_{\rm e}(\Gamma)$, $E_{\eta}(\Gamma)$, and $E_{\theta}(\Gamma)$.
An \emph{orientation} of a graph is a choice of ordering of the weighted set $V_{\rm i}(\Gamma)\sqcup V_{\rm e}(\Gamma)\sqcup E_{\eta}(\Gamma)\sqcup E_{\theta}(\Gamma)$ together with the orientations of edges, modulo even permutations.
Reversing $\eta$- and $\theta$-edges changes the sign of orientation of graphs by $(-1)^j$ and $(-1)^n$, respectively.
Figure~\ref{fig:H} shows examples of graphs;
i- and e-vertices are depicted by $\bullet$ and $\circ$, respectively, and $\eta$- and $\theta$-edges are depicted by solid and dotted arrows, respectively (Figure~\ref{fig:H} shows graphs for even $n$ and the orientation of the $\theta$-edges are omitted).
\begin{figure}[htb]
\centering
\unitlength 0.1in
\begin{picture}( 26.9000,  7.6000)(  5.4000,-14.3000)
%
{\color[named]{Black}{%
\special{pn 0}%
\special{sh 1.000}%
\special{ia 800 800 30 30  0.0000000 6.2831853}%
}}%
{\color[named]{Black}{%
\special{pn 8}%
\special{ar 800 800 30 30  0.0000000 6.2831853}%
}}%
%
{\color[named]{Black}{%
\special{pn 8}%
\special{pa 830 1400}%
\special{pa 1370 1400}%
\special{fp}%
\special{sh 1}%
\special{pa 1370 1400}%
\special{pa 1304 1380}%
\special{pa 1318 1400}%
\special{pa 1304 1420}%
\special{pa 1370 1400}%
\special{fp}%
}}%
%
{\color[named]{Black}{%
\special{pn 8}%
\special{pa 800 830}%
\special{pa 800 1370}%
\special{dt 0.045}%
}}%
%
{\color[named]{Black}{%
\special{pn 8}%
\special{pa 1400 830}%
\special{pa 1400 1370}%
\special{dt 0.045}%
}}%
\put(8.5000,-11.0000){\makebox(0,0){$1$}}%
\put(13.5000,-11.0000){\makebox(0,0){$2$}}%
%
{\color[named]{Black}{%
\special{pn 0}%
\special{sh 1.000}%
\special{ia 800 1400 30 30  0.0000000 6.2831853}%
}}%
{\color[named]{Black}{%
\special{pn 8}%
\special{ar 800 1400 30 30  0.0000000 6.2831853}%
}}%
%
{\color[named]{Black}{%
\special{pn 0}%
\special{sh 1.000}%
\special{ia 1400 1400 30 30  0.0000000 6.2831853}%
}}%
{\color[named]{Black}{%
\special{pn 8}%
\special{ar 1400 1400 30 30  0.0000000 6.2831853}%
}}%
%
{\color[named]{Black}{%
\special{pn 0}%
\special{sh 1.000}%
\special{ia 1400 800 30 30  0.0000000 6.2831853}%
}}%
{\color[named]{Black}{%
\special{pn 8}%
\special{ar 1400 800 30 30  0.0000000 6.2831853}%
}}%
%
{\color[named]{Black}{%
\special{pn 0}%
\special{sh 1.000}%
\special{ia 2400 800 30 30  0.0000000 6.2831853}%
}}%
{\color[named]{Black}{%
\special{pn 8}%
\special{ar 2400 800 30 30  0.0000000 6.2831853}%
}}%
%
{\color[named]{Black}{%
\special{pn 0}%
\special{sh 1.000}%
\special{ia 3200 800 30 30  0.0000000 6.2831853}%
}}%
{\color[named]{Black}{%
\special{pn 8}%
\special{ar 3200 800 30 30  0.0000000 6.2831853}%
}}%
%
{\color[named]{Black}{%
\special{pn 0}%
\special{sh 1.000}%
\special{ia 2800 1400 30 30  0.0000000 6.2831853}%
}}%
{\color[named]{Black}{%
\special{pn 8}%
\special{ar 2800 1400 30 30  0.0000000 6.2831853}%
}}%
%
{\color[named]{Black}{%
\special{pn 8}%
\special{ar 2800 1000 30 30  0.0000000 6.2831853}%
}}%
%
{\color[named]{Black}{%
\special{pn 8}%
\special{pa 2780 990}%
\special{pa 2400 800}%
\special{dt 0.045}%
}}%
%
{\color[named]{Black}{%
\special{pn 8}%
\special{pa 2820 990}%
\special{pa 3200 800}%
\special{dt 0.045}%
}}%
%
{\color[named]{Black}{%
\special{pn 8}%
\special{pa 2800 1030}%
\special{pa 2800 1400}%
\special{dt 0.045}%
}}%
\put(26.0000,-9.0000){\makebox(0,0)[rt]{$1$}}%
\put(30.0000,-9.0000){\makebox(0,0)[lt]{$2$}}%
\put(27.5000,-12.0000){\makebox(0,0){$3$}}%
\put(8.0000,-8.0000){\makebox(0,0)[rb]{$(1)$}}%
\put(8.0000,-14.0000){\makebox(0,0)[rt]{$(2)$}}%
\put(14.0000,-14.0000){\makebox(0,0)[lt]{$(3)$}}%
\put(14.0000,-8.0000){\makebox(0,0)[lb]{$(4)$}}%
\put(23.8000,-8.0000){\makebox(0,0)[rb]{$(1)$}}%
\put(32.2000,-8.0000){\makebox(0,0)[lb]{$(2)$}}%
\put(27.8000,-14.0000){\makebox(0,0)[rt]{$(3)$}}%
\end{picture}%
\caption{Graphs $X$ and $Y$}
\label{fig:H}
\end{figure}
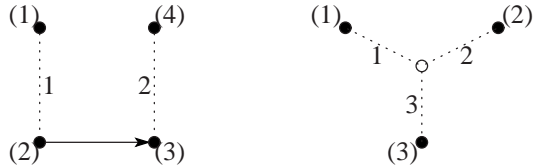
The numbers assigned to vertices and edges indicate the ordering.
Denote by $\D^{p,q}$ the vector space spanned by oriented graphs of \emph{order} $p$ and of \emph{degree} $q$,
where
\[
 \mathrm{ord}(\Gamma):=\abs{E_{\theta}(\Gamma)}-\abs{E_{\rm e}(\Gamma)},\quad
 \deg(\Gamma):=2\abs{E_{\theta}(\Gamma)}-3\abs{E_{\rm e}(\Gamma)}-\abs{E_{\rm i}(\Gamma)}.
\]
For an oriented graph $\Gamma$ consider the \emph{configuration space}
\[
 C_{\Gamma}^{\circ}:=\{(f;\bfx;\bfy)\in\emb{n}{j}\times\Conf^{\circ}_{\abs{V_{\rm i}(\Gamma)}}(\R^j)\times\Conf^{\circ}_{\abs{V_{\rm e}(\Gamma)}}(\R^n)\mid
 f(x_i)\ne y_j\ (\forall i,j)\},
\]
where $\Conf^{\circ}_m(M):=M^{\times m}\setminus\bigcup_{i<j}\{x_i\ne x_j\}$ stands for the usual configuration space.
To each $\eta$-edge $\overrightarrow{ij}$ and $\theta$-edge $\overrightarrow{st}$, we assign the \emph{generalized Gauss maps}
\begin{alignat*}{10}
 &\varphi^{\eta}_{ij}  \colon C^{\circ}_{\Gamma}\to S^{j-1},& &\quad& &(f;\bfx;\bfy)\mapsto\frac{\bfx_j-\bfx_i}{\abs{\bfx_j-\bfx_i}},&\\
 &\varphi^{\theta}_{st}\colon C^{\circ}_{\Gamma}\to S^{n-1},& &\quad& &(f;\bfx;\bfy)\mapsto\frac{\bfz_t-\bfz_s}{\abs{\bfz_t-\bfz_s}},&
\end{alignat*}
where
\[
 z_s:=
 \begin{cases}
  f(x_s) & \text{if }s\text{ is an i-vertex},\\
  y_s    & \text{if }s\text{ is an e-vertex}.
 \end{cases}
\]
Choose a representative of the orientation of $\Gamma$ so that $\theta$-edges follow after $\eta$-edges.
Let $\varphi_{\Gamma}\colon C_{\Gamma}^{\circ}\to(S^{j-1})^{\times\abs{E_{\eta}(\Gamma)}}\times(S^{n-1})^{\times\abs{E_{\theta}(\Gamma)}}$ be the product of all these Gauss maps assigned to the edges of $\Gamma$.
The product is taken in the order of the orientation of $\Gamma$.
Let $\vol_{S^{N-1}}\in\Omega^{N-1}_{\mathit{DR}}(S^{N-1})$ ($N=n,j$) be a unit volume form of $S^{N-1}$ that is (anti-)invariant under the action of $O(N)$ fixing the poles $\{\pm e_N\}\subset S^{N-1}$, where $e_N:=(0,\dotsc,0,1)\in\R^N$.
Define $\omega_{\Gamma}\in\Omega^*_{DR}(C_{\Gamma}^{\circ})$ by
\[
 \omega_{\Gamma}:=\varphi^*_{\Gamma}(\vol_{S^{j-1}}^{\times\abs{E_{\eta}(\Gamma)}}\times\vol_{S^{j-1}}^{\times\abs{E_{\theta}(\Gamma)}}),
\]
where $\vol_{S^{a-1}}\times\vol_{S^{b-1}}$ is the product of volume forms pulled back on $S^{a-1}\times S^{b-1}$ by the projections.
Integrating $\omega_{\Gamma}$ along the fibers of the natural projections
\[
 \pi_{\Gamma}\colon C^{\circ}_{\Gamma}\to\emb{n}{j}
\]
(whose fibers are subspaces of usual configuration spaces), we obtain
\[
 I(\Gamma):=\pi_{\Gamma*}\omega_{\Gamma}\in\Omega^*_{\mathit{DR}}(\emb{n}{j}).
\]
$I(\Gamma)$ is independent of the representative of the orientation of $\Gamma$.
The degree of $I(\Gamma)$ can be given using ${\rm ord}(\Gamma)$, $\deg(\Gamma)$, and the first betti number of $\Gamma$ (thought of as a $1$-dimensional cell complex); see \cite[\S3]{K08}.

\begin{remark}\label{rem:compact}
The above integrals converge since we may replace $C^{\circ}_{\Gamma}$ with its (fiberwise) Fulton--MacPherson compactification (see \cite{Sinha03}) denoted by $C_{\Gamma}$, over which the generalized Gauss maps are smoothly extended.
Thus we obtain a linear map $I\colon \D^*\to\Omega^*_{DR}(\emb{n}{j})$.
\end{remark}

By the generalized Stokes' theorem, $dI(\Gamma)$ is a linear combination of integrals along the codimension $1$ boundary faces of the fibers of $\pi_{\Gamma}$, which are subspaces of compactified configuration spaces (Remark~\ref{rem:compact}).
The boundary faces of the compactifications are stratified according to the ``complexity of collisions of points.''
The strata in which exactly two points collide are called \emph{principal}.
The author proved in \cite{K08} that, if the volume forms are (anti-)invariant, many such integrals along the non-principal boundary faces cancel out or vanish; the large part of the proof follows the arguments in \cite{CCL02,CattaneoRossi05,Watanabe07}.
Thus if we define the \emph{coboundary maps} $\delta\colon\D^{p,q}\to\D^{p,q+1}$ as the signed sum of graphs obtained by collapsing two vertices together with the edges between them (see \cite[\S2.3]{K08} for signs), the map $I$ becomes a cochain map.
More precisely, the following holds.

\begin{theorem}[{\cite[Theorem~1.2]{K08}}]\label{thm:K08}
The map $I$ is a cochain map if
\begin{itemize}
\item
	restricted to the subcomplex of tree graphs and $n,j$ are of same parity, or
\item
	restricted to the subcomplex of graphs of betti number not greater than $1$ and both $n,j$ are odd.
\end{itemize}
\end{theorem}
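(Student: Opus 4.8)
The plan is to reduce the identity $dI(\Gamma)=I(\delta\Gamma)$ to an analysis of the codimension-one boundary faces of the fibers of $\pi_\Gamma$ in the Fulton--MacPherson compactification $C_\Gamma$ (Remark~\ref{rem:compact}). Since each $\vol_{S^{N-1}}$ is closed, the form $\omega_\Gamma$, being a pullback of a product of such forms under the generalized Gauss maps, is closed on $C_\Gamma^{\circ}$; as the Gauss maps extend smoothly to $C_\Gamma$, it is closed there too. The generalized Stokes' theorem for integration along the fibers of $\pi_\Gamma$ then yields
\[
 dI(\Gamma)=d\,\pi_{\Gamma*}\omega_\Gamma=\pm\,\pi_{\Gamma*}(d\omega_\Gamma)\pm(\partial\pi_\Gamma)_*(\omega_\Gamma|_{\partial})=\pm(\partial\pi_\Gamma)_*(\omega_\Gamma|_{\partial}),
\]
so the whole content of the theorem lies in identifying this boundary contribution.

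First I would stratify $\partial C_\Gamma$ by the combinatorics of collisions. The codimension-one faces $\partial_A C_\Gamma$ are indexed by subsets $A$ of the vertices of $\Gamma$ that collide, either among themselves in $\R^j$ (i-vertices colliding along the long embedding), among themselves in $\R^n$ (e-vertices, or a mixed cluster, colliding in ambient space), or escaping to infinity. A face is \emph{principal} when $\abs{A}=2$. On a principal face the two colliding vertices, together with the edge joining them, collapse, and the fiber integral over the remaining configuration computes exactly $I$ of the graph obtained by contracting that edge. Matching these faces against the signed sum defining $\delta$ (with the signs recorded in \cite[\S2.3]{K08}) shows that the sum of the principal-face contributions equals $I(\delta\Gamma)$. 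It then remains to prove that every non-principal face contributes zero.

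The vanishing of the non-principal (``hidden'' and ``anomalous'') faces is where the hypotheses on $n,j$ and on the subcomplex enter, and this is the main obstacle. I would handle these by the now-standard mechanisms of \cite{CCL02,CattaneoRossi05,Watanabe07}, adapted to the present mixed setting of two vertex types and two edge types. The principal tools are: (i) a dimension count showing that on a face where a cluster $A$ with $\abs{A}\ge 3$ collapses, the relevant restriction of $\varphi_\Gamma$ factors through a space of too low dimension, so $\omega_\Gamma|_{\partial_A C_\Gamma}$ vanishes for degree reasons; (ii) Kontsevich-type symmetry arguments, in which the residual translation and rotation freedom of the collapsed cluster, combined with the $O(N)$-(anti-)invariance of $\vol_{S^{N-1}}$, forces the integrand to be odd and hence the integral to vanish; and (iii) an involution on the surviving faces pairing them in cancelling pairs. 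The parity constraints ($n,j$ of the same parity for trees, both odd in the Betti-number-$\le 1$ case) are exactly what makes the sign bookkeeping in (ii) and (iii) consistent, since reversing an $\eta$- or $\theta$-edge multiplies the orientation by $(-1)^j$ resp.\ $(-1)^n$ and the volume forms are correspondingly (anti-)invariant. Restricting to trees eliminates the hidden faces in which an interior subgraph collapses, while the bound on the first Betti number in the odd--odd case limits the cycles that could otherwise support a nonvanishing hidden or anomalous contribution.

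Finally I would treat the faces at infinity separately: because every $f\in\emb{n}{j}$ is the standard inclusion outside a compact set, the Gauss maps become constant as points escape, and the corresponding integrals vanish by the same degree and symmetry reasoning. Assembling the steps, the only surviving boundary contributions are the principal ones, giving $dI(\Gamma)=I(\delta\Gamma)$ under the stated hypotheses. The delicate point throughout, and the step I expect to demand the most care, is the complete cancellation of the hidden faces together with the anomalous top face; controlling these is precisely what restricts the conclusion to trees (either parity) and to graphs of Betti number at most one (both $n,j$ odd), mirroring the dichotomy familiar from the classical Bott--Taubes theory.
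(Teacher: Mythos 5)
Your outline follows exactly the route the paper attributes to \cite{K08}: apply the generalized Stokes theorem over the fiberwise Fulton--MacPherson compactification, identify the principal (two-point-collision) faces with the terms of $\delta\Gamma$, and argue that all non-principal faces vanish or cancel using dimension counts and the $O(N)$-(anti-)invariance of the volume forms. The present paper does not reprove the theorem---it simply cites \cite[Theorem~1.2]{K08} and summarizes this same strategy---so at the level of architecture you and the paper agree.

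That said, your write-up is a scaffold rather than a proof: the entire content of the theorem is the vanishing of the hidden and anomalous contributions under precisely the stated hypotheses, and you delegate this to ``the now-standard mechanisms'' without carrying out the case analysis in the mixed setting of i-/e-vertices and $\eta$-/$\theta$-edges, which is where all the work in \cite{K08} lives. One of your heuristic justifications is also inaccurate: restricting to tree graphs does \emph{not} eliminate hidden faces (a cluster of three or more vertices can still collapse inside a tree), and the anomalous face is problematic for trees as well---indeed the graph $Y$ of Figure~\ref{fig:H} is a tree, yet for $(n,j)=(6k,4k-1)$ (where $n$ and $j$ have opposite parity, so neither hypothesis of the theorem holds) the anomalous face $\Sigma_{3,1}$ is exactly what forces the correction term $c$ in \S\ref{ss:Haefliger_inv}. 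The tree and Betti-number hypotheses enter through the symmetry and involution arguments on those faces, not by removing the faces from consideration, so the sentence ``restricting to trees eliminates the hidden faces'' would need to be replaced by an actual vanishing argument for them.
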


Conjecturally, the map $I$ would always be a cochain map and a quasi-isomorphism.
In fact, $\D^*$ looks very similar to the graph complexes given in \cite{AroneTurchin14} that compute the rational homology and homotopy of $\emb{n}{j}$ in the stable dimensions.
The map $I$ yields many non-zero cohomology classes even in the non-stable dimensions and in some dimensions not necessarily satisfying the condition in Theorem~\ref{thm:K08}; for example, $H^3_{DR}(\emb{5}{2})\ne 0$.
See \cite{KWatanabe12,Watanabe07}.

\subsection{The Haefliger invariant}\label{ss:Haefliger_inv}
Let $X,Y\in\D^{2,0}$ be graphs in Figure~\ref{fig:H}, and denote $C_{4,0}:=C_X$ and $C_{3,1}:=C_Y$.
We note that $C_{4,0}=\emb{6k}{4k-1}\times\Conf^{\circ}_4(\R^{4k-1})$ and $C_{3,1}\subset\emb{6k}{4k-1}\times\Conf^{\circ}_3(\R^{4k-1})\times\R^{6k}$.
In fact, $H:=X/2+Y/6\in\D^{2,0}$ is a cocycle and $I(H)$ is a $0$-form of $\emb{6k}{4k-1}$.
Unfortunately, Theorem~\ref{thm:K08} might fail for $(n,j)=(6k,4k-1)$;
at present, it is not known whether the integral along the ``anomalous boundary face'' $\Sigma_{3,1}\subset\partial C_{3,1}$ (where all the four points $f(\bfx_1),f(\bfx_2),f(\bfx_3)$, and $\bfx_4$ collapse to a single point) vanishes or not.
In \cite{K08} we add a correction term $c$ (defined below) to $I(H)$ to kill the anomalous contribution and get a closed-form $\calH:=I(H)+c$.

The correction term $c$ is defined as follows.
The interior ${\rm Int}\,\Sigma_{3,1}$ of $\Sigma_{3,1}$ can be described by the following pullback square:
\[
 \xymatrix{
  {\rm Int}\,\Sigma_{3,1}\ar[r]\ar[d] & B\ar[d]^-{\rho}\\
  \R^{4k-1}\times\emb{6k}{4k-1}\ar[r]^-D & \Inj_{6k,4k-1}
 }
\]
Let us explain the spaces and maps in the above diagram.
$\Inj_{6k,4k-1}$ is the space of linear, injective maps $\R^{4k-1}\hookrightarrow\R^{6k}$.
The space $B$ is defined as
\begin{multline*}
 B:=\{(\lambda;(\bfx_1,\bfx_2,\bfx_3);\bfx_4)\in\Inj_{6k,4k-1}\times\Conf^{\circ}_3(\R^{4k-1})\times\R^{6k}\mid \\
 \lambda(\bfx_i)\ne\bfx_4,\, 1\le i\le 3\}/\R^1_+\ltimes\R^{4k-1},
\end{multline*}
where $\R^1_+\ltimes\R^{4k-1}$ acts diagonally on $\Conf^{\circ}_3(\R^{4k-1})\times\R^{6k}$ as the positive scalings and translations along $\lambda (\R^{4k-1})$.
The map $\rho$ is the natural projection, and $D$ is the differential
\[
 D(x;f):=(\mathit{df}_x\colon T_x\R^{4k-1}=\R^{4k-1}\hookrightarrow\R^{6k}=T_{f(x)}\R^{6k}).
\]
For $i=1,2,3$, the map
\[
 \psi_i\colon B\to S^{6k-1},\quad
 [\iota;(\bfx_1,\bfx_2,\bfx_3);\bfx_4]\mapsto\frac{\bfx_4-\iota(\bfx_i)}{\abs{\bfx_4-\iota(\bfx_i)}}
\]
is well defined.
Put $\psi:=\psi_1\times\psi_2\times\psi_3\colon B\to (S^{6k-1})^{\times 3}$ and consider
\[
 \omega :=\psi^*\vol_{S^{6k-1}}^{\times 3}\in\Omega^{18k-3}_{\mathit{DR}}(B).
\]
We can see that $\rho_*\omega\in\Omega^{4k}_{\mathit{DR}}(\Inj_{6k,4k-1})$ is closed \cite[Lemma~5.22]{K08}.
$\Inj_{6k,4k-1}$ is homeomorphic to the Stiefel manifold $V_{6k,4k-1}$ of $(4k-1)$-frames in $\R^{6k}$, and hence $H^{4k}_{\mathit{DR}}(\Inj_{6k,4k-1})=0$.
Therefore, we can find $\mu\in\Omega^{4k-1}_{\mathit{DR}}(\Inj_{6k,4k-1})$ such that
\[
 \rho_*\omega =d\mu.
\]
The correction term $c\colon \emb{6k}{4k-1}\to\R$ is defined by
\[
 c(f):=\frac{1}{6}\int_{\R^{4k-1}}(\mathit{df})^*\mu\in\R,
\]
where $\mathit{df}\colon\R^{4k-1}\to\Inj_{6k,4k-1}$ is defined by the differential $x\mapsto\mathit{df}_x$.

Let $C_{4,0}(f)$ and $C_{3,1}(f)$ be the fibers of $\pi_X\colon C_{4,0}\to\emb{6k}{4k-1}$ and $\pi_Y\colon C_{3,1}\to\emb{6k}{4k-1}$ over $f$.
These fibers are finite-dimensional configuration spaces with $\dim C_{4,0}(f)=\deg\omega_X$ and $\dim C_{3,1}(f)=\deg\omega_Y$.
$\calH(f)$ is calculated as
\[
 \calH(f)=\frac{1}{2}\int_{C_{4,0}(f)}\omega_X+\frac{1}{6}\int_{C_{3,1}(f)}\omega_Y+c(f).
\]

\begin{theorem}[\cite{K08}]\label{thm:H}
If $\vol_{S^{N-1}}$ ($N=6k,4k-1$) are (anti-)invariant, then the $0$-form $\calH:=I(H)+c\colon\emb{6k}{4k-1}\to\R$ is closed and induces a group isomorphism $\pi_0(\emb{6k}{4k-1})\cong\Z$.
In particular, $\calH$ is a $\Z$-valued invariant.
\end{theorem}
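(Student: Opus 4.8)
The plan is to prove the two assertions in turn: that $d\calH=0$, so that $\calH$ descends to a function on $\pi_0(\emb{6k}{4k-1})$, and that the descended map is a group isomorphism onto $\Z$. For closedness, since $\calH=\tfrac{1}{2}I(X)+\tfrac{1}{6}I(Y)+c$, I would compute $d\calH$ term by term. Each $\omega_\Gamma$ ($\Gamma\in\{X,Y\}$) is a pullback of a product of the closed volume forms, so $d\omega_\Gamma=0$, and the generalized Stokes' theorem for the fiber integration $\pi_{\Gamma*}$ reduces $dI(\Gamma)$ to a signed sum of integrals over the codimension-$1$ boundary faces of the compactified fibers $C_\Gamma(f)$. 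These faces are stratified by the complexity of the collision, and I would dispose of them in groups. The principal faces, where exactly two points collide, contribute the fiber integral attached to the contracted graph $\delta\Gamma$; since $H=X/2+Y/6$ is a cocycle, $\delta H=0$, and the principal contributions of $\tfrac{1}{2}I(X)$ and $\tfrac{1}{6}I(Y)$ cancel. For the non-principal (``hidden'') faces, where three or more points collide away from the anomalous configuration, I would invoke the $(\text{anti-})$invariance of $\vol_{S^{N-1}}$ under the pole-fixing orthogonal action, which forces the relevant pushforwards to vanish or to cancel in pairs, exactly as in the arguments underlying Theorem~\ref{thm:K08}.

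The remaining contribution is that of the anomalous face $\Sigma_{3,1}\subset\partial C_{3,1}$, where $f(\bfx_1),f(\bfx_2),f(\bfx_3)$ and $\bfx_4$ all collapse to one point; here the invariance argument does not apply and the integral need not vanish. The key point is the pullback description of $\Int\Sigma_{3,1}$: by the definitions of $B$, $\rho$, $\psi$, and $\omega$, the pushforward of $\omega_Y$ along the fiber of $\pi_Y$ restricted to this face equals $\int_{\R^{4k-1}}(\mathit{df})^*\rho_*\omega$ as a $0$-form on $\emb{6k}{4k-1}$. Since $\rho_*\omega$ is closed and $H^{4k}_{DR}(\Inj_{6k,4k-1})=H^{4k}_{DR}(V_{6k,4k-1})=0$, I may write $\rho_*\omega=d\mu$ and set $c(f)=\tfrac{1}{6}\int_{\R^{4k-1}}(\mathit{df})^*\mu$, so that $dc=\tfrac{1}{6}\int_{\R^{4k-1}}(\mathit{df})^*d\mu=\tfrac{1}{6}\int_{\R^{4k-1}}(\mathit{df})^*\rho_*\omega$. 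Choosing orientations consistently so that this appears with the sign opposite to the anomalous term of $\tfrac{1}{6}\,dI(Y)$, the two cancel and $d\calH=0$. I expect this step—matching the anomalous face integral with $dc$, including the orientation and degree bookkeeping and the justification that the configuration integral near $\Sigma_{3,1}$ genuinely factors through $\mathit{df}$—to be the \textbf{main obstacle}, since it is precisely the point at which the general cochain-map property of Theorem~\ref{thm:K08} is unavailable for $(n,j)=(6k,4k-1)$.

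Having established that $\calH$ is locally constant, hence an isotopy invariant, it descends to a map $\pi_0(\emb{6k}{4k-1})\to\R$, and it remains to identify this with an isomorphism onto $\Z$. I would first verify additivity under connected sum, $\calH(f\sharp g)=\calH(f)+\calH(g)$, by a neck-stretching (locality) argument: as the connecting neck is elongated, the dominant configurations contributing to $I(X)$, $I(Y)$, and $c$ localize entirely in one summand, while the mixed configurations are suppressed, so $\calH$ is a homomorphism on the monoid of isotopy classes and hence on the group $\pi_0(\emb{6k}{4k-1})\cong\Z$. Since by Haefliger's theorem this group is generated by $\calS$, it then suffices to evaluate $\calH(\calS)$ and to check that it is a generator $\pm1$ of $\Z$; this may be done either directly from the configuration-space integral as in \cite{K08}, or, using the now-established invariance, via Theorem~\ref{thm:main1} as in \S\ref{s:example}, which yields $\calH(\calS)=\pm1$. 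Additivity together with this value forces $\calH$ to take integer values on every class and to be injective with image $\Z$, so $\calH\colon\pi_0(\emb{6k}{4k-1})\xrightarrow{\cong}\Z$, identifying $\calH$ up to sign with the Haefliger invariant and proving in particular that it is $\Z$-valued.
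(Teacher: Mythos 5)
Your proposal is correct and follows essentially the same route as the proof in \cite{K08} that the paper cites: closedness via the generalized Stokes theorem, with the cocycle condition $\delta H=0$ cancelling the principal faces, the (anti-)invariance of the volume forms disposing of the hidden faces, and the correction term $c$ (built from $\rho_*\omega=d\mu$ using $H^{4k}_{\mathit{DR}}(V_{6k,4k-1})=0$) cancelling the anomalous face $\Sigma_{3,1}$; then additivity under connected sum together with the evaluation $\calH(\calS)=\pm1$ on Haefliger's generator. The paper itself defers the proof to \cite{K08} and only supplies the alternative evaluation of the generator in \S\ref{s:example}, which, as you correctly note, can substitute for the spinning-construction computation once invariance has been established.
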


In \cite{K08} Theorem~\ref{thm:H} is proved by evaluating $\calH$ over a generator of $\pi_0(\emb{6k}{4k-1})$ given by the spinning construction \cite{Budney08,RosemanTakase07}.
The computation in \S\ref{s:example} gives an alternative proof.

To simplify the computations below, we will take the (anti-)invariant volume forms $\vol_{S^{N-1}}$ ($N=6k,4k-1$) so that their supports are contained in small neighborhoods of the poles $\pm e_N:=(0,\dotsc,0,\pm 1)\in\R^N$.
We call such a volume form a \emph{Dirac-type} volume form.
The following allows us to use such volume forms.

\begin{proposition}[{\cite[Propositions~3.5, 3.6]{K08}}]\label{prop:vol}
The invariant $\calH$ is independent of the choice of the (anti-)invariant volume forms $\vol_{S^{N-1}}$, $N=6k,4k-1$.
\end{proposition}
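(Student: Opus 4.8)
The plan is to connect any two admissible systems of volume forms by a straight-line path and to exploit the integrality already furnished by Theorem~\ref{thm:H}. Write $\vol_0=(\vol^{4k-1}_0,\vol^{6k}_0)$ and $\vol_1=(\vol^{4k-1}_1,\vol^{6k}_1)$ for two choices of (anti-)invariant unit volume forms, and set $\vol_t:=(1-t)\vol_0+t\vol_1$ for $t\in[0,1]$. Each $\vol^{N-1}_t$ is again a top-degree (hence closed) form on $S^{N-1}$ with $\int_{S^{N-1}}\vol^{N-1}_t=1$, and it inherits the (anti-)invariance under the subgroup of $O(N)$ fixing the poles $\{\pm e_N\}$ because that condition is linear; thus each $\vol_t$ is a legitimate propagator. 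Denote by $\calH_t$ the $0$-form $I(H)+c$ built from $\vol_t$, with the primitive $\mu_t$ of $\rho_*\omega_t$ chosen to vary continuously in $t$ (see below).

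First I would check that $t\mapsto\calH_t(f)$ is continuous for each fixed $f\in\emb{6k}{4k-1}$. The fiber integrals $\int_{C_{4,0}(f)}\omega_X$ and $\int_{C_{3,1}(f)}\omega_Y$ are multilinear in the volume forms entering $\omega_X,\omega_Y$, hence polynomial, and in particular continuous, in $t$. For the correction term $c$, the form $\rho_*\omega_t\in\Omega^{4k}_{\mathit{DR}}(\Inj_{6k,4k-1})$ depends continuously on $t$ and is exact for every $t$ (since $H^{4k}_{\mathit{DR}}(\Inj_{6k,4k-1})=0$), so one may select primitives $\mu_t$ depending continuously on $t$, for instance via a fixed chain homotopy on the Stiefel manifold $V_{6k,4k-1}$. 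Then $c(f)=\tfrac16\int_{\R^{4k-1}}(\mathit{df})^*\mu_t$ is continuous in $t$, and therefore so is $\calH_t(f)$.

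With continuity in hand the conclusion is immediate. By Theorem~\ref{thm:H}, for every $t$ the $0$-form $\calH_t$ is closed and induces an isomorphism $\pi_0(\emb{6k}{4k-1})\cong\Z$; being locally constant, it satisfies $\calH_t(f)\in\Z$ for all $f$ and all $t$. A continuous $\Z$-valued function on $[0,1]$ is constant, so $\calH_0(f)=\calH_1(f)$, and since $\vol_0,\vol_1$ were arbitrary, $\calH$ is independent of the choice of volume forms.

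The hard part will be the continuity of the correction term, i.e.\ producing the primitives $\mu_t$ continuously; this is the only genuinely nontrivial point in this route, as the configuration-space integrals are manifestly continuous. If one wishes to avoid invoking integrality (for example to obtain a proof not presupposing Theorem~\ref{thm:H}), the alternative is to show $\tfrac{d}{dt}\calH_t(f)=0$ directly: writing $\vol_1^{N-1}-\vol_0^{N-1}=d\alpha$ with $\alpha$ chosen (anti-)invariant by averaging over the compact pole-fixing group, and differentiating under the integral sign, each term becomes the fiber integral of an exact form, which Stokes' theorem converts into integrals over $\partial C_{4,0}(f)$ and $\partial C_{3,1}(f)$. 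The principal faces cancel because $H=X/2+Y/6$ is a cocycle ($\delta H=0$); the hidden, non-anomalous faces vanish by the (anti-)invariance of the volume forms exactly as in \cite{K08}; and the anomalous face $\Sigma_{3,1}$ is cancelled by $\tfrac{d}{dt}c$ by the very construction of $c$. In that route the anomalous face is the delicate point, which is precisely why the correction term was introduced.
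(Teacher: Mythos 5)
The paper does not actually prove this proposition; it quotes it from \cite[Propositions~3.5, 3.6]{K08}, where the argument is the ``hard'' one you relegate to your final paragraph: transgress the difference of volume forms to an (anti-)invariant primitive $\alpha$, apply Stokes' theorem fiberwise, and kill the boundary contributions (principal faces via $\delta H=0$, hidden faces via (anti-)invariance, the anomalous face via the variation of $c$). Your primary route is genuinely different, and its continuity analysis is fine: convex combinations of (anti-)invariant unit volume forms are again admissible, the fiber integrals are polynomial in $t$ by multilinearity, and a fixed chain homotopy on the Stiefel manifold produces primitives $\mu_t$ depending polynomially on $t$.

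The gap is that the integrality input is circular. You invoke Theorem~\ref{thm:H} to assert that $\calH_t$ is $\Z$-valued for \emph{every} $t$, i.e.\ for every admissible choice of volume forms. But Theorem~\ref{thm:H} is proved in \cite{K08} by evaluating $\calH$ on a generator of $\pi_0(\emb{6k}{4k-1})$, and that evaluation is only tractable for a \emph{convenient} (Dirac-type) choice of $\vol_{S^{N-1}}$; the statement for arbitrary (anti-)invariant forms is then obtained precisely by appealing to Proposition~\ref{prop:vol}. (The same issue appears in this paper: the computation $\calH(\calS)=\pm1$ in \S\ref{s:example} rests on Proposition~\ref{prop:I(X)}, which assumes Dirac-type forms, so integrality of $\calH_t$ for a generic $t$ on the segment is not available without the proposition you are trying to prove.) Without an independent proof that each $\calH_t$ takes values in a discrete set, continuity buys you nothing. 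So the burden falls back on your sketched alternative, which is the correct approach but is not carried out: the substantive points there --- that the transgressed forms restricted to the hidden faces vanish by (anti-)invariance of $\alpha$, and that the anomalous face $\Sigma_{3,1}$ contribution is cancelled by $\frac{d}{dt}c$ for a compatible choice of $\mu_t$ --- are exactly the content of \cite[Propositions~3.5, 3.6]{K08} and would need to be proved, not cited.
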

\section{Proofs of Theorems~\ref{thm:main1} and \ref{thm:order2}}\label{s:finite}
It is well known that the \emph{linking number} of closed oriented submanifolds $M^{2k-1}\sqcup N^{2k-1}\subset\R^{4k-1}$ is an isotopy invariant and can be defined as
\[
 lk(M,N):=\int_{M\times N}\varphi^*\vol_{S^{4k-2}},
\]
where $\varphi\colon M\times N\to S^{4k-2}$ is the generalized Gauss map given by
\[
 \varphi(x,y):=\frac{y-x}{\abs{y-x}}.
\]
If $M,N$ are in generic positions, then $p(M\sqcup N)$ is generically immersed in $\R^{4k-2}$ and the pairs $(x,y)\in M\times N$ such that $p(\varphi(x,y))=\zero\in\R^{4k-2}$ form a $0$-dimensional submanifold of $M\times N$.
If, moreover, $\vol_{S^{4k-2}}$ is Dirac-type (see the paragraph before Proposition~\ref{prop:vol}), then
\begin{equation}\label{eq:linking_number}
\begin{split}
 lk(M,N)&=\sum_{(x,y)\in(p\circ\varphi)^{-1}(\zero)}\int_{\genfrac{}{}{0pt}{1}{\text{a neighborhood}}{\text{of }(x,y)}}\varphi^*\vol_{S^{4k-2}} \\
 &=\frac{1}{2}\sum_{(x,y)\in(p\circ\varphi)^{-1}(\zero)}\deg\varphi|_{\genfrac{}{}{0pt}{1}{\text{a neighborhood}}{\text{of }(x,y)}}
\end{split}
\end{equation}
and each $\deg\varphi=\pm1$.
This is because the integrand $\varphi^*\vol_{S^{4k-2}}$ is zero outside neighborhoods of such pairs, and the integral of $\vol_{S^{4k-2}}$ over one component of $\supp(\vol_{S^{4k-2}})$ is $1/2$.
This interpretation gives us the following, which we use in \S\ref{s:gen_imm}.

\begin{lemma}\label{lem:link_connected_sum}
Let $M_1,M_2$ and $N_1,N_2$ be $(2k-1)$-dimensional disjoint oriented submanifolds of $\R^{4k-1}$.
If the connected-sums $M_1\sharp M_2$ and $N_1\sharp N_2$ are taken in $\R^{4k-1}$ so that $\abs{p(M_i)\cap p(N_j)}$ ($i,j=1,2$) do not increase, then
\[
 \sum_{i,j=1,2}lk(M_i,N_j)=lk(M_1\sharp M_2,N_1\sharp N_2).
\]
\end{lemma}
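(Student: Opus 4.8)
The plan is to deduce the identity directly from the crossing-counting interpretation of the linking number recorded in \eqref{eq:linking_number}. Fix a Dirac-type volume form $\vol_{S^{4k-2}}$ and place the four manifolds in generic position, so that for each pair $(M_i,N_j)$ the ``crossings'' --- the ordered pairs $(x,y)\in M_i\times N_j$ with $p(x)=p(y)$, equivalently the points of $p(M_i)\cap p(N_j)$ --- are finite in number, and each contributes $\pm\tfrac{1}{2}$ to $lk(M_i,N_j)$ according to the local degree of the Gauss map $\varphi$ there. The right-hand side $\sum_{i,j}lk(M_i,N_j)$ is therefore $\tfrac{1}{2}$ times the signed count of all crossings among the four pairs of pieces, and it suffices to identify this collection of crossings, with signs, with the crossings of the single pair $(M_1\sharp M_2,\,N_1\sharp N_2)$ appearing on the left-hand side.

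First I would arrange the two connected sums so that the disk removals and the connecting tubes all take place in regions of $\R^{4k-1}$ disjoint from (the pairs projecting to) every crossing of the $p(M_i)\cap p(N_j)$; this is possible because the crossings form a finite set while the surgery can be supported in arbitrarily small balls. With this choice every original crossing survives and lies on a portion of $M_1\sharp M_2$ (resp.\ $N_1\sharp N_2$) that coincides, as an oriented submanifold, with the corresponding piece $M_i$ (resp.\ $N_j$) --- recall that the connected sum is oriented so as to restrict to the given orientation on each summand away from the gluing sphere. Hence near each such crossing the Gauss map $\varphi$ for the pair $(M_1\sharp M_2,\,N_1\sharp N_2)$ agrees with that for $(M_i,N_j)$, and the local degrees (the signs) coincide. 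By the hypothesis that $\abs{p(M_i)\cap p(N_j)}$ does not increase, the tubes contribute no further crossings, so the crossings of the pair of connected sums are exactly the disjoint union of the crossings of the four pairs $(M_i,N_j)$, with matching signs. Applying \eqref{eq:linking_number} once more to the left-hand side then yields the claimed equality.

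The step requiring the most care is the geometric bookkeeping above: one must verify both that no existing crossing is lost (handled by localizing the surgery away from the finite crossing set) and --- via the stated hypothesis --- that no new crossing is introduced by the tubes, while simultaneously checking that the orientation conventions for the connected sum reproduce the correct local degree at each retained crossing. Once this matching of crossings-with-signs is established, the conclusion is a purely formal summation, the uniform factor $\tfrac{1}{2}$ in \eqref{eq:linking_number} causing no difficulty.
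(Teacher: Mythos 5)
Your proof is correct and follows essentially the same route as the paper: the lemma is presented there as an immediate consequence of the crossing-counting interpretation \eqref{eq:linking_number} of the linking number with a Dirac-type volume form, which is exactly the bookkeeping you carry out (surgery supported away from the finitely many crossings, the hypothesis ruling out new crossings from the tubes, and matching local degrees). Your write-up simply makes explicit the details the paper leaves implicit.
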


For a single oriented submanifold $L^{2k-1}\subset\R^{4k-1}$ a similar formula to \eqref{eq:linking_number} does not give rise to an isotopy invariant, but if $L$ is almost planar, then such a function can be computed by counting $(x,y)\in\Conf^{\circ}_2(L)$ with $p(\varphi(x,y))=\zero$.

\begin{definition}\label{def:writhe}
Let $L^{2k-1}\subset\R^{4k-1}$ be a generic closed oriented submanifold such that $p(L)\subset\R^{4k-2}$ is a generically immersed manifold.
Define the \emph{writhe} $w(L)$ of $L$ by
\begin{equation}\label{eq:writhe_integral}
\begin{split}
 w(L)&=\sum_{(x,y)\in(p\circ\varphi)^{-1}(\zero)}\int_{\genfrac{}{}{0pt}{1}{\text{a neighborhood}}{\text{of }(x,y)}}\varphi^*\vol_{S^{4k-2}}\\
 &=\frac{1}{2}\sum_{(x,y)\in(p\circ\varphi)^{-1}(\zero)}\deg\varphi|_{\genfrac{}{}{0pt}{1}{\text{a neighborhood}}{\text{of }(x,y)}}.
\end{split}
\end{equation}
\end{definition}

For an almost planar $f\in\emb{6k}{4k-1}$, denote by $f^{\delta}$ the embedding obtained by a scaling in the $x_{6k}$-direction so that $f^{\delta}(\R^{4k-1})\subset\R^{6k-1}\times[0,\delta]$ (we often abbreviate $f^{\delta}$ as $f$).
$L^{\epsilon}_i$'s remain unchanged for any $\delta$.
We compute $I(X)(f^{\delta})$, $I(Y)(f^{\delta})$ and $c(f^{\delta})$ in the limit $\delta\to 0$.

\begin{proposition}[cf.\ {\cite[Proposition~4.3]{LinWang96}}]\label{prop:I(X)}
Suppose $\vol_{S^{N-1}}$ ($N=6k,4k-1$) are Dirac-type.
If $f\in\emb{6k}{4k-1}$ is almost planar and generic so that $lk(L_i^{\epsilon},L_j^{\epsilon'})$ and $w(L_i^{\epsilon})$ can be calculated by \eqref{eq:linking_number} and \eqref{eq:writhe_integral}, then
\[
 \lim_{\delta\to 0}I(X)(f^{\delta})=
 \frac{1}{2}\sum_{(i,\epsilon)<(j,\epsilon')}(-1)^{\epsilon+\epsilon'}lk(L_i^{\epsilon},L_j^{\epsilon'})
 +\frac{1}{4}\sum_{i,\epsilon}w(L_i^{\epsilon}).
\]
\end{proposition}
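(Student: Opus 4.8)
The plan is to use Proposition~\ref{prop:vol} to compute $I(X)$ with Dirac-type volume forms, so that $\omega_X$ is concentrated near those configurations for which each of the three Gauss maps lands near a pole $\pm e_N$. Recall (by the degree count $\abs{E_\eta}(4k-2)+\abs{E_\theta}(6k-1)=\dim\Conf^\circ_4(\R^{4k-1})=16k-4$, forcing $\abs{E_\eta}=1$, $\abs{E_\theta}=2$) that $X$ is the path on four i-vertices whose two outer edges are $\theta$-edges and whose middle edge is the $\eta$-edge. Writing the four points as $\bfx_1,\dotsc,\bfx_4\in\R^{4k-1}$, the $\theta$-edges sit on $\{\bfx_1,\bfx_2\}$ and $\{\bfx_3,\bfx_4\}$ and the $\eta$-edge on $\{\bfx_2,\bfx_3\}$. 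First I would locate the support of $\omega_X$ as $\delta\to0$. A $\theta$-edge Gauss map $(f(\bfx_t)-f(\bfx_s))/\abs{f(\bfx_t)-f(\bfx_s)}$ lies near $\pm e_{6k}$ exactly when $p(f(\bfx_s))$ and $p(f(\bfx_t))$ nearly coincide, i.e.\ when $\bfx_s,\bfx_t$ are the two preimages of a double point of the generic immersion $p\circ f$; hence each $\theta$-edge localizes its pair to a crossing, one endpoint on $L_i^0$ and one on $L_i^1$.

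The one remaining source of support to exclude is the diagonal, where $\bfx_s$ and $\bfx_t$ collide. There $f(\bfx_t)-f(\bfx_s)\approx df(\bfx_t-\bfx_s)$, and since $f$ is almost planar (Definition~\ref{def:special_position}) this direction is horizontal on the flat part $f(\R^{4k-1}\setminus\bigcup_iN(L_i^1))\subset\R^{6k-1}\times\{0\}$, while on $N(L_i^1)$ it has a vertical component of order $\delta$ that vanishes in the limit. Thus the diagonal lands near the equator, where a Dirac-type $\vol_{S^{6k-1}}$ is zero, and contributes nothing as $\delta\to0$. Consequently the two $\theta$-edges localize $(\bfx_1,\bfx_2)$ and $(\bfx_3,\bfx_4)$ to crossings $A_i,A_j$, leaving an integral over the corresponding sheets.

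Next I would perform the two $\theta$-fibre integrals. Over each crossing the $6k-1$ transverse directions parametrize the horizontal displacement $p(f(\bfx_t))-p(f(\bfx_s))$, and the fibre integral of the Dirac-type $\vol_{S^{6k-1}}$ over them computes the local degree, contributing $\pm\tfrac12$: the mass $\tfrac12$ sits at a single pole, and the sign records which sheet lies above, hence which of $\bfx_2,\bfx_3$ is the upper preimage. Comparing this sign with the orientations of $A_i$ and of its lifts $L_i^\epsilon$ fixed in Lemma~\ref{lem:orientation}, the two $\theta$-localizations together yield $\tfrac14(-1)^{\epsilon+\epsilon'}$, where $\epsilon,\epsilon'$ are the levels of the $\eta$-endpoints $\bfx_2\in L_i^\epsilon$, $\bfx_3\in L_j^{\epsilon'}$. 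What survives is the $\eta$-integral $\int_{L_i^\epsilon\times L_j^{\epsilon'}}\varphi^*\vol_{S^{4k-2}}$, which by the Dirac-type evaluations \eqref{eq:linking_number} and \eqref{eq:writhe_integral} equals $lk(L_i^\epsilon,L_j^{\epsilon'})$ when $(i,\epsilon)\ne(j,\epsilon')$ and $w(L_i^\epsilon)$ when $(i,\epsilon)=(j,\epsilon')$; the latter occurs precisely when both $\theta$-edges localize to the same crossing and the same sheet, since the $L_i^\epsilon$ are pairwise disjoint.

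Finally I would assemble the sum. Because the configuration integral is over labelled points, each unordered pair of distinct components is counted twice, once for each assignment of crossings to the two $\theta$-edges (the two values agreeing by the symmetry $lk(M,N)=lk(N,M)$ in this dimension), which upgrades the coefficient $\tfrac14$ to $\tfrac12$ for the linking terms; the self-pairings producing the writhe admit no such doubling and retain $\tfrac14$. Collecting the contributions gives the asserted formula. The main obstacle will be the sign bookkeeping of the third paragraph: reconciling the local degree of each $\theta$-Gauss map at its pole with the induced orientations of $A_i$ and $L_i^\epsilon$, the order in which the three volume forms are wedged, and the orientation of $\Conf^\circ_4(\R^{4k-1})$, so as to obtain exactly $(-1)^{\epsilon+\epsilon'}$ rather than some uncontrolled sign. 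A secondary technical point is making the vanishing of the diagonal contributions rigorous near the tubular neighborhoods $N(L_i^1)$ where $f$ is not flat, which the rescaling $\delta\to0$ is designed to handle.
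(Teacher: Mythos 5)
Your proposal is correct and follows essentially the same route as the paper: localize via the Dirac-type forms so that each $\theta$-edge pins its pair of points to a crossing (the diagonal being excluded because tangent directions become horizontal as $\delta\to 0$), reduce to a signed count of configurations contributing $\pm(1/2)^3$ each, and assemble linking numbers and writhes with the labelled-point doubling for distinct pairs but not for the self-pairings. The sign $(-1)^{\epsilon+\epsilon'}$ that you defer as ``the main obstacle'' is exactly what the paper settles by writing an explicit local model for the two sheets at a double point (using the orientations of Lemma~\ref{lem:orientation}) and computing the determinant of the full Jacobian $J(\varphi_X)_{\vec{\xi}}$ in Appendix~\ref{s:Jacobian}.
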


\begin{proof}
A configuration $\vec{\xi}=(\bfxi_1,\dotsc,\bfxi_4)\in C_{4,0}(f)$ can non-trivially contribute to $I(X)$ only if $\vec{\xi}\in\varphi_X^{-1}(\supp(\vol_{S^{6k-1}}^{\times 2}\times\vol_{S^{4k-2}}))$.
Since $\vol_{S^{N-1}}$ are Dirac-type, such a $\vec{\xi}$ must be in a neighborhood of $\varphi_X^{-1}(\pm e_{6k},\pm e_{6k},\pm e_{4k-1})$, where $e_N:=(0,\dotsc,0,1)\in S^{N-1}$.

If $\delta$ is sufficiently close to $0$, then no vectors tangent to $f(\R^{4k-1})$ point $\supp(\vol_{S^{6k-1}})$.
Thus $\vec{\xi}$ can be in $\varphi_X^{-1}(\supp(\vol_{S^{6k-1}}^{\times 2}\times\vol_{S^{4k-2}}))$ in the limit $\delta\to 0$ only if $(\bfxi_1,\bfxi_2)\in N(L_i^{\epsilon})\times N(L_i^{\epsilon+1})$ and $(\bfxi_3,\bfxi_4)\in N(L_j^{\epsilon'})\times N(L_j^{\epsilon'+1})$ for some $i,j$, possibly $i=j$ (recall that $N(L_i^{\epsilon})\subset\R^{4k-1}$ are closed disjoint tubular neighborhoods of $L_i^{\epsilon}$).
For $(s,t)=(1,2),(4,3)$, and any $\bfxi_t\in N(L_i^{\epsilon})$, we always find $\bfxi_s\in N(L_i^{\epsilon+1})$ such that $p(f(\bfxi_s)-f(\bfxi_t))$ is close to $\zero$.
Therefore, finding such a $\vec{\xi}=(\bfxi_1,\dotsc,\bfxi_4)\in\varphi_X^{-1}(\pm e_{6k},\pm e_{6k},\pm e_{4k-1})$ is equivalent to finding $(\bfxi_2,\bfxi_3)$ satisfying $p(\varphi^{\eta}_{23}(\bfxi_2,\bfxi_3))=\zero$.
By our assumption on $f$, the set of such $\vec{\xi}$\,'s is a $0$-dimensional submanifold of $C_{4,0}(f)$, each component of whose neighborhood is mapped homeomorphically into a component of $\supp(\vol_{S^{6k-1}}^{\times 2}\times\vol_{S^{4k-2}})$ via $\varphi_X$.
The integral of $\omega_X$ over such a component is $\pm(1/2)^3$, where the sign is the local degree of $\varphi_X$ at $\vec{\xi}$---that is, the determinant of the Jacobian $J(\varphi_X)_{\vec{\xi}}$.
The sum of these degrees would amount to linking numbers by \eqref{eq:linking_number}.

To compute $\deg\varphi_X$ at each $\vec{\xi}\in\varphi_X^{-1}(\pm e_{6k},\pm e_{6k},\pm e_{4k-1})$, we recall from \cite{Ekholm01,Ekholm01-2} the local model for two-fold self-intersection.
Let $g\in\Imm{6k-1}{4k-1}$ be a generic immersion, and let $q=g(p_1)=g(p_2)$ be a transverse two-fold self-intersection point.
In some local coordinates centered at $p_1$, $p_2$, and $q$, $g$ is given by
\begin{alignat*}{10}
 g(x_1,\dots,x_{4k-1})&=(x_1,\,& \dotsc,\, &x_{2k-1},\,& &x_{2k},\,& \dotsc,\, &x_{4k-1},\,& &0,       & \dotsc,\, &0)\quad       & &\text{near }p_1,&\\
 g(y_1,\dots,y_{4k-1})&=(y_1,\,& \dotsc,\, &y_{2k-1},\,& &0,       & \dotsc,\, & 0,        & &y_{2k},\,& \dotsc,\, &y_{4k-1})\quad& &\text{near }p_2.&
\end{alignat*}
Now consider a configuration $\vec{\xi}\in C_{4,0}(f)$ such that
\begin{equation}\label{eq:configuration1}
 \begin{cases}
  \bfxi_1\in L_i^1,\ \bfxi_2\in L_i^0,\ \bfxi_3\in L_j^0\text{ and }\bfxi_4\in L_j^1, \\
  \varphi_X(\vec{\xi})=(-e_{6k},-e_{6k},e_{4k-1})
 \end{cases}
\end{equation}
(Figure~\ref{fig:conf5}, left).
\begin{figure}[htb]
\centering
\includegraphics{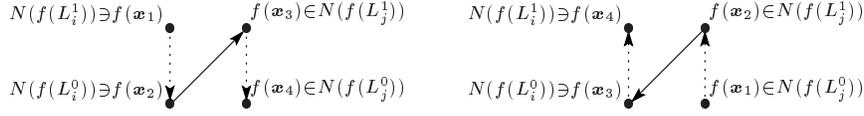}
\caption{Two configurations with the same contributions to $lk(L_i^0,L_j^1)$; the left shows a neighborhood of $\vec{\xi}$ satisfying \eqref{eq:configuration1}, and the right shows a neighborhood of $\vec{\xi'}$ satisfying \eqref{eq:configuration2}.}
\label{fig:conf5}
\end{figure}
Suppose that $(\bfxi_2,\bfxi_3)$ is a positive crossing---that is, $\deg\varphi_{23}^{\eta}|_{(\bfxi_2,\bfxi_3)}=+1$.
Then we can choose some local coordinates $\bfx$, $\bfy$, and $\bfz$ centered at $\bfxi_1$, $\bfxi_2$, and $\bfxi_4$ such that $\bfxi_3=(0,\dotsc,0,1)$ in the $\bfy$-coordinate (the same coordinate as for $\bfxi_2$), and we can also choose local coordinates in $\R^{6k}$ in which $f$ is given by
\begin{alignat*}{10}
 f(\bfx)&=(x_1,\,& \dotsc,\, &x_{2k-1},\,& &x_{2k},\,& \dotsc,\, &x_{4k-1},\,& &0,       & \dotsc,\, &0,\,& &1&)\quad       & &\text{near }\bfxi_1,&\\
 f(\bfy)&=(y_1,\,& \dotsc,\, &y_{2k-1},\,& &0,       & \dotsc,\, & 0,        & &y_{2k},\,& \dotsc,\, &y_{4k-1},\,& &0&)\quad& &\text{near }\bfxi_2,&
\end{alignat*}
and
\begin{alignat*}{100}
 f(\bfy)&=(0,\,  & \dotsc,\, &0,\,       & &y_1,& \dotsc,\, &y_{2k-1},\,& &y_{2k},\,& \dotsc,\, &y_{4k-2},\,& &y_{4k-1},\,& &0,         & &1)\quad& &\text{near }\bfxi_3,&\\
 f(\bfz)&=(z_1,\,& \dotsc,\, &z_{2k-1},\,& &0,  & \dotsc,\, &0,\,       & &z_{2k},\,& \dotsc,\, &z_{4k-2},\,& &1,\,       & &z_{4k-1},\,& &0)\quad& &\text{near }\bfxi_4.&
\end{alignat*}
Then by Lemma~\ref{lem:orientation}, as oriented manifolds, $L_*^{\epsilon}$'s are given by
\begin{itemize}
\item
 $L_i^1\cap(\bfx\text{-coordinate})=+\R^{2k-1}\times\{\zero\}^{2k}$,
\item
 $L_i^0\cap(\bfy\text{-coordinate})=+\R^{2k-1}\times\{\zero\}^{2k}$,
\item
 $L_j^1\cap(\bfy\text{-coordinate})=\{\zero\}^{2k-1}\times(-\R^{2k-1})\times\{1\}$, and
\item
 $L_j^0\cap(\bfz\text{-coordinate})=\{\zero\}^{2k-1}\times(-\R^{2k-1})\times\{0\}$
\end{itemize}
(see Figure~\ref{fig:conf}).
\begin{figure}[htb]
\centering
\includegraphics[scale=0.8]{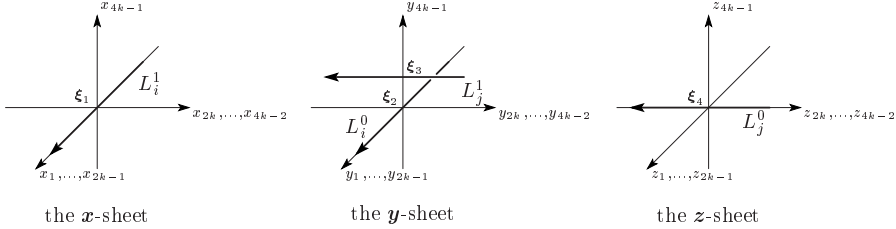}
\caption{Local picture of $L_i^0\cup L_i^1\cup L_j^0\cup L_j^1$}
\label{fig:conf}
\end{figure}
Using this local model, we can compute the Jacobian $J(\varphi_X)_{\vec{\xi}}$ of
\[
 \varphi_X\colon  N(L_i^1)\times N(L_i^0)\times N(L_j^1)\times N(L_j^0)\to S^{6k-1}\times S^{6k-1}\times S^{4k-2}
\]
at $\vec{\xi}$ explicitly.
Let $e_i:=(0,\dots,0,1,0,\dotsc,0)$ be the $i$th unit vector.
With respect to the natural positive basis $e_1,\dotsc,e_{16k-4}$ of $T_{\vec{\xi}}\Conf_4(\R^{4k-1})\cong T_{\vec{\xi}}\R^{16k-4}$ and the natural positive bases of the tangent spaces of spheres
\[
 e_1,\dotsc,e_{6k-1}\in T_{-e_{6k}}S^{6k-1}\quad\text{and}\quad
 e_1,\dotsc,e_{4k-2}\in T_{e_{4k-1}}S^{4k-2},
\]
following the ``outward normal first'' convention, $J(\varphi_X)_{\vec{\xi}}$ is given as in Appendix~\ref{s:Jacobian} and its determinant is $-1$.

A local model for a negative crossing $(\bfxi_2,\bfxi_3)$ (namely, $\deg\varphi_{23}^{\eta}|_{(\bfxi_2,\bfxi_3)}=-1$) is obtained from the above model by reversing the orientation of the $\bfz$-sheet, and, in this case, $J(\varphi_X)_{\vec{\xi}}=+1$.
Thus the integral of $\omega_X$ over a neighborhood of $\vec{\xi}$ satisfying \eqref{eq:configuration1} with $\deg\varphi^{\eta}_{23}|_{(\bfxi_2,\bfxi_3)}=\pm1$ is $\mp(1/2)^3$, and by \eqref{eq:linking_number} their sum is equal to $-lk(L_i^0,L_j^1)/4$.
By symmetry of $X$, the configurations near $\vec{\xi'}$ satisfying
\begin{equation}\label{eq:configuration2}
 \begin{cases}
  \bfxi'_1\in L_j^0,\ \bfxi'_2\in L_j^1,\ \bfxi'_3\in L_i^1\text{ and }\bfxi'_4\in L_i^0, \\
  \varphi_X(\vec{\xi})=(e_{6k},e_{6k},-e_{4k-1})
 \end{cases}
\end{equation}
(see Figure~\ref{fig:conf5}, right) also contribute to $I(X)(f)$ by $-lk(L_i^0,L_j^1)/4$.
Thus a link $L_i^0\sqcup L_j^1$ contributes to $I(X)(f)$ by $-lk(L_i^0,L_j^1)/2$.

A similar computation shows that $L_i^0$ and $L_j^0$ contribute to $I(X)(f)$ by $+lk(L_i^0,L_j^0)/2$;
we have the same Jacobian matrix as above, but in this case $\varphi^{\theta}_{34}(\vec{\xi})=e_{6k}$ and $(e_1,\dotsc,e_{6k-1})$ represent the negative orientation of $T_{e_{6k}}S^{6k-1}$ and the sign of the degree changes.
This observation shows that, in general, the link $L_i^{\epsilon}\sqcup L_j^{\epsilon'}$, $(i,\epsilon)\ne(j,\epsilon')$, contributes to $I(X)(f)$ by $(-1)^{\epsilon+\epsilon'}lk(L_i^{\epsilon},L_j^{\epsilon'})/2$.

In the case $(i,\epsilon)=(j,\epsilon')$, if $f$ is generic so that $w(L_i^{\epsilon})$ can be calculated by \eqref{eq:writhe_integral}, then the same computation as above shows that the configurations in $N(L_i^{\epsilon+1})\times N(L_i^{\epsilon})\times N(L_i^{\epsilon})\times N(L_i^{\epsilon+1})$ contributes to $I(X)(f)$ by $+w(L_i^{\epsilon})/4$ (no sign appears in this case since $\epsilon=\epsilon'$).
\end{proof}

\begin{lemma}\label{lem:I(Y)}
Let $f\in\emb{6k}{4k-1}$ be almost planar, and suppose that $\vol_{S^{6k-1}}$ is Dirac-type.
Then
\[
 \lim_{\delta\to 0}I(Y)(f^{\delta})=\lim_{\delta\to 0}I(Y)(f^{\delta}_S)
\quad\text{and}\quad
 \lim_{\delta\to 0}c(f^{\delta})=\lim_{\delta\to 0}c(f^{\delta}_S).
\]
\end{lemma}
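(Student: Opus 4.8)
The plan is to reduce both equalities to the crossing neighborhoods and then exploit the fact that a crossing change is nothing but the reflection $\iota$ in the $x_{6k}$-direction, under which the Dirac-type form $\vol_{S^{6k-1}}$ is (anti-)invariant because $\iota$ preserves the poles $\{\pm e_{6k}\}$. Since $f^\delta$ and $f^\delta_S$ coincide outside $\bigcup_{i\in S}N(L_i^1)$ and satisfy $f^\delta_S=\iota\circ f^\delta$ on each $N(L_i^1)$, every integrand difference appearing below is supported on configurations meeting these neighborhoods, so I only need to control the contributions localized there as $\delta\to 0$.

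First I would dispose of the correction term, which is the clean half. Writing $\gamma_f:=\mathit{df}\colon\R^{4k-1}\to\Inj_{6k,4k-1}$ and using $\gamma_{f_S}=\mathit{d}\iota\circ\gamma_f$ on $N(L_i^1)$, one gets
\[
 c(f^\delta)-c(f^\delta_S)=\frac16\sum_{i\in S}\int_{N(L_i^1)}\bigl(\gamma_{f^\delta}^*\mu-\gamma_{f^\delta_S}^*\mu\bigr).
\]
On the fixed compact set $N(L_i^1)$ the bump has height at most $\delta$, so the slopes of $f^\delta$ tend to $0$ and both $\gamma_{f^\delta}$ and $\gamma_{f^\delta_S}=\mathit{d}\iota\circ\gamma_{f^\delta}$ converge uniformly, as $\delta\to 0$, to the same horizontal frame map induced by $p\circ f$; here the two limits agree because $\mathit{d}\iota$ fixes every horizontal $(4k-1)$-frame. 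Hence the two pullback integrals share a common limit and their difference tends to $0$ by continuity, which is the second equality.

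For $I(Y)$ the same reflection is the guiding principle, but the analysis is genuinely harder because the limiting immersion $p\circ f$ is singular exactly where the integrand concentrates. Since $\vol_{S^{6k-1}}$ is Dirac-type, a configuration $(\bfxi_1,\bfxi_2,\bfxi_3;y)$ contributes to $I(Y)$ only when all three $\theta$-directions lie near $\pm e_{6k}$; as $\delta\to 0$ this forces $p(f(\bfxi_1)),p(f(\bfxi_2)),p(f(\bfxi_3))$ to coincide with $p(y)$, so the three i-vertices must project to a common point near some crossing. Near a transverse double point only two sheets are available, so by a pigeonhole argument two of the vertices lie over the same sheet and, each sheet being locally a graph, must collide. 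Thus in the limit the surviving contributions sit on the principal boundary faces where two i-vertices come together (and on the anomalous face $\Sigma_{3,1}$ where all of them do). On these faces I would pair the contribution of $f^\delta$ with that of $f^\delta_S$ by the $x_{6k}$-reflection that interchanges the over- and under-sheets at the crossing; the (anti-)invariance of $\vol_{S^{6k-1}}$ under $\iota$, together with the evenness of the codimension (cf.\ Lemma~\ref{lem:orientation}), makes the two contributions equal, so their difference cancels and $\lim_{\delta\to0}\bigl(I(Y)(f^\delta)-I(Y)(f^\delta_S)\bigr)=0$.

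The main obstacle is precisely this last step: unlike the $c$-term, $I(Y)$ is a fibre integral whose integrand blows up as $f^\delta$ flattens onto the immersion $p\circ f$, so one cannot argue by naive continuity at $\delta=0$. The work lies in making the localization to the double-point set rigorous over the Fulton--MacPherson compactification $C_{3,1}$ (Remark~\ref{rem:compact}) and in checking that $\iota$ induces a contribution-preserving identification of the relevant boundary strata of $C_{3,1}(f^\delta)$ and $C_{3,1}(f^\delta_S)$ with matching signs. Once the bookkeeping of orientations on these strata is settled, the cancellation is forced by the symmetry of the Dirac-type volume form.
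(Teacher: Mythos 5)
Your treatment of the correction term is fine and is essentially the paper's argument: $c$ extends continuously over the degeneration $f^{\delta}\to p\circ f$, and $f^{\delta}$ and $f^{\delta}_S$ degenerate to the \emph{same} immersion, so the two limits coincide. (The paper phrases this as continuity of $c$ on the space of immersions rather than via uniform convergence of the frame maps, but the content is the same.)

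The $I(Y)$ half has a genuine gap. Your reflection pairing only matches configurations for which \emph{all} the relevant points lie in a region where $f^{\delta}_S=\iota\circ f^{\delta}$, i.e.\ the three i-vertices map into a neighborhood of a single crossing $A_i$ with $i\in S$ \emph{and} the e-vertex $y$ lies in that same neighborhood. But the integrand difference is also supported on configurations where some i-vertex lies in $N(L_i^1)$ while $y$ is far away in the $x_{6k}$-direction: there all three Gauss directions are automatically close to $\pm e_{6k}$, so the Dirac-type hypothesis does not kill these terms, and no ambient reflection identifies the $f^{\delta}$- and $f^{\delta}_S$-contributions because the i-vertices outside $\bigcup_{i\in S}N(L_i^1)$ are not moved by the crossing change. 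For these configurations one must instead \emph{estimate}: the two embeddings differ by $O(\delta)$ while $\abs{y-f(\bfx_j)}$ is bounded below, so the difference of the integrals tends to $0$. The paper's proof is organized as a three-way decomposition by the position of the e-vertex ($y$ near a changed crossing; $y$ in the slab $\R^{6k-1}\times[-1,1]$ but away from the changed crossings; $y$ outside the slab), with the symmetry argument applying only to the first piece, equality of integrands to the second, and the limiting estimate to the third. Your proposal supplies only the first of these three arguments.

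Two further cautions. Your suggestion that the surviving contributions ``sit on the principal boundary faces'' of the compactification is misleading: $\lim_{\delta\to 0}I(Y)(f^{\delta})$ is in general nonzero (it is essentially the immersion invariant $E$ of \S\ref{s:gen_imm}), so the integral does not localize to a measure-zero stratum; only the support of the integrand concentrates, and the integral over shrinking neighborhoods survives. And when you do run the symmetry argument near a crossing, the equality of the two integrals comes from two compensating signs --- the map $\Phi=(\id;\iota)$ reverses the orientation of the fiber (one copy of $\R^{6k}$ is reflected) while $\Phi^*\omega_Y=(-1)^3\omega_Y=-\omega_Y$ by anti-invariance of the three pulled-back volume forms --- and your appeal to ``evenness of the codimension'' identifies neither sign.
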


\begin{proof}
The function $c\colon\emb{6k}{4k-1}\to\R$ is defined and continuous on the space of immersions, because $c(f)$ is determined by the differential of $f$ (see \S\ref{ss:Haefliger_inv}).
Thus $c(f^{\delta})$ continuously depends on $\delta$ including $\delta=0$ (then $f$ and $f_S$ collapse down to an immersion $f^0=f^0_S=p\circ f$), and hence $\lim_{\delta\to 0}c(f)=c(p\circ f)=\lim_{\delta\to 0}c(f_S)$.

The above observation also implies that $\lim_{\delta\to 0}I(Y)(f^{\delta})$ exists, because $I(Y)(f^{\delta})=6\calH(f^{\delta})-3I(X)(f^{\delta})-6c(f^{\delta})$ and the limit of the right-hand side exists by Proposition~\ref{prop:I(X)}, the existence of $\lim c(f^{\delta})$, and the fact that $\calH$ is an isotopy invariant.
To compute $\lim_{\delta\to 0}I(Y)(f^{\delta})$, we may assume that $\delta<1$.
Choose open neighborhoods $f(N(L_i^0))\subset U_i'\subset V_i'$ in $\R^{6k-1}$ (we are assuming $f$ is almost planar) so that $\overline{U_i'}\subset V_i'$, $V_i'\cap V_j'=\emptyset$ ($i\ne j$), and $f(N(L_i^1))\subset U_i:=U_i'\times[-1,1]$ (Figure~\ref{fig:conf4}).
\begin{figure}[htb]
\centering
\unitlength 0.1in
\begin{picture}( 24.0000, 13.6500)(  4.0000,-16.0000)
{\color[named]{Black}{%
\special{pn 13}%
\special{pa 1200 1000}%
\special{pa 1220 1000}%
\special{pa 1246 994}%
\special{pa 1250 992}%
\special{pa 1256 992}%
\special{pa 1286 980}%
\special{pa 1296 974}%
\special{pa 1300 972}%
\special{pa 1326 956}%
\special{pa 1330 952}%
\special{pa 1336 950}%
\special{pa 1340 946}%
\special{pa 1346 942}%
\special{pa 1350 938}%
\special{pa 1356 936}%
\special{pa 1370 924}%
\special{pa 1376 920}%
\special{pa 1426 880}%
\special{pa 1430 878}%
\special{pa 1446 866}%
\special{pa 1450 862}%
\special{pa 1456 858}%
\special{pa 1460 856}%
\special{pa 1466 852}%
\special{pa 1470 848}%
\special{pa 1476 844}%
\special{pa 1500 830}%
\special{pa 1506 828}%
\special{pa 1516 822}%
\special{pa 1546 810}%
\special{pa 1550 808}%
\special{pa 1556 806}%
\special{pa 1580 802}%
\special{pa 1586 802}%
\special{pa 1590 800}%
\special{pa 1610 800}%
\special{pa 1616 802}%
\special{pa 1620 802}%
\special{pa 1646 806}%
\special{pa 1650 808}%
\special{pa 1656 810}%
\special{pa 1686 822}%
\special{pa 1696 828}%
\special{pa 1700 830}%
\special{pa 1726 844}%
\special{pa 1730 848}%
\special{pa 1736 852}%
\special{pa 1740 856}%
\special{pa 1746 858}%
\special{pa 1750 862}%
\special{pa 1756 866}%
\special{pa 1770 878}%
\special{pa 1776 880}%
\special{pa 1826 920}%
\special{pa 1830 924}%
\special{pa 1846 936}%
\special{pa 1850 938}%
\special{pa 1856 942}%
\special{pa 1860 946}%
\special{pa 1866 950}%
\special{pa 1870 952}%
\special{pa 1900 972}%
\special{pa 1906 974}%
\special{pa 1916 980}%
\special{pa 1946 992}%
\special{pa 1950 992}%
\special{pa 1956 994}%
\special{pa 1980 1000}%
\special{pa 2000 1000}%
\special{fp}%
}}%
%
{\color[named]{Black}{%
\special{pn 13}%
\special{pa 800 1000}%
\special{pa 1200 1000}%
\special{fp}%
}}%
%
{\color[named]{Black}{%
\special{pn 13}%
\special{pa 2000 1000}%
\special{pa 2400 1000}%
\special{fp}%
}}%
%
{\color[named]{Black}{%
\special{pn 13}%
\special{pa 1800 1200}%
\special{pa 1480 880}%
\special{fp}%
}}%
%
{\color[named]{Black}{%
\special{pn 13}%
\special{pa 1300 700}%
\special{pa 1430 830}%
\special{dt 0.045}%
}}%
%
{\color[named]{Black}{%
\special{pn 4}%
\special{pa 2000 780}%
\special{pa 2000 1400}%
\special{fp}%
}}%
%
{\color[named]{Black}{%
\special{pn 8}%
\special{ar 1600 600 400 100  0.0000000 6.2831853}%
}}%
%
{\color[named]{Black}{%
\special{pn 4}%
\special{ar 1600 1400 400 100  6.2831853 6.2831853}%
\special{ar 1600 1400 400 100  0.0000000 3.1415927}%
}}%
\put(14.0000,-11.0000){\makebox(0,0)[rt]{$\sb{U_i}$}}%
\put(9.8000,-13.0000){\makebox(0,0)[rt]{$\sb{V_i}$}}%
\put(18.0000,-9.0000){\makebox(0,0)[lb]{$\sb{f(N(L_i^1))}$}}%
\put(18.0000,-12.0000){\makebox(0,0)[lb]{$\sb{f(N(L_i^0))}$}}%
\put(16.0000,-14.0000){\makebox(0,0){$C_{3,1}^{(1)}$}}%
%
{\color[named]{Black}{%
\special{pn 4}%
\special{pa 400 400}%
\special{pa 780 780}%
\special{fp}%
}}%
%
{\color[named]{Black}{%
\special{pn 4}%
\special{pa 2400 400}%
\special{pa 2780 780}%
\special{fp}%
}}%
%
{\color[named]{Black}{%
\special{pn 4}%
\special{ar 1600 1400 400 100  3.1415927 3.1895927}%
\special{ar 1600 1400 400 100  3.3335927 3.3815927}%
\special{ar 1600 1400 400 100  3.5255927 3.5735927}%
\special{ar 1600 1400 400 100  3.7175927 3.7655927}%
\special{ar 1600 1400 400 100  3.9095927 3.9575927}%
\special{ar 1600 1400 400 100  4.1015927 4.1495927}%
\special{ar 1600 1400 400 100  4.2935927 4.3415927}%
\special{ar 1600 1400 400 100  4.4855927 4.5335927}%
\special{ar 1600 1400 400 100  4.6775927 4.7255927}%
\special{ar 1600 1400 400 100  4.8695927 4.9175927}%
\special{ar 1600 1400 400 100  5.0615927 5.1095927}%
\special{ar 1600 1400 400 100  5.2535927 5.3015927}%
\special{ar 1600 1400 400 100  5.4455927 5.4935927}%
\special{ar 1600 1400 400 100  5.6375927 5.6855927}%
\special{ar 1600 1400 400 100  5.8295927 5.8775927}%
\special{ar 1600 1400 400 100  6.0215927 6.0695927}%
\special{ar 1600 1400 400 100  6.2135927 6.2615927}%
}}%
%
{\color[named]{Black}{%
\special{pn 4}%
\special{pa 400 400}%
\special{pa 2400 400}%
\special{fp}%
}}%
%
{\color[named]{Black}{%
\special{pn 4}%
\special{pa 780 780}%
\special{pa 2800 780}%
\special{fp}%
}}%
%
{\color[named]{Black}{%
\special{pn 4}%
\special{pa 1200 1400}%
\special{pa 1200 780}%
\special{fp}%
}}%
%
{\color[named]{Black}{%
\special{pn 4}%
\special{pa 1200 800}%
\special{pa 1200 600}%
\special{dt 0.027}%
}}%
%
{\color[named]{Black}{%
\special{pn 4}%
\special{pa 2000 800}%
\special{pa 2000 600}%
\special{dt 0.027}%
}}%
%
{\color[named]{Black}{%
\special{pn 4}%
\special{pa 400 1200}%
\special{pa 800 1600}%
\special{fp}%
}}%
%
{\color[named]{Black}{%
\special{pn 4}%
\special{pa 2400 1200}%
\special{pa 2800 1600}%
\special{fp}%
}}%
%
{\color[named]{Black}{%
\special{pn 4}%
\special{pa 800 1600}%
\special{pa 2800 1600}%
\special{fp}%
}}%
%
{\color[named]{Black}{%
\special{pn 4}%
\special{pa 400 1200}%
\special{pa 1000 1200}%
\special{fp}%
}}%
%
{\color[named]{Black}{%
\special{pn 4}%
\special{pa 2200 1200}%
\special{pa 2400 1200}%
\special{fp}%
}}%
\put(26.0000,-12.0000){\makebox(0,0){$C_{3,1}^{(2)}$}}%
\put(16.0000,-3.0000){\makebox(0,0){$C_{3,1}^{(3)}$}}%
%
{\color[named]{Black}{%
\special{pn 4}%
\special{ar 1600 1000 400 100  6.2831853 6.2831853}%
\special{ar 1600 1000 400 100  0.0000000 3.1415927}%
}}%
%
{\color[named]{Black}{%
\special{pn 4}%
\special{ar 1600 1000 400 100  3.1415927 3.1895927}%
\special{ar 1600 1000 400 100  3.3335927 3.3815927}%
\special{ar 1600 1000 400 100  3.5255927 3.5735927}%
\special{ar 1600 1000 400 100  3.7175927 3.7655927}%
\special{ar 1600 1000 400 100  3.9095927 3.9575927}%
\special{ar 1600 1000 400 100  4.1015927 4.1495927}%
\special{ar 1600 1000 400 100  4.2935927 4.3415927}%
\special{ar 1600 1000 400 100  4.4855927 4.5335927}%
\special{ar 1600 1000 400 100  4.6775927 4.7255927}%
\special{ar 1600 1000 400 100  4.8695927 4.9175927}%
\special{ar 1600 1000 400 100  5.0615927 5.1095927}%
\special{ar 1600 1000 400 100  5.2535927 5.3015927}%
\special{ar 1600 1000 400 100  5.4455927 5.4935927}%
\special{ar 1600 1000 400 100  5.6375927 5.6855927}%
\special{ar 1600 1000 400 100  5.8295927 5.8775927}%
\special{ar 1600 1000 400 100  6.0215927 6.0695927}%
\special{ar 1600 1000 400 100  6.2135927 6.2615927}%
}}%
%
{\color[named]{Black}{%
\special{pn 8}%
\special{ar 1600 1400 600 150  5.4977871 6.2831853}%
\special{ar 1600 1400 600 150  0.0000000 3.9269908}%
}}%
%
{\color[named]{Black}{%
\special{pn 8}%
\special{ar 1600 600 600 150  0.0000000 6.2831853}%
}}%
%
{\color[named]{Black}{%
\special{pn 4}%
\special{pa 1000 1400}%
\special{pa 1000 780}%
\special{fp}%
}}%
%
{\color[named]{Black}{%
\special{pn 8}%
\special{pa 1000 750}%
\special{pa 1000 600}%
\special{dt 0.045}%
}}%
%
{\color[named]{Black}{%
\special{pn 8}%
\special{pa 2200 750}%
\special{pa 2200 600}%
\special{dt 0.045}%
}}%
%
{\color[named]{Black}{%
\special{pn 4}%
\special{pa 2200 1400}%
\special{pa 2200 780}%
\special{fp}%
}}%
\end{picture}%
\caption{The places in which $\bfx_4$ is ($\vec{x}\in C_{3,1}^{(l)}$, $l=1,2,3$)}
\label{fig:conf4}
\end{figure}
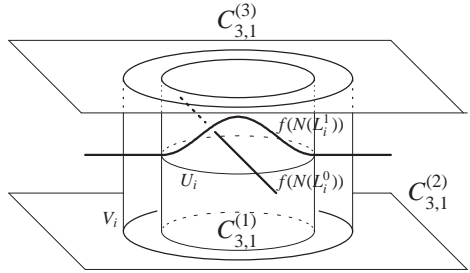
Let $C_{3,1}^{(1)}(f)$ be the subspace of $C_{3,1}(f)$ consisting of $\vec{x}\in C_{3,1}(f)$ with $\bfx_4\in V_i:=V_i'\times[-1,1]$ for some $i\in S$.
Now we compare the integrals of $\omega_Y$ over $C^{(1)}_{3,1}(f)$ and $C^{(1)}_{3,1}(f_S)$.
Since $\vol_{S^{6k-1}}$ is Dirac-type, we only need to consider the set of $\vec{x}$\,'s with $\bfx_1,\bfx_2,\bfx_3\in f^{-1}(V_i)$ for the same $i$ as for $\bfx_4$, since $\omega_Y$ vanishes on other $\vec{x}$\,'s.
The local diffeomorphism $\Phi\colon C_{3,1}^{(1)}(f)\to C_{3,1}^{(1)}(f_S)$,
\[
 \Phi((\bfx_1,\bfx_2,\bfx_3);\bfx_4):=((\bfx_1,\bfx_2,\bfx_3);\iota(\bfx_4)),
\]
reverses the orientation, while $\Phi^*\omega_Y=-\omega_Y$ since $\psi^{\theta}_j\circ\Phi=\iota\circ\psi^{\theta}_j$ and $f_S=\iota\circ f$ on $f^{-1}(V_i)$ ($i\in S$) and $\iota^*\vol_{S^{6k-1}}=-\vol_{S^{6k-1}}$ (because $\vol_{S^{6k-1}}$ is anti-invariant).
Thus the integrations of $\omega_Y$ over $C^{(1)}_{3,1}(f)$ and $C^{(1)}_{3,1}(f_S)$ are equal to each other.

Next, consider the subspace $C_{3,1}^{(2)}(f)$ of $C_{3,1}(f)$ consisting of $\vec{x}$ with $\bfx_4\in\R^{6k-1}\times[-1,1]$ but $\bfx_4\not\in V_i$ for any $i\in S$.
Since $\vol_{S^{6k-1}}$ is Dirac-type and since $\bfx_4\not\in V_i$ for any $i\in S$, $\vec{x}\in C^{(2)}_{3,1}(f)$ can non-trivially contribute to $I(Y)$ only if $\bfx_j$ ($j=1,2,3$) is not contained in any $L_i^1$ ($i\in S$).
Since $f=f_S$ outside $L_i^1$ ($i\in S$), there is no difference between $\omega_Y$'s on $C_{3,1}^{(2)}(f)$ and $C_{3,1}^{(2)}(f_S)$, and the integrals of $\omega_Y$ over $C_{3,1}^{(2)}(f)$ and $C_{3,1}^{(2)}(f_S)$ are also the same.

Finally, consider the subspace $C_{3,1}^{(3)}(f)$ of $C_{3,1}(f)$ consisting of $\vec{x}$ with $\bfx_4\not\in\R^{6k-1}\times[-1,1]$.
If $\delta>0$ is small enough, then under the diffeomorphism $C_{3,1}^{(3)}(f)\to C_{3,1}^{(3)}(f_S)$ given by $\vec{x}\mapsto\vec{x}$, the differences between the vectors $\psi^{\theta}_i(\vec{x})$ ($i=1,2,3$) are small.
This is because $f$ differs from $f_S$ only near $N(L_i^1)$ and the difference is small relative to $\abs{\bfx_4}$.
Thus the difference between the integrals of $\omega_Y$ over $C^{(3)}_{3,1}(f)$ and $C^{(3)}_{3,1}(f_S)$ converges to $0$ in the limit $\delta\to 0$.
\end{proof}

\begin{proof}[Proof of Theorem~\ref{thm:main1}]
Any $f$ with $p\circ f$ generic can be transformed by an ambient isotopy of $\R^{4k-1}$ so that $f$ satisfies the condition in Proposition~\ref{prop:I(X)} without changing the isotopy class of $\bigcup L_i^{\epsilon}$.
Thus we may assume that $f$ satisfies the condition in Proposition~\ref{prop:I(X)}.
Notice
\[
 \calH(f)-\calH(f_S)=\frac{I(X)(f)-I(X)(f_S)}{2}+\frac{I(Y)(f)-I(Y)(f_S)}6+(c(f)-c(f_S)).
\]
The left-hand side does not depend on $\delta$.
In the limit $\delta\to 0$ ($p\circ f$ remains unchanged), the first term of the right-hand side is computed in Proposition~\ref{prop:I(X)} and gives the right-hand side of \eqref{eq:main1}.
The second and the third terms converge to zero by Lemma~\ref{lem:I(Y)}.
\end{proof}

\begin{proof}[Proof of Theorem~\ref{thm:order2}]
Choose three components $A_1,A_2,A_3$ out of $A=A_1\sqcup\dotsb\sqcup A_m$, $m\ge 3$.
Let $W_T(\calH)(f):=\calH(f_T)-\calH(f_{T\cup\{1\}})$ for any $T\subset\{2,3\}$.
Then by \eqref{eq:main2},
\[
 2W_T(\calH)(f)=\sum_{j\ne 1;\,\epsilon,\epsilon'=0,1}(-1)^{\epsilon+\epsilon'}lk(L_1^{\epsilon}(f_T),L_j^{\epsilon'}(f_T)).
\]
Because $L_j^{\epsilon}(f_T)=L_i^{\epsilon+1}(f)$ if $j\in T$ and $L_j^{\epsilon}(f_T)=L_j^{\epsilon}(f)$ otherwise,
\begin{alignat*}{9}
 2W_{\emptyset}(\calH)(f)
 &=\sum_{\epsilon,\epsilon'}(-1)^{\epsilon+\epsilon'}(lk(L_1^{\epsilon},L_2^{\epsilon'})+lk(L_1^{\epsilon},L_3^{\epsilon'}))&	&+\sum_{j\ge 4;\,\epsilon,\epsilon'}(-1)^{\epsilon+\epsilon'}lk(L_1^{\epsilon},L_j^{\epsilon'}),&\\
 2W_{\{2\}}(\calH)(f)
 &=\sum_{\epsilon,\epsilon'}(-1)^{\epsilon+\epsilon'}(-lk(L_1^{\epsilon},L_2^{\epsilon'})+lk(L_1^{\epsilon},L_3^{\epsilon'}))&	&+\sum_{j\ge 4;\,\epsilon,\epsilon'}(-1)^{\epsilon+\epsilon'}lk(L_1^{\epsilon},L_j^{\epsilon'}),&\\
 2W_{\{3\}}(\calH)(f)
 &=\sum_{\epsilon,\epsilon'}(-1)^{\epsilon+\epsilon'}(lk(L_1^{\epsilon},L_2^{\epsilon'})-lk(L_1^{\epsilon},L_3^{\epsilon'}))&	&+\sum_{j\ge 4;\,\epsilon,\epsilon'}(-1)^{\epsilon+\epsilon'}lk(L_1^{\epsilon},L_j^{\epsilon'}),&\\
 2W_{\{2,3\}}(\calH)(f)
 &=\sum_{\epsilon,\epsilon'}(-1)^{\epsilon+\epsilon'}(-lk(L_1^{\epsilon},L_2^{\epsilon'})-lk(L_1^{\epsilon},L_3^{\epsilon'}))&	&+\sum_{j\ge 4;\,\epsilon,\epsilon'}(-1)^{\epsilon+\epsilon'}lk(L_1^{\epsilon},L_j^{\epsilon'}).&
\end{alignat*}
Substituting them into $V_3(\calH)(f)=\sum_{T\subset\{2,3\}}(-1)^{\abs{T}}W_T(\calH)(f)$, we obtain $V_3(\calH)(f)=0$.
\end{proof}
\section{Proof of Theorem~\ref{thm:main2}}\label{s:gen_imm}
\subsection{Well-definedness and invariance of $E$}\label{ss:E_invariant}
Suppose $g\in\Imm{6k-1}{4k-1}$ is generic and liftable, and let $f,f'\in\emb{6k}{4k-1}$ be lifts of $g$.
We can transform $f'$ by an isotopy in the $x_{6k}$-direction (without changing $p\circ f'$) so that $f'=f_S$ for some index set $S$ of the self-intersection of $g$.
Then \eqref{eq:main1} implies that $E(g)$ does not depend on the choice of $f$.

Let $g_t\in\Imm{6k-1}{4k-1}$ ($t\in[0,1]$) be a generic regular homotopy with each $g_t$ liftable.
We show that, for any $t_0\in[0,1]$, $g_t$ can be lifted to an isotopy $f_t\in\emb{6k}{4k-1}$---namely, $g_t=p\circ f_t$---in an open neighborhood of $t_0$.
Let $f_{t_0}$ be a lift of $g_{t_0}$.
Then $f_{t_0}$ can be written as $f_{t_0}=(g_{t_0},h)$ by using some $h\colon\R^{4k-1}\to\R$.
Define $G_t\colon\Conf^{\circ}_2(\R^{4k-1})\to\R^{6k-1}$ by
\[
 G_t(\bfx,\bfy):=g_t(\bfx)-g_t(\bfy).
\]
The first projection $\Conf^{\circ}_2(\R^{4k-1})\to\R^{4k-1}$ restricts to a diffeomorphism $G^{-1}_t(\zero)\cong g_t^{-1}(A_t)$, where $A_t$ is the self-intersection of $g_t$.
Since $g_t$ is a generic regular homotopy, $G^{-1}_t(\zero)$ gives an isotopy of a closed submanifold of $\Conf^{\circ}_2(\R^{4k-1})$.
Because $f_{t_0}\in\emb{6k}{4k-1}$, there exists an open neighborhood $W$ of $G^{-1}_{t_0}(\zero)$ such that $h(\bfx)\ne h(\bfy)$ for any $(\bfx,\bfy)\in W$.
The compactness of $G^{-1}_t(\zero)$ (for any $t$) implies that there exists $\epsilon>0$ such that $G^{-1}_t(\zero)\subset W$ for $\abs{t-t_0}<\epsilon$.
Then $f_t:=(g_t,h)\colon\R^{4k-1}\to\R^{6k}$ is in $\emb{6k}{4k-1}$ for $\abs{t-t_0}<\epsilon$ and is a lift of $g_t$.

Because $\calH(f_t)$ is constant and the linking part of \eqref{eq:E=H-link} is invariant unless the double point set varies, $E(g_t)$ is also constant.
Thus for any $t_0$ there exists $\epsilon>0$ such that $E(g_t)$ is constant on $(t_0-\epsilon,t_0+\epsilon)$, and hence $E(g_t)$ is constant on $[0,1]$.

\begin{remark}
By Proposition~\ref{prop:I(X)} and \eqref{eq:E=H-link}, for a generic liftable $g\in\Imm{6k-1}{4k-1}$,
\begin{equation}\label{eq:E=I(Y)}
 E(g)=\lim_{\delta\to 0}\Bigl(\frac{1}{6}I(Y)(f^{\delta})+c(f^{\delta})+\frac{1}{8}\sum_{i,\epsilon}w(L_i^{\epsilon})\Bigr).
\end{equation}
This gives a geometric interpretation of $I(Y)$ (added by $c$ and the writhes), and is a higher-dimensional analogue to \cite[Definition~5.4]{LinWang96}.
\end{remark}

\subsection{Local models of non-generic self-intersections}
As explained in \cite{Ekholm01,Ekholm01-2}, the set of generic immersions $g\colon\R^{4k-1}\looparrowright\R^{6k-1}$ is an open dense subspace of $\Imm{6k-1}{4k-1}$ and the complement is a stratified hypersurface.
To characterize an invariant of generic immersions, we must study its jumps at non-generic strata.
The codimension $1$ strata (in $\Imm{6k-1}{4k-1}$) consist of immersions with a single generic self-tangency point or a single generic triple point \cite[Lemma~3.4]{Ekholm01-2}.
The local picture of the \emph{versal deformation} \cite[\S5.3]{Ekholm01}, \cite[\S3.2]{Ekholm01-2} of an immersion with a self-tangency or a triple point is given in \cite{Ekholm01, Ekholm01-2}.

\begin{proposition}[{\cite[Lemma~3.5]{Ekholm01-2}}]\label{prop:tangency}
Let $g_0\in\Imm{6k-1}{4k-1}$ be an immersion with a single generic self-tangency point.
Then the versal deformation $g_t$ of $g_0$ is constant far from the self-tangency point, and in some local coordinates near the self-tangency point, $g_t$ is given by
\begin{alignat*}{8}
  g_t(\bfx)&=(x_1,\dotsc,x_{2k},\,& &x_{2k+1},\,& \dotsc,\, &x_{4k-1},\,& &0,                      & &0,         & \dotsc,\,&0),&\\
  g_t(\bfy)&=(y_1,\dotsc,y_{2k},\,& &0,\,       & \dotsc,\, &0,\,       & &Q(y_1,\dots,y_{2k})+t,\,& &y_{2k+1},\,& \dotsc,\,&y_{4k-1}),&
\end{alignat*}
where $Q$ is a non-degenerate quadratic form on $2k$ variables.
\end{proposition}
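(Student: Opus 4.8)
The plan is to reduce the statement to a local normal-form (finite-determinacy) computation in singularity theory, treating the two sheets of $g_0$ that meet at the self-tangency point and exploiting the fact that a self-tangency is the generic \emph{codimension-one} degeneration of a transverse double point. First I would straighten one sheet. Near the self-tangency point $q=g_0(p_1)=g_0(p_2)$ the immersion has two local sheets with tangent planes $T_1,T_2\subset\R^{6k-1}$, each of dimension $4k-1$. For a transverse double point $\dim(T_1\cap T_2)=2k-1$, whereas the self-tangency condition is exactly that this intersection jumps to dimension $2k$, so that $T_1+T_2$ has dimension $6k-2$, i.e. is a hyperplane. Choosing linear coordinates on $\R^{6k-1}$ adapted to the flag $T_1\cap T_2\subset T_1,T_2\subset T_1+T_2\subset\R^{6k-1}$ and then applying a diffeomorphism of the target carrying the first sheet onto a coordinate plane, I may assume the first sheet is $g_0(\bfx)=(x_1,\dots,x_{4k-1},0,\dots,0)$, with the common directions $T_1\cap T_2$ occupying coordinates $1,\dots,2k$ and the distinguished normal direction of $T_1+T_2$ being the $4k$-th.

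Next I analyze the second sheet. Writing $g_0(\bfy)=(h(\bfy),\nu(\bfy))$ with $h\colon(\R^{4k-1},0)\to(\R^{4k-1},0)$ the component in the first sheet and $\nu\colon(\R^{4k-1},0)\to(\R^{2k},0)$ the component normal to it (coordinates $4k,\dots,6k-1$), the local double-point set is $\nu^{-1}(0)$, and $\ker d\nu_0=T_1\cap T_2$, so that the corank of $d\nu_0$ equals $\dim(T_1\cap T_2)-(2k-1)$, which is $1$ precisely at a self-tangency. Choosing source coordinates $(u,v)$ with $u=(y_1,\dots,y_{2k})\in\ker d\nu_0$ and $v=(y_{2k+1},\dots,y_{4k-1})$ complementary, the submersion normal form turns the $2k-1$ submersive components of $\nu$ into $v$, so coordinates $4k+1,\dots,6k-1$ become $y_{2k+1},\dots,y_{4k-1}$. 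After subtracting a suitable function of these normalized components, the remaining component $\nu_{4k}=\phi(u,v)$ has $d\phi_0=0$, and its Hessian in the $u$-directions is a quadratic form $Q$; the \emph{generic} self-tangency condition is exactly that $Q$ is non-degenerate. The Morse lemma with parameters (a splitting-lemma reparametrization of the $u$-directions depending on $v$, followed by one more subtraction of a function of $v$) then removes both the $v$-dependence and the higher-order terms, giving $\nu_{4k}=Q(y_1,\dots,y_{2k})$ exactly. A final compatible reparametrization straightens $h$ to $(y_1,\dots,y_{2k},0,\dots,0)$, yielding the stated form of $g_0$ (the case $t=0$).

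Finally I would address versality. Since a self-tangency is a codimension-one phenomenon in $\Imm{6k-1}{4k-1}$, the normal direction to the corresponding stratum is one-dimensional, spanned by the translation of the second sheet along the distinguished normal coordinate $x_{4k}$; hence a one-parameter unfolding suffices, and the universal unfolding of the contact singularity $Q$ is realized simply by adding a constant $t$ to $\nu_{4k}$. This produces $g_t(\bfy)$ with $4k$-th coordinate $Q(y_1,\dots,y_{2k})+t$ and all other components unchanged. Because the entire reduction is local, the deformation may be taken to agree with $g_0$ outside a neighborhood of the self-tangency point (using a cutoff), so $g_t$ is constant far from it.

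The main obstacle is the finite-determinacy step: verifying that all higher-order corrections can be eliminated by \emph{admissible} coordinate changes---diffeomorphisms of the target that preserve the straightened first sheet, together with source reparametrizations---applied simultaneously to both components $h$ and $\nu$ while keeping the first sheet fixed, and confirming that the single parameter $t$ genuinely spans the normal direction to the self-tangency stratum so that the unfolding is versal. These are standard but technical computations in the style of Mather's theory of stable maps and their unfoldings.
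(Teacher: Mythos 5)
The paper offers no proof of this proposition: it is imported verbatim as \cite[Lemma~3.5]{Ekholm01-2} (see also \cite[\S5.3]{Ekholm01}), so there is no in-paper argument to measure yours against. Your sketch is, in outline, a faithful reconstruction of Ekholm's own proof: adapt coordinates to the flag $T_1\cap T_2\subset T_1,T_2\subset T_1+T_2$, observe that self-tangency is exactly the corank-one degeneration of the normal component $\nu$ of the second sheet, split off the submersive part of $\nu$, and apply the Morse lemma with parameters to the remaining scalar component, nondegeneracy of the Hessian $Q$ being the genericity hypothesis; your dimension counts ($\dim(T_1\cap T_2)=2k$, corank of $d\nu_0$ equal to $1$) are correct. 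Two points, however, are asserted rather than proved, and they are where the real content lies. First, the ``final compatible reparametrization'' straightening $h$ to $(y_1,\dotsc,y_{2k},0,\dotsc,0)$ is not automatic: the source diffeomorphism of the second branch has already been spent normalizing $\nu$, and the admissible target diffeomorphisms must preserve both the straightened first sheet and the normal form of $\nu$; one must check (for instance by first shearing the target so that a complement of $T_1\cap T_2$ in $T_2$ lies inside the normal space, which forces $\ker dh_0$ to be the $v$-directions, and then absorbing the higher-order terms of $h$ with the residual freedom) that this can be done simultaneously. Second, versality is not merely ``adding $t$'': one needs a jet-transversality argument showing that the self-tangency locus is a codimension-one stratum of $\Imm{6k-1}{4k-1}$ whose normal direction at $g_0$ is spanned by translating the second sheet in the $x_{4k}$-direction, so that a generic path crosses it transversally and the crossing is $\mathcal{A}$-equivalent to the stated unfolding. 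Both steps are carried out in \cite{Ekholm01,Ekholm01-2}; as written, your argument is a correct plan whose hardest steps are honestly flagged but not executed.
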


We say a self-tangency point {\em definite} (resp.\ \emph{indefinite}) if the quadratic form $Q$ is definite (resp.\ indefinite).

\begin{proposition}[{\cite[Lemma~3.6]{Ekholm01-2}}]\label{prop:triplepoint}
Let $g_0\in\Imm{6k-1}{4k-1}$ be an immersion with a single generic triple point.
Then the versal deformation $g_t$ of $g_0$ is constant far from the triple point, and in some local coordinates near the triple point, $g_t$ is given by
\begin{alignat*}{11}
  g_t(\bfx)&=(x_1,\,& \dotsc,\, &x_{2k-1},\,& &x_{2k},\,& &x_{2k+1},\,& \dotsc,\, &x_{4k-1},\,& &0,\,       & &0,\,       & \dotsc,\, &0),&\\
  g_t(\bfy)&=(y_1,\,& \dotsc,\, &y_{2k-1},\,& &0,\,     & &0,\,       & \dotsc,\, &0,\,       & &y_{2k},\,  & &y_{2k+1},\,& \dotsc,\, &y_{4k-1}),&\\
  g_t(\bfz)&=(0,\,  & \dotsc,\, &0,\,       & &z_{2k},\,& &z_1,\,     & \dotsc,\, &z_{2k-1},\,& &z_{2k}-t,\,& &z_{2k+1},\,& \dotsc,\, &z_{4k-1}).&
\end{alignat*}
\end{proposition}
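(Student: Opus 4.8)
The plan is to treat Proposition~\ref{prop:triplepoint} as a normal-form statement for the multi-germ of three immersed sheets at a generic triple point, together with the identification of its one-parameter versal unfolding. The three local sheets of $g_0$ through the triple point are embedded $(4k-1)$-disks in $\R^{6k-1}$, so the first task is to straighten them simultaneously to their tangent planes $T_1,T_2,T_3$ at the triple point; the second is to bring the resulting linear configuration into the stated standard form by a linear change of coordinates of the target; and the third is to exhibit $t$ as a versal parameter by checking that translating one sheet transversally destroys the triple point.

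First I would linearize. By a local diffeomorphism of $\R^{6k-1}$ I straighten the first sheet to $\R^{4k-1}\times\{0\}$. Genericity of the triple point is precisely the statement that $T_1,T_2,T_3$ are in general position, which in this dimension means $T_i+T_j=\R^{6k-1}$ for $i\ne j$—so that each double-point set is a $(2k-1)$-plane, of the expected dimension $2(4k-1)-(6k-1)$—and $T_1\cap T_2\cap T_3=\{0\}$, the expected triple-intersection dimension $3(4k-1)-2(6k-1)$ being negative. Because the sheets are pairwise transverse and in general position, each is a graph over its tangent plane and a Thom--Mather type isotopy straightens all three simultaneously without disturbing the earlier normalizations.

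It then remains to normalize three generic $(4k-1)$-planes in $\R^{6k-1}$ by linear algebra. Writing $L_{ij}=T_i\cap T_j$, each $L_{ij}$ is $(2k-1)$-dimensional and, in general position, $L_{12}\oplus L_{13}\oplus L_{23}$ is a direct sum of dimension $6k-3$; I choose a basis of $\R^{6k-1}$ so that $L_{12},L_{13},L_{23}$ are spanned respectively by $e_1,\dots,e_{2k-1}$, by $e_{2k+1},\dots,e_{4k-1}$, and by $e_{4k+1},\dots,e_{6k-1}$. Each $T_i$ is then the direct sum $L_{ij}\oplus L_{ik}$ (for $\{i,j,k\}=\{1,2,3\}$) together with one extra line, and these three extra lines project to pairwise-independent vectors in the remaining $2$-plane $\mathrm{span}(e_{2k},e_{4k})$; a linear change there carries them to $e_{2k}$, $e_{4k}$, and $e_{2k}+e_{4k}$, which reproduces the parametrizations of $\bfx$, $\bfy$, $\bfz$ in the proposition at $t=0$. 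This uses only that any three pairwise-independent lines in a $2$-plane are, after rescaling coordinates, $\mathrm{GL}_2$-equivalent to that standard triple.

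Finally I would identify the versal parameter. A triple point is a codimension-$1$ phenomenon in $\Imm{6k-1}{4k-1}$ by the preceding \cite[Lemma~3.4]{Ekholm01-2}, so its versal unfolding has a single parameter, which I realize by the translation $g_t(\bfz)=g_0(\bfz)-t\,e_{4k}$ appearing in the statement. A direct check shows that all three pairwise double-point sets persist for every $t$, but that the $\bfx\cap\bfz$ locus satisfies $z_{2k}=t$ while the $\bfy\cap\bfz$ locus satisfies $z_{2k}=0$, so the three sheets share a common point only when $t=0$; this transversality of the family to the triple-point stratum is what makes $t$ versal. \textbf{The main obstacle} is not this linear algebra but the rigorous justification of versality: one must verify, through the infinitesimal stability of the stable multi-germ in Mather's sense (or by invoking Ekholm's classification directly), both that one parameter suffices and that the displayed family is a genuinely versal unfolding, and it is here that the general-position hypotheses on the triple point enter in full.
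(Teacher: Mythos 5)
This proposition is not proved in the paper at all: it is imported verbatim from Ekholm (\cite[Lemma~3.6]{Ekholm01-2}), and the surrounding text explicitly defers both the normal form and the notion of versal deformation to \cite{Ekholm01,Ekholm01-2}. So there is no in-paper proof to compare against; your proposal is a reconstruction of the standard argument, and as an outline it is essentially the right one. Your dimension counts are correct ($\dim(T_i\cap T_j)=2k-1$, expected triple-intersection dimension $-1$), the directness of $L_{12}\oplus L_{13}\oplus L_{23}$ follows exactly as you indicate from $T_1\cap T_2\cap T_3=\{0\}$, the three residual lines are pairwise independent in the $2$-dimensional quotient because $T_i+T_j=\R^{6k-1}$, and your verification that the $\bfx$--$\bfz$ and $\bfy$--$\bfz$ double-point loci impose $z_{2k}=t$ and $z_{2k}=0$ respectively (so the triple point exists only at $t=0$) is correct.

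Two points deserve flagging. First, in the linear normalization one cannot quite send the three residual lines independently to $e_{2k}$, $e_{4k}$, $e_{2k}+e_{4k}$ by a change of the quotient $2$-plane alone: the representatives $v_i\in T_i$ are only determined modulo the $L_{ij}$'s, and one must use that freedom to arrange $v_3=v_1+v_2$ \emph{exactly} in $\R^{6k-1}$ (not just modulo $L_{12}\oplus L_{13}\oplus L_{23}$) before defining the linear isomorphism; this works, but it is an extra adjustment your sketch elides. Second, and more substantively, the step you yourself identify as the main obstacle really is the content of Ekholm's lemma: showing that the displayed one-parameter family is a \emph{versal} deformation---i.e.\ that every deformation of $g_0$ in $\Imm{6k-1}{4k-1}$ is, up to left--right equivalence and reparametrization of the parameter, induced from this family---is not implied by the observation that the family meets the triple-point stratum transversally at $t=0$. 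That observation shows the family is not tangent to the stratum, but versality requires the infinitesimal criterion for the multi-germ (stability of the self-transverse triple point plus surjectivity of the initial-velocity map onto the normal space of the stratum), which is precisely what Ekholm establishes and what the present paper's subsequent jump computations (Lemmas~\ref{lem:jump_triple} etc.) silently rely on. Your proof is therefore correct in structure but, as written, reduces to the cited result at exactly the point where the cited result does its work.
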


\subsection{The jump of $E$ at a non-generic liftable immersion}
Suppose that $g_0\in\Imm{6k-1}{4k-1}$ is liftable and has a single generic self-tangency point or a single generic triple point, and let $g_t$ be its versal deformation.

We show that $g_t$ is liftable for $\abs{t}$ small.
Let $f_0=(g_0,h)$ be a lift of $g_0$ and $f_t:=(g_t,h)\in\Imm{6k-1}{4k-1}$.
Similarly to the argument in \S\ref{ss:E_invariant}, choose an open neighborhood $W$ of $G_0^{-1}(\zero)$ such that $h(\bfx)\ne h(\bfy)$ for any $(\bfx,\bfy)\in W$.
Then there exists $\epsilon>0$ such that $G_t^{-1}(\zero)\subset W$ for $\abs{t}<\epsilon$;
this follows from the explicit description of the change of the multiple point set of $g_t$ (see below).
Thus $f_t\in\emb{6k}{4k-1}$ for $\abs{t}<\epsilon$ and is a lift of $g_t$.

By the definition \eqref{eq:E=H-link} of the invariant $E$, its jump $E(g_t)-E(g_{-t})$ ($t\ne 0$) is described by the change of linking numbers of $L^{\epsilon}_i$'s because $\calH(f_t)$ remains unchanged.

\subsubsection{Definite self-tangencies}
First, we study the jump of $E$ at a positive definite self-tangency point (the argument needs no change for the negative definite case).
It is clear from Proposition~\ref{prop:tangency} that in some local coordinate near the tangency point, the double point set is given by
\begin{alignat*}{2}
 K^0&=\{(x_1,\dotsc,x_{2k},\zero^{2k-1})\in\R^{4k-1}\mid x_1^2+\dotsb+x_{2k}^2=-t\}\quad& &\text{in the }\bfx\text{-sheet},\\
 K^1&=\{(y_1,\dotsc,y_{2k},\zero^{2k-1})\in\R^{4k-1}\mid y_1^2+\dotsb+y_{2k}^2=-t\}\quad& &\text{in the }\bfy\text{-sheet},
\end{alignat*}
which is empty when $t>0$ and the trivial link when $t<0$.
This link $K^0\sqcup K^1$ is separated from the other links since each $K^{\epsilon}$ is contained in a small open set that intersects no other components of the double point set.
Thus we have the following.

\begin{lemma}\label{lem:jump_definite}
If $g_0$ has a definite self-tangency point, then $E(g_t)=E(g_{-t})$.
\end{lemma}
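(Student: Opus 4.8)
The plan is to isolate the linking part of \eqref{eq:E=H-link}, since the Haefliger contribution is already under control. By the liftability discussion preceding the lemma, for $|t|$ small the family $f_t=(g_t,h)$ is an isotopy in $\emb{6k}{4k-1}$; hence $\calH(f_t)$ is constant, and in particular $\calH(f_t)=\calH(f_{-t})$. Writing $\Lambda(g):=\sum_{(i,\epsilon)<(j,\epsilon')}(-1)^{\epsilon+\epsilon'}lk(L_i^\epsilon,L_j^{\epsilon'})$ for the linking sum in \eqref{eq:E=H-link}, it follows that
\[
 E(g_t)-E(g_{-t})=-\frac14\bigl(\Lambda(g_t)-\Lambda(g_{-t})\bigr),
\]
so it suffices to prove $\Lambda(g_t)=\Lambda(g_{-t})$.

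I would then pin down exactly which linking terms change. By Proposition~\ref{prop:tangency} and the explicit description of $K^0,K^1$ above, the double point sets of $g_t$ and $g_{-t}$ differ only by the single extra component $K^0\sqcup K^1$, present for one sign of $t$ and empty for the other; every other component $L_j^{\epsilon'}$ and every mutual linking number among them is unchanged. So this new crossing is some $A_i$ whose two sheets are $K^0,K^1$, and $\Lambda(g_t)-\Lambda(g_{-t})$ equals the sum of precisely those terms in which $K^0$ or $K^1$ occurs: the self-term $-lk(K^0,K^1)$ coming from $(i,0)<(i,1)$, together with the cross-terms $(-1)^{\epsilon+\epsilon'}lk(K^\epsilon,L_j^{\epsilon'})$ over the other components.

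Finally I would show every one of these terms is zero. In the \emph{definite} case the preimages $p_1,p_2$ of the tangency point are two distinct points of $\R^{4k-1}$, and $K^0$, $K^1$ are small round $(2k-1)$-spheres sitting in disjoint small balls about $p_1$ and $p_2$, each bounding a round ball inside its own neighborhood and meeting no other component of the double point set. Consequently $K^0\sqcup K^1$ is a split link, so $lk(K^0,K^1)=0$, and each $K^\epsilon$ is unlinked from every $L_j^{\epsilon'}$, so $lk(K^\epsilon,L_j^{\epsilon'})=0$. Therefore $\Lambda(g_t)-\Lambda(g_{-t})=0$ and $E(g_t)=E(g_{-t})$.

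The only real content is the simultaneous vanishing of all these linking numbers, and this rests entirely on \emph{definiteness}: definiteness forces the level set $Q=-t$ to be a round sphere confined to a single small ball, producing a pair that is split both from itself and from everything else. This is exactly the feature that fails in the indefinite case of Lemma~\ref{lem:jump_indefinite}, where the newly born components are genuine quadrics that can link each other and the ambient components, so that $E$ genuinely jumps.
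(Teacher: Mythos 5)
Your proof is correct and follows essentially the same route as the paper: the paper likewise observes that the definite self-tangency only creates or destroys the pair $K^0\sqcup K^1$ of small spheres, each confined to a small open set meeting no other component of the double point set, so that all linking numbers involving them vanish and the linking sum in \eqref{eq:E=H-link} is unchanged while $\calH(f_t)$ stays constant. Your write-up is just a more explicit version of the paper's two-line justification.
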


\subsubsection{Indefinite self-tangencies}
Next, suppose that $g_0$ is liftable and has an indefinite self-tangency point.
Let $0<\lambda<2k$ be the index of $Q$.
By Proposition~\ref{prop:tangency}, in some local coordinates the double point set of $g_t$ near the self-tangency point is given as follows:
\begin{alignat*}{3}
 &\{(x_1,\dotsc,x_{2k},\zero^{2k-1})\in\R^{4k-1}\mid x_1^2+\dotsb+x_{\lambda}^2-x_{\lambda+1}^2-\dotsb-x_{2k}^2=t\}& \ &\text{in the }\bfx\text{-sheet},&\\
 &\{(y_1,\dotsc,y_{2k},\zero^{2k-1})\in\R^{4k-1}\mid y_1^2+\dotsb+y_{\lambda}^2-y_{\lambda+1}^2-\dotsb-y_{2k}^2=t\}& \ &\text{in the }\bfy\text{-sheet}.&
\end{alignat*}
The versal deformation transforms the double point set by a surgery that replaces $S^{\lambda-1}\times D^{2k-\lambda}$ with $D^{\lambda}\times S^{2k-(\lambda+1)}$ (see Figure~\ref{fig:doublept_indef}).
\begin{figure}[htb]
\centering
\input{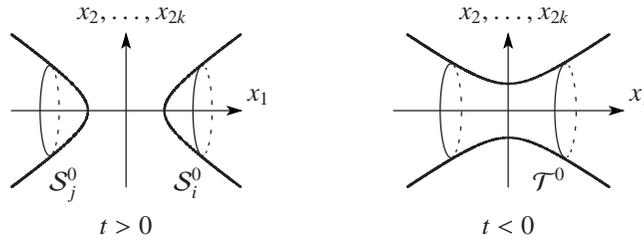}
\caption{The double point set near a self-tangency point of index one}
\label{fig:doublept_indef}
\end{figure}
If $1<\lambda<2k-1$, then this surgery transforms a single component in a small neighborhood of the self-tangency point and changes no linking numbers with other components.
Thus $E(g_t)=E(g_{-t})$.

If $\lambda=1$, then in each sheet the double point set has two components when $t>0$:
in the $\bfx$-sheet
\[
 \calS^0_i:=\left\{x_1= (x_2^2+\dotsb+x_{2k}^2+t)^{1/2}\right\}
 \text{ and }
 \calS^0_j:=\left\{x_1=-(x_2^2+\dotsb+x_{2k}^2+t)^{1/2}\right\},
\]
and in the $\bfy$-sheet
\[
 \calS^1_i:=\left\{y_1= (y_2^2+\dotsb+y_{2k}^2+t)^{1/2}\right\}
 \text{ and }
 \calS^1_j:=\left\{y_1=-(y_2^2+\dotsb+y_{2k}^2+t)^{1/2}\right\}
\]
(Figure~\ref{fig:doublept_indef}, left).
Here we choose $f_t$ so that it maps the $\bfx$-sheet ``below'' the $\bfy$-sheet.
In the $\bfx$-sheet, when $t<0$, $\calS^0_i\sqcup\calS^0_j$ is joined into a single component
\[
 \calT^0:=\{x_1^2-t=x_2^2+\dotsb+x_{2k}^2\}
\]
(Figure~\ref{fig:doublept_indef}, right).
Similarly, in the $\bfy$-sheet $\calS^1_i\sqcup\calS^1_j$ is joined into a single component $\calT^1:=\{y_1^2-t=y_2^2+\dotsb+y_{2k}^2\}$ when $t<0$.

Let $L^{\epsilon}_*$ ($*=i,j$; $\epsilon=0,1$) be the components of the double point set that contains $\calS^{\epsilon}_*$ when $t>0$, and let $K^{\epsilon}$ be the component of the double point set containing $\calT^{\epsilon}$ when $t<0$.

\textbf{Case 1}.
Consider the case $A_i\ne A_j$.
Here the number of the components of the self-intersection decreases by $1$ when $t$ changes from $t>0$ to $-t$.

In this case, two components $L_i^{\epsilon}\sqcup L_j^{\epsilon}$ of the double point set of $g_t$, $t>0$, are joined into a connected double point set $K^{\epsilon}=L_i^{\epsilon}\sharp L_j^{\epsilon}$ of $g_{-t}$ ($\epsilon=0,1$).
Other components $L_m^{\epsilon}$ ($m\ne i,j$; $\epsilon=0,1$) are unchanged.
The jump of the sum of linking numbers is thus
\begin{multline*}
 \sum_{(p,\epsilon)<(q,\epsilon')}(-1)^{\epsilon+\epsilon'}lk(L_p^{\epsilon}(g_t),L_q^{\epsilon'}(g_t))
 -\sum_{(p,\epsilon)<(q,\epsilon')}(-1)^{\epsilon+\epsilon'}lk(L_p^{\epsilon}(g_{-t}),L_q^{\epsilon'}(g_{-t}))\\
 =
  \Bigl(lk(L^0_i,L^0_j)+lk(L^1_i,L^1_j)-\sum_{p,q=i,j}lk(L^0_p,L^1_q)+\sum_{\genfrac{}{}{0pt}{1}{p=i,j;\, m\ne i,j,}{\epsilon,\epsilon'=0,1}}(-1)^{\epsilon+\epsilon'}lk(L^{\epsilon}_p,L_m^{\epsilon'})\Bigr)\\
 -\Bigl(-lk(K^0,K^1)+\sum_{m\ne i,j;\,\epsilon,\epsilon'=0,1}(-1)^{\epsilon+\epsilon'}lk(K^{\epsilon},L_m^{\epsilon'})\Bigr).
\end{multline*}
The connected-sums $K^{\epsilon}=L_i^{\epsilon}\sharp L_j^{\epsilon}$ are taken near the tangency point, and by a small isotopy we may assume that $L_*^{\epsilon}$ ($*=i,j$; $\epsilon=0,1$) satisfy the condition in Lemma~\ref{lem:link_connected_sum}.
Thus by Lemma~\ref{lem:link_connected_sum}
\[
 \sum_{p,q=i,j}lk(L^0_p,L^1_q)=lk(K^0,K^1),\quad
 lk(L^{\epsilon}_i,L_m^{\epsilon'})+lk(L^{\epsilon}_j,L_m^{\epsilon'})=lk(K^{\epsilon},L_m^{\epsilon'}).
\]
By the above three equations, we have
\[
 E(g_t)-E(g_{-t})=\pm\frac{lk(L^0_i,L^0_j)+lk(L^1_i,L^1_j)}{4}.
\]

\textbf{Case 2}.
Consider the case when the versal deformation does not change the number of the components of the self-intersection---namely, $k>1$ and $A_i=A_j$.
Since $L_i^{\epsilon}=L_j^{\epsilon}$ turns into $K^{\epsilon}$ and other components $L_m^{\epsilon}$ ($m\ne i$; $\epsilon=0,1$) are unchanged, $E(g_t)=E(g_{-t})$ follows from the same argument as in the case $1<\lambda<2k-1$.

\textbf{Case 3}.
When $k=1$ and a negative self-tangency occurs at $t=0$ in $A_i=A_j$ (namely, two arcs get tangent to each other with opposite velocity vectors; by Lemma~\ref{lem:orientation} no positive self-tangency occurs), the number of the components of the self-intersection increases by $1$ when $t$ changes from $t>0$ to $-t$.
This case is similar to Case 1.

The case $\lambda={2k}-1$ is similar, and we have the following.

\begin{lemma}\label{lem:jump_indefinite}
Suppose that $g_0$ has an indefinite self-tangency point.
If the index of $Q$ is $1$ or $2k-1$, and if the versal deformation changes the number of the components of self-intersection, then
\[
 E(g_t)-E(g_{-t})=\pm\frac{lk(L_i^0,L_j^0)+lk(L_i^1,L_j^1)}{4},
\]
where $L_i^0\sqcup L_i^1=g_t^{-1}(A_i)$ and $L_j^0\sqcup L_j^1=g_t^{-1}(A_j)$ are the double point set corresponding to $A_i$ and $A_j$, the distinct components of the self-intersection of $g_t$ that are joined into a single component after the versal deformation.
Otherwise, $E(g_t)=E(g_{-t})$.
\end{lemma}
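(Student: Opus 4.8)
The plan is to reduce everything to the behavior of the linking sum $\frac14\sum_{(i,\epsilon)<(j,\epsilon')}(-1)^{\epsilon+\epsilon'}lk(L_i^{\epsilon},L_j^{\epsilon'})$ appearing in \eqref{eq:E=H-link}. Since the lift $f_t$ constructed above is an isotopy of embeddings for $\abs{t}<\epsilon$, the Haefliger invariant $\calH(f_t)$ is constant, so by \eqref{eq:E=H-link} the jump $E(g_t)-E(g_{-t})$ equals $-\tfrac14$ times the jump of this linking sum. Thus the only thing to analyze is how the double point set, and with it the pairwise linking numbers, is modified as $t$ passes through $0$.

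First I would invoke Proposition~\ref{prop:tangency} to write the double point set near the self-tangency, in each sheet, as the affine quadric $\{x_1^2+\dots+x_{\lambda}^2-x_{\lambda+1}^2-\dots-x_{2k}^2=t\}\subset\R^{2k}\times\{\zero\}$, where $\lambda$ is the index of $Q$. The versal deformation performs the surgery replacing $S^{\lambda-1}\times D^{2k-\lambda}$ by $D^{\lambda}\times S^{2k-\lambda-1}$; the quadric stays connected for both signs of $t$ precisely when $2\le\lambda\le 2k-2$, and breaks into two pieces (on the side containing the $S^0$ factor) exactly when $\lambda=1$ or $\lambda=2k-1$. This isolates the only cases in which the number of components of the self-intersection can change. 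For $1<\lambda<2k-1$, and for $\lambda\in\{1,2k-1\}$ when the two local branches lie in one global component so the count is unchanged, each sheet $L^{\epsilon}$ is modified inside a contractible ball $B\subset\R^{4k-1}$ disjoint from every other component and from the opposite sheet. Here I would argue that such a surgery preserves all linking numbers: the difference cycle $L^{\epsilon}(g_t)-L^{\epsilon}(g_{-t})$ is a closed $(2k-1)$-cycle supported in the contractible $B$, hence null-homologous in $\R^{4k-1}\setminus N$ for any component $N$ disjoint from $B$, so $[L^{\epsilon}]\in H_{2k-1}(\R^{4k-1}\setminus N)$, and therefore $lk(L^{\epsilon},N)$, is unchanged. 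This yields the ``otherwise'' clause $E(g_t)=E(g_{-t})$.

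The substantive case is $\lambda=1$ (the case $\lambda=2k-1$ being symmetric, and $k=1$ being its low-dimensional instance) with $A_i\ne A_j$, so that two crossings merge and $K^{\epsilon}=L_i^{\epsilon}\sharp L_j^{\epsilon}$. I would compute the jump of the linking sum directly, as in Case~1 above: isolate the block of terms built from $\{i,j\}$, namely the two diagonal terms $lk(L_i^0,L_i^1)$, $lk(L_j^0,L_j^1)$ and the four mixed terms $lk(L_i^{\epsilon},L_j^{\epsilon'})$ on the $t>0$ side, against the single diagonal term $lk(K^0,K^1)$ on the $t<0$ side, together with the cross terms paired against every other component $L_m^{\epsilon'}$. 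Lemma~\ref{lem:link_connected_sum} supplies both $lk(K^0,K^1)=\sum_{p,q\in\{i,j\}}lk(L_p^0,L_q^1)$ and $lk(L_i^{\epsilon},L_m^{\epsilon'})+lk(L_j^{\epsilon},L_m^{\epsilon'})=lk(K^{\epsilon},L_m^{\epsilon'})$; inserting the signs $(-1)^{\epsilon+\epsilon'}$, every term cancels except the two diagonal ($\epsilon=\epsilon'$) mixed terms, leaving exactly $lk(L_i^0,L_j^0)+lk(L_i^1,L_j^1)$, whence the factor $\tfrac14$ and the $\pm$ recording which side carries the merged component.

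I expect the main obstacle to be twofold. To apply Lemma~\ref{lem:link_connected_sum} one must form the connected sum $K^{\epsilon}=L_i^{\epsilon}\sharp L_j^{\epsilon}$ so that the projected intersection counts $\abs{p(L_p^{\epsilon})\cap p(L_q^{\epsilon'})}$ do not increase; this is where I would use that the sum is taken inside the small neighborhood of the tangency point and adjust by a small isotopy so that \eqref{eq:linking_number} applies. The genuinely delicate point, however, is the sign bookkeeping: keeping straight the orientations of the $L_i^{\epsilon}$ and of the connected sums (via Lemma~\ref{lem:orientation}), the signs $(-1)^{\epsilon+\epsilon'}$, and the global orientation of the jump, so that the diagonal and opposite-$\epsilon$ contributions cancel cleanly and only the asserted combination survives with a well-defined $\pm$.
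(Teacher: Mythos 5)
Your proposal is correct and follows essentially the same route as the paper: reduce the jump of $E$ to the jump of the linking sum using the constancy of $\calH(f_t)$ along the local lift, read off the surgery on the double point set from the local model in Proposition~\ref{prop:tangency} to isolate the index $1$ (or $2k-1$) cases where components merge or split, and then cancel all terms except $lk(L_i^0,L_j^0)+lk(L_i^1,L_j^1)$ via Lemma~\ref{lem:link_connected_sum}, exactly as in the paper's Case~1. Your homological justification that a surgery supported in a small ball leaves all relevant linking numbers unchanged is a slightly more explicit version of what the paper merely asserts, but it is not a different argument.
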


\subsubsection{Triple points}
Suppose $g_0$ is liftable and has a triple point.
By Proposition~\ref{prop:triplepoint}, in some local coordinates, the double point set near the triple point in the $\bfx$-sheet is given by
\begin{alignat*}{10}
 \calS^1_i&:=\{(x_1,& &\dotsc,\,& &x_{2k-1},\,& &0,\,& &0,       & &\dotsc,\,& &0)\}       & &=+\R^{2k-1}\times\{\zero\}^{2k},&\\
 \calS^1_j&:=\{(0,  & &\dotsc,\,& &0,       \,& &t,\,& &x_{2k+1},& &\dotsc,\,& &x_{4k-1})\}& &=\{\zero\}^{2k-1}\times\{t\}\times(+\R^{2k-1})&
\end{alignat*}
as oriented manifolds (see Figure~\ref{fig:triplepoint}).
\begin{figure}
\centering
\unitlength 0.1in
\begin{picture}( 42.0000, 10.5000)(  6.0000,-15.3500)
%
{\color[named]{Black}{%
\special{pn 4}%
\special{pa 800 1000}%
\special{pa 1600 1000}%
\special{fp}%
\special{sh 1}%
\special{pa 1600 1000}%
\special{pa 1534 980}%
\special{pa 1548 1000}%
\special{pa 1534 1020}%
\special{pa 1600 1000}%
\special{fp}%
}}%
%
{\color[named]{Black}{%
\special{pn 4}%
\special{pa 1200 1400}%
\special{pa 1200 600}%
\special{fp}%
\special{sh 1}%
\special{pa 1200 600}%
\special{pa 1180 668}%
\special{pa 1200 654}%
\special{pa 1220 668}%
\special{pa 1200 600}%
\special{fp}%
}}%
%
{\color[named]{Black}{%
\special{pn 13}%
\special{pa 1360 840}%
\special{pa 900 1300}%
\special{fp}%
\special{sh 1}%
\special{pa 900 1300}%
\special{pa 962 1268}%
\special{pa 938 1262}%
\special{pa 934 1240}%
\special{pa 900 1300}%
\special{fp}%
}}%
%
{\color[named]{Black}{%
\special{pn 13}%
\special{pa 1400 1290}%
\special{pa 1400 600}%
\special{fp}%
\special{sh 1}%
\special{pa 1400 600}%
\special{pa 1380 668}%
\special{pa 1400 654}%
\special{pa 1420 668}%
\special{pa 1400 600}%
\special{fp}%
}}%
%
{\color[named]{Black}{%
\special{pn 13}%
\special{pa 1440 760}%
\special{pa 1500 700}%
\special{fp}%
}}%
\put(14.5000,-12.0000){\makebox(0,0)[lt]{$\mathcal{S}_j^1$}}%
\put(8.0000,-10.5000){\makebox(0,0)[lt]{$\mathcal{S}_i^1$}}%
%
{\color[named]{Black}{%
\special{pn 4}%
\special{pa 2400 1000}%
\special{pa 3200 1000}%
\special{fp}%
\special{sh 1}%
\special{pa 3200 1000}%
\special{pa 3134 980}%
\special{pa 3148 1000}%
\special{pa 3134 1020}%
\special{pa 3200 1000}%
\special{fp}%
}}%
%
{\color[named]{Black}{%
\special{pn 4}%
\special{pa 2800 1400}%
\special{pa 2800 600}%
\special{fp}%
\special{sh 1}%
\special{pa 2800 600}%
\special{pa 2780 668}%
\special{pa 2800 654}%
\special{pa 2820 668}%
\special{pa 2800 600}%
\special{fp}%
}}%
%
{\color[named]{Black}{%
\special{pn 13}%
\special{pa 3100 700}%
\special{pa 2500 1300}%
\special{fp}%
\special{sh 1}%
\special{pa 2500 1300}%
\special{pa 2562 1268}%
\special{pa 2538 1262}%
\special{pa 2534 1240}%
\special{pa 2500 1300}%
\special{fp}%
}}%
%
{\color[named]{Black}{%
\special{pn 13}%
\special{pa 2600 1260}%
\special{pa 2600 1400}%
\special{fp}%
\special{sh 1}%
\special{pa 2600 1400}%
\special{pa 2620 1334}%
\special{pa 2600 1348}%
\special{pa 2580 1334}%
\special{pa 2600 1400}%
\special{fp}%
}}%
%
{\color[named]{Black}{%
\special{pn 13}%
\special{pa 2600 1160}%
\special{pa 2600 700}%
\special{fp}%
}}%
%
{\color[named]{Black}{%
\special{pn 4}%
\special{pa 4000 1000}%
\special{pa 4800 1000}%
\special{fp}%
\special{sh 1}%
\special{pa 4800 1000}%
\special{pa 4734 980}%
\special{pa 4748 1000}%
\special{pa 4734 1020}%
\special{pa 4800 1000}%
\special{fp}%
}}%
%
{\color[named]{Black}{%
\special{pn 4}%
\special{pa 4400 1160}%
\special{pa 4400 600}%
\special{fp}%
\special{sh 1}%
\special{pa 4400 600}%
\special{pa 4380 668}%
\special{pa 4400 654}%
\special{pa 4420 668}%
\special{pa 4400 600}%
\special{fp}%
}}%
%
{\color[named]{Black}{%
\special{pn 4}%
\special{pa 4700 700}%
\special{pa 4100 1300}%
\special{fp}%
\special{sh 1}%
\special{pa 4100 1300}%
\special{pa 4162 1268}%
\special{pa 4138 1262}%
\special{pa 4134 1240}%
\special{pa 4100 1300}%
\special{fp}%
}}%
%
{\color[named]{Black}{%
\special{pn 13}%
\special{pa 4800 800}%
\special{pa 4300 1300}%
\special{fp}%
\special{sh 1}%
\special{pa 4300 1300}%
\special{pa 4362 1268}%
\special{pa 4338 1262}%
\special{pa 4334 1240}%
\special{pa 4300 1300}%
\special{fp}%
}}%
%
{\color[named]{Black}{%
\special{pn 4}%
\special{pa 4400 1400}%
\special{pa 4400 1240}%
\special{fp}%
}}%
%
{\color[named]{Black}{%
\special{pn 13}%
\special{pa 4400 1240}%
\special{pa 4400 1360}%
\special{fp}%
\special{sh 1}%
\special{pa 4400 1360}%
\special{pa 4420 1294}%
\special{pa 4400 1308}%
\special{pa 4380 1294}%
\special{pa 4400 1360}%
\special{fp}%
}}%
%
{\color[named]{Black}{%
\special{pn 13}%
\special{pa 4400 800}%
\special{pa 4400 1160}%
\special{fp}%
}}%
\put(30.8000,-7.3000){\makebox(0,0)[lt]{$\mathcal{S}_i^0$}}%
\put(25.5000,-8.5000){\makebox(0,0)[rb]{$\mathcal{S}_p^1$}}%
\put(14.3000,-9.7000){\makebox(0,0)[lb]{$\sb t$}}%
\put(25.8000,-9.8000){\makebox(0,0)[rb]{$\sb{-t}$}}%
\put(46.3000,-10.3000){\makebox(0,0)[lt]{$\sb t$}}%
\put(48.0000,-8.0000){\makebox(0,0)[lb]{$\mathcal{S}_j^0$}}%
\put(43.7000,-8.7000){\makebox(0,0)[rb]{$\mathcal{S}_p^0$}}%
\put(6.0000,-13.3000){\makebox(0,0)[lt]{$\sb{x_1,\dotsc,x_{2k-1}}$}}%
\put(8.0000,-5.0000){\makebox(0,0)[lt]{$\sb{x_{2k+1},\dotsc,x_{4k-1}}$}}%
\put(17.0000,-10.0000){\makebox(0,0){$\sb{x_{2k}}$}}%
\put(12.0000,-16.0000){\makebox(0,0){highest}}%
\put(24.9000,-13.1000){\makebox(0,0)[rt]{$\sb{y_1,\dotsb,y_{2k-1}}$}}%
\put(40.9000,-13.1000){\makebox(0,0)[rt]{$\sb{z_1,\dotsb,z_{2k-1}}$}}%
\put(33.0000,-10.0000){\makebox(0,0){$\sb{y_{2k}}$}}%
\put(28.0000,-5.5000){\makebox(0,0){$\sb{y_{2k+1},\dotsc,y_{4k-1}}$}}%
\put(44.0000,-5.5000){\makebox(0,0){$\sb{z_{2k+1},\dotsc,z_{4k-1}}$}}%
\put(49.0000,-10.0000){\makebox(0,0){$\sb{z_{2k}}$}}%
\put(28.0000,-16.0000){\makebox(0,0){middle}}%
\put(44.0000,-16.0000){\makebox(0,0){lowest}}%
%
{\color[named]{Black}{%
\special{pn 13}%
\special{pa 4400 800}%
\special{pa 4400 670}%
\special{dt 0.045}%
}}%
\end{picture}%
\caption{Double point sets near the triple point}
\label{fig:triplepoint}
\end{figure}
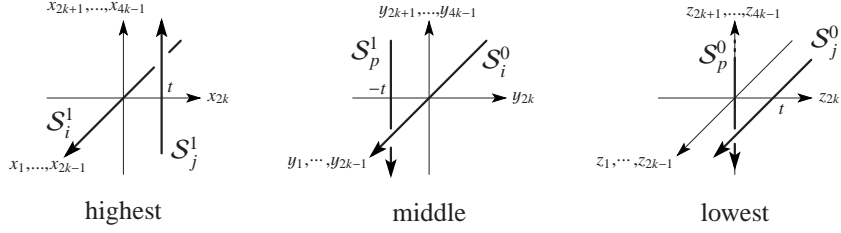
The orientations are direct consequences of Lemma~\ref{lem:orientation}.
Similarly, the double point set in the $\bfy$-sheet is given by
\begin{alignat*}{10}
 \calS^1_p&:=\{(0,  & &\dotsc,\,& &0,\,       & &-t,\,& &y_{2k+1},\,& &\dotsc,\,& &y_{4k-1})\}& &=\{\zero\}^{2k-1}\times\{-t\}\times(-\R^{2k-1}),& \\
 \calS^0_i&:=\{(y_1,& &\dotsc,\,& &y_{2k-1},\,& &0,\, & &0,\,       & &\dotsc,\,& &0)\}& &=+\R^{2k-1}\times\{\zero\}^{2k},&
\end{alignat*}
and in the $\bfz$-sheet by
\begin{alignat*}{10}
 \calS^0_j&:=\{(z_1,& &\dotsc,\,& &z_{2k-1},\,& &t,\,& &0,       & &\dotsc,\,& &0)\}       & &=+\R^{2k-1}\times\{t\}\times\{\zero\}^{2k-1},& \\
 \calS^0_p&:=\{(0,  & &\dotsc,\,& &0,\,       & &0,\,& &z_{2k+1},& &\dotsc,\,& &x_{4k-1})\}& &=\{\zero\}^{2k}\times(-\R^{2k-1}).&
\end{alignat*}
Here, without loss of generality, we assume that the lift $f_t$ of $g_t$ maps the $\bfx$-sheet (in Proposition~\ref{prop:triplepoint}) to the ``highest position,'' the $\bfy$-sheet to the ``middle'' and the $\bfz$-sheet to the ``lowest.''
The following holds by the above descriptions.

\begin{lemma}[{see \cite[Remark~6.2.3]{Ekholm01}}]\label{lem:ori_triple}
The versal deformation changes three crossing $\calS^1_i\sqcup\calS^1_j$, $\calS^0_i\sqcup\calS^1_p$, and $\calS^0_j\sqcup\calS^0_p$, changing their linking numbers or writhes by the common value $\pm 1$ (the signs are same for all the three crossings).
\end{lemma}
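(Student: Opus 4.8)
The plan is to treat each of the three crossings as an ordinary crossing change of $(2k-1)$-dimensional submanifolds in $\R^{4k-1}$ and to compute the three resulting signs from the explicit orientations recorded above. First I would note that, by the interpretation \eqref{eq:linking_number} (resp.\ \eqref{eq:writhe_integral}), the linking number of two disjoint components (resp.\ the writhe of one component) of the double point set is an integer-valued signed count, and that it changes by exactly $\pm1$ whenever the relevant pieces pass transversally through each other once; the sign is the local intersection sign of the passage (a standard property of the linking number, consistent with the count \eqref{eq:linking_number}). So it suffices to verify that each of the three pairs $\calS^1_i\sqcup\calS^1_j$, $\calS^0_i\sqcup\calS^1_p$, $\calS^0_j\sqcup\calS^0_p$ undergoes exactly one such transverse passage as $t$ crosses $0$, and that the three intersection signs coincide.

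Next, reading off Proposition~\ref{prop:triplepoint} and the local models, in each sheet the two pieces are affine $(2k-1)$-planes that meet at the origin precisely when $t=0$ and are disjoint for $t\ne0$: in the $\bfx$-sheet, $\calS^1_i=\R^{2k-1}\times\{\zero\}^{2k}$ sits at $x_{2k}=0$ while $\calS^1_j=\{\zero\}^{2k-1}\times\{t\}\times\R^{2k-1}$ sits at $x_{2k}=t$, and likewise for the $\bfy$- and $\bfz$-sheets. In each case the two positive tangent frames occupy the first $2k-1$ and the last $2k-1$ coordinate directions, so their span is the hyperplane $\{x_{2k}=0\}^\perp$-complement and the one-parameter family sweeps the missing $e_{2k}$-direction; hence the passage is transverse in the parametrized sense and occurs exactly once. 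The relative velocity $w$ of the moving piece is $+e_{2k}$ in the $\bfx$-sheet ($\calS^1_j$ at $x_{2k}=t$), $-e_{2k}$ in the $\bfy$-sheet ($\calS^1_p$ at $y_{2k}=-t$), and $-e_{2k}$ in the $\bfz$-sheet ($\calS^0_j$ at $z_{2k}=t$, with $\calS^0_p$ fixed).

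The crux is the sign computation. For each pair I would form the ordered $(4k-1)$-frame $(\text{positive frame of the first piece},\ \text{positive frame of the second piece},\ w)$, whose orientations are exactly those supplied by Lemma~\ref{lem:orientation} and written out in the local models; the jump of the associated linking number or writhe then equals, up to a global sign independent of the pair, the sign of the determinant of this frame relative to the standard orientation of $\R^{4k-1}$. A direct check shows that the $\bfy$- and $\bfz$-frames are literally the same frame $(e_1,\dotsc,e_{2k-1};\,-(e_{2k+1},\dotsc,e_{4k-1});\,-e_{2k})$, whose determinant is $-1$: the underlying permutation is the $2k$-cycle on the indices $2k,\dotsc,4k-1$, contributing $-1$, while the reversal of the second block and the sign of $w$ together contribute $+1$. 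The $\bfx$-frame $(e_1,\dotsc,e_{2k-1};\,e_{2k+1},\dotsc,e_{4k-1};\,e_{2k})$ has the same underlying $2k$-cycle and no sign reversals, hence determinant $-1$ as well. Thus the three signs agree, which is precisely the assertion that the common value is the same $\pm1$.

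Finally, I would remark that whether the two pieces of a given pair lie on distinct components of the double point set (so that a linking number changes) or on one component (so that the relevant writhe changes) affects neither the local model nor the frame, and hence not the sign; this is why the statement is phrased for ``linking numbers or writhes,'' and the coincidence of the three induced orientations is essentially the content of \cite[Remark~6.2.3]{Ekholm01}. I expect the main obstacle to be exactly this orientation bookkeeping: tracking simultaneously the six induced orientations from Lemma~\ref{lem:orientation} together with the direction in which each sheet moves, and verifying that the three determinants are genuinely \emph{equal} rather than merely equal up to sign.
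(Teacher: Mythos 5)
Your argument is correct and follows the same route as the paper, which gives no proof beyond the assertion that the lemma ``holds by the above descriptions'' of the oriented local models; your determinant bookkeeping is exactly the implicit content of that assertion, and the three signs do all come out equal to $-1$. One caution on the $\bfz$-sheet: the moving piece $\calS^0_j$ has velocity $+e_{2k}$, and the value $-e_{2k}$ you feed into the frame is right only because the jump formula requires the velocity of the \emph{second-listed} piece relative to the first (here $\calS^0_p$ relative to $\calS^0_j$, i.e.\ $w_N-w_M=-e_{2k}$), so the phrase ``relative velocity of the moving piece'' is misleading there even though the frame you write down, and hence the conclusion, is correct.
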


Let $L_*^{\epsilon}$ ($*\in\{i,j,p\}$, $\epsilon\in\{0,1\}$) be the component of the double point set containing $\calS_*^{\epsilon}$.

\textbf{Case 1}.
If all the six components $L_*^{\epsilon}$ are different, then by Lemma~\ref{lem:ori_triple} the versal deformation changes $(-1)^{1+1}lk(L^1_i,L^1_j)+(-1)^{0+1}lk(L_i^0,L_p^1)+(-1)^{0+0}lk(L_j^0,L_p^0)$ by $\pm(1-1+1)=\pm 1$.
Other linking numbers do not change.
Thus $E(g_t)-E(g_{-t})=\pm 1/4$.

\textbf{Case 2}.
If $L_i^{\epsilon}=L_j^{\epsilon}\ne L_p^{\epsilon}$ ($\epsilon=0,1$), then the crossing change at $\calS_i^0\sqcup\calS_p^1$ and $\calS_j^0\sqcup\calS_p^0$ changes $(-1)^{0+1}lk(L_i^0,L_p^1)+(-1)^{0+0}lk(L_i^0,L_p^0)$ by $\pm 1\mp 1=0$.
Other linking numbers do not change, and the change of $w(L_i^1)$ (by $\pm 1$) arising from the crossing change at $\calS_i^1\sqcup\calS_j^1$ does not change $E$.
Thus $E(g_t)=E(g_{-t})$.

\textbf{Case 3}.
If $L_p^{\epsilon}=L_i^{\epsilon}\ne L_j^{\epsilon}$ ($\epsilon=0,1$), then the versal deformation changes $lk(L_i^1,L_j^1)-lk(L_i^1,L_i^0)+lk(L_i^0,L_j^0)$ by $\pm(1-1+1)=\pm1$, and $E(g_t)-E(g_{-t})=\pm 1/4$.

\textbf{Case 4}.
The case $L_j^{\epsilon}=L_p^{\epsilon}\ne L_i^{\epsilon}$ is similar to the Case 2 by symmetry, and $E(g_t)=E(g_{-t})$.

\textbf{Case 5}.
If $L_i^{\epsilon}=L_j^{\epsilon}=L_p^{\epsilon}$, then the versal deformation changes $-lk(L_i^0,L_p^1)$ by $\pm 1$, and the changes of $w(L_i^1)$, $w(L_j^0)$ do not affect $E$.
Thus in this case $E(g_t)-E(g_{-t})=\pm1/4$.

Putting them all together, we obtain the following.

\begin{lemma}\label{lem:jump_triple}
Suppose that $g_0$ has a triple point.
Then $E(g_t)=E(g_{-t})$ if $L_i^{\epsilon}=L_j^{\epsilon}\ne L_p^{\epsilon}$ or $L_j^{\epsilon}=L_p^{\epsilon}\ne L_i^{\epsilon}$, $\epsilon=0,1$ (in Figure~\ref{fig:triplepoint}).
Otherwise, $E(g_t)-E(g_{-t})=\pm 1/4$.
\end{lemma}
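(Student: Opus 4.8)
The plan is to reduce the jump $E(g_t)-E(g_{-t})$ to the change of the linking-number sum in \eqref{eq:E=H-link} and then run a short, exhaustive case analysis on how the local double-point pieces $\calS_*^{\epsilon}$ of Figure~\ref{fig:triplepoint} sit inside the global components $L_*^{\epsilon}$. As in the liftability argument of \S\ref{ss:E_invariant}, the versal deformation is liftable for small $\abs{t}$ and its lift $f_t=(g_t,h)$ is a continuous path of embeddings passing through $t=0$ (here $f_0$ is still an embedding because the three sheets at the triple point are separated in the $x_{6k}$-direction by $h$). Hence $\calH(f_t)$ is constant, and \eqref{eq:E=H-link} gives
\begin{equation*}
 E(g_t)-E(g_{-t})=-\frac14\bigl(\Sigma(g_t)-\Sigma(g_{-t})\bigr),
 \qquad
 \Sigma(g):=\sum_{(a,\epsilon)<(b,\epsilon')}(-1)^{\epsilon+\epsilon'}\,lk(L_a^{\epsilon},L_b^{\epsilon'}),
\end{equation*}
so I only have to record which summands of $\Sigma$ change across the triple point.

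Next I would invoke Proposition~\ref{prop:triplepoint} and Lemma~\ref{lem:ori_triple}: away from the triple point nothing moves, and the only double-point pieces that cross during the deformation are the three within-sheet pairs $\calS_i^1\sqcup\calS_j^1$ (top, $\bfx$-sheet), $\calS_i^0\sqcup\calS_p^1$ (middle, $\bfy$-sheet), and $\calS_j^0\sqcup\calS_p^0$ (bottom, $\bfz$-sheet), each of which changes its linking number or writhe by one common value $\kappa=\pm1$. In $\Sigma$ these pairs carry the signs $(-1)^{1+1}=+1$, $(-1)^{0+1}=-1$, and $(-1)^{0+0}=+1$. The point I would isolate as the heart of the argument is that a within-sheet pair contributes to $\Sigma$ only when its two pieces lie on \emph{distinct} global components; if they lie on the same component the crossing merely alters that component's writhe $w$, which never appears in \eqref{eq:E=H-link} and is thus invisible to $E$. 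Because the top and bottom sheets carry the equal labels $(1,1)$ and $(0,0)$ while the middle sheet carries the opposite labels $(0,1)$, identifying $A_i$ with $A_j$, or $A_j$ with $A_p$, collapses a linking term into a writhe, whereas identifying $A_i$ with $A_p$ turns the middle pair into $lk(L_i^0,L_i^1)$, which is still a genuine linking number since $L_i^0\neq L_i^1$.

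From here the case analysis is routine and covers all coincidence patterns of the $L_*^{\epsilon}$. When all six components are distinct, the three signed changes add to $(+1-1+1)\kappa=\kappa$, giving $\pm1/4$. When $A_i=A_j\neq A_p$ the top term becomes a writhe and the two survivors $-\kappa$ and $+\kappa$ cancel, so $E(g_t)=E(g_{-t})$; the pattern $A_j=A_p\neq A_i$ is identical by the top–bottom symmetry of Figure~\ref{fig:triplepoint}. When $A_i=A_p\neq A_j$ no term is lost and the signs combine to $(+1-1+1)\kappa=\kappa$; when all three coincide the top and bottom terms drop to writhes and the surviving middle term contributes $-\kappa$. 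In each surviving case the value is $\pm1/4$, which is precisely the asserted dichotomy.

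The main obstacle I anticipate is purely bookkeeping: correctly deciding, in each coincidence pattern, whether a given within-sheet crossing registers as a linking number (and with which sign) or only as a writhe, and confirming that the common-sign statement of Lemma~\ref{lem:ori_triple} is exactly what makes the two survivors cancel in the $\{i,j\}$ and $\{j,p\}$ patterns while reinforcing in the others. I would cross-check the sign accounting against the oriented local model drawn in Figure~\ref{fig:triplepoint}, and against the parallel self-tangency computation in Lemma~\ref{lem:jump_indefinite}, where the same cancellation mechanism—writhes invisible to $E$, opposite $(-1)^{\epsilon+\epsilon'}$ signs—already appears.
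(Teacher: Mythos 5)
Your proposal is correct and follows essentially the same route as the paper: reduce the jump to the change of the signed linking-number sum in \eqref{eq:E=H-link} (using constancy of $\calH$ along the lifted deformation), apply Lemma~\ref{lem:ori_triple} to identify the three within-sheet crossings with signs $+1,-1,+1$, and run the same five-case analysis distinguishing when a crossing registers as a linking number versus a writhe. Your organizing principle (same global component $\Rightarrow$ writhe, invisible to $E$; distinct components $\Rightarrow$ signed linking term) is exactly the mechanism the paper uses case by case, and your conclusions in each coincidence pattern agree with the paper's Cases 1--5.
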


\subsection{The case $k=1$}\label{ss:k=1}
For $g\in\emb{5}{3}$ the invariant $E$ is essentially the Smale invariant;

\begin{proposition}[{\cite{Ekholm01,Takase07}}]
If $g\in\emb{5}{3}$ (also regarded as $g\in\emb{6}{3}$ by composing $\R^5\hookrightarrow\R^6$), then $\calH(g)=E(g)=-\Omega(g)/12$, where $\Omega\colon\pi_0(\Imm{5}{3})\xrightarrow{\cong}\Z$ is the Smale invariant.
\end{proposition}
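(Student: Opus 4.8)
The plan is to treat the two equalities separately. For $\calH(g)=E(g)$ I would argue directly from the definition \eqref{eq:E=H-link}. Viewing $g\in\emb{5}{3}$ as a map into $\R^{5}\times\{0\}\subset\R^{6}$ gives a canonical lift $f:=(g,0)\in\emb{6}{3}$ with $p\circ f=g$; an embedding is vacuously a generic liftable immersion, so $E(g)$ is defined through this $f$. Since $g$ is an embedding, $p\circ f=g$ has empty self-intersection, there are no crossings $A_i$, and the linking-number sum in \eqref{eq:E=H-link} is empty. Hence $E(g)=\calH(f)=\calH(g)$, with no computation needed.

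For $E(g)=-\Omega(g)/12$ the idea is that, for embeddings factoring through $\R^{5}$, the Haefliger invariant is controlled entirely by the Gauss-map data that the Smale invariant records. First I would observe that the differential $\mathit{df}\colon\R^{3}\to\Inj_{6,3}$ of $f=(g,0)$ factors as $\R^{3}\xrightarrow{\mathit{dg}}\Inj_{5,3}\hookrightarrow\Inj_{6,3}$, since every tangent frame of $g$ lies in $\R^{5}\times\{0\}$; under $\Inj_{5,3}\simeq V_{5,3}$ and the Hirsch--Smale classification, the compactly supported homotopy class $[\mathit{dg}]\in\pi_{3}(V_{5,3})\cong\Z$ is precisely the Smale invariant $\Omega(g)$. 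On the embedding side the double-point contributions that produced the linking terms of Proposition~\ref{prop:I(X)} are absent, so that what survives in $\calH(g)$ is the tangential correction $c(f)=\frac{1}{6}\int_{\R^{3}}(\mathit{df})^{*}\mu$ of \S\ref{ss:Haefliger_inv}, which pairs $[\mathit{dg}]$ against a generator of $H^{3}_{\mathit{DR}}(V_{5,3})$. Thus both $g\mapsto\calH(g)$ and $g\mapsto\Omega(g)$ descend to homomorphisms on $\pi_{0}(\emb{5}{3})$, additive under connected sum, and it suffices to compare them on a single generator of the image of $\pi_{0}(\emb{5}{3})\to\pi_{0}(\Imm{5}{3})$.

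The remaining and genuinely hard point is to pin down the proportionality constant $-1/12$, which I would extract from the work of Ekholm \cite{Ekholm01} and Takase \cite{Takase07}. Concretely I would evaluate both invariants on one explicit embedding realizable in $\R^{5}$ whose Smale invariant is computed in \cite{Takase07} and whose Haefliger invariant is available (for instance via the value $\calH=\pm1$ obtained in \S\ref{s:example}), and read off that $\calH=\pm1$ matches $\Omega=\mp12$. The factor $12$ encodes the normalization of the generator of $\pi_{3}(V_{5,3})$ relative to Haefliger's generator of $\pi_{0}(\emb{6}{3})\cong\Z$ --- equivalently, the identification of the characteristic-class data computed by $c$ with the Smale invariant --- and is exactly the content supplied by \cite{Ekholm01,Takase07}. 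This normalization step is the one I expect to be the main obstacle, the rest of the argument being formal.
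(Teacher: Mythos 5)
Your first step is fine and is exactly what the paper does: $\calH(g)=E(g)$ is immediate from \eqref{eq:E=H-link} because $p\circ f=g$ has empty self-intersection. The second equality is where your argument breaks down, in two concrete places. First, it is not true that ``what survives in $\calH(g)$ is the tangential correction $c(f)$.'' The vanishing of the double-point data kills only the $I(X)$ and linking contributions; the term $I(Y)(f)/6$ does not vanish for an embedding into $\R^5\times\{0\}$. Indeed the remark immediately following this proposition in the paper records $E(g)=I(Y)(g)/6+c(g)$, equivalently $\Omega(g)=-2I(Y)(g)-12c(g)$, so $I(Y)$ carries an essential part of the invariant and cannot be discarded. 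Relatedly, $c$ alone is not a regular-homotopy invariant ($\mu$ is not closed, $d\mu=\rho_*\omega$), so the assertion that $c(f)$ ``pairs $[\mathit{dg}]$ against a generator of $H^3_{\mathit{DR}}(V_{5,3})$'' is unjustified.

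Second, your normalization step cannot be carried out as proposed: Haefliger's generator $\calS$ from \S\ref{s:example} does \emph{not} factor through $\R^5$ --- its projection to $\R^5$ has nonempty self-intersection, and in fact no embedding of $\emb{5}{3}$ can have $\calH=\pm1$, since for such embeddings $\calH(g)=-\sigma(V)/8$ with $\sigma(V)$ divisible by $16$. So you have produced no element on which both $\calH$ and $\Omega$ are computed, and the ``compare on one generator'' reduction is itself gapped (two homomorphisms on $\pi_0(\emb{5}{3})$ need not be proportional just because they agree on one element; you would need to know in advance that $\calH$ vanishes on $\ker\Omega$). The paper's actual proof avoids all of this: by \cite{Ekholm01} every $g\in\emb{5}{3}$ bounds a Seifert surface $V^4\hookrightarrow\R^5$ with $\Omega(g)=3\sigma(V)/2$, and by \cite[Corollary~2.4]{Takase07} one has $\calH(g)=-\sigma(V)/8$; eliminating $\sigma(V)$ gives $\calH(g)=-\Omega(g)/12$ pointwise, with no additivity argument and no explicit example needed. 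If you want to salvage your route, the missing ingredient is precisely such a formula expressing both invariants through a common quantity attached to $g$.
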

\begin{proof}
$\calH=E$ follows from \eqref{eq:E=H-link}, since $g$ has no self-intersection.
As explained in \cite{Ekholm01}, there exists a ``Seifert surface'' for $g$---that is, an embedding $V^4\hookrightarrow\R^5$ that restricts to $g\colon\partial V=\R^3\hookrightarrow\R^5$---and $\Omega(g)=3\sigma(V)/2$, where $\sigma$ denotes the signature.
\cite[Corollary~2.4]{Takase07} states that $\calH(g)=-\sigma(V)/8$ ($e_F=0$ for $g\in\emb{5}{3}$).
\end{proof}

\begin{remark}
If $g\in\emb{5}{3}$, then $E(g)=I(Y)(g)/6+c(g)$ by \eqref{eq:E=I(Y)}.
Thus $\Omega(g)=-2I(Y)(g)-12c(g)$.
\end{remark}

The double point set of generic $g\in\Imm{5}{3}$ is a classical link.
A result of Ogasa \cite{Ogasa02} characterizes which link can be realized as a double point set of an immersion $\R^3\looparrowright\R^5$.

\begin{theorem}[{A special case of \cite[Theorem~1.1]{Ogasa02}}]\label{thm:Ogasa}
For any link $L\subset S^3$, there exist embeddings $g^{\epsilon}\colon S^3\hookrightarrow\R^5$ ($\epsilon=0,1$) such that $(g^{\epsilon})^{-1}(g^0(S^3)\cap g^1(S^3))$ ($\epsilon=0,1$) are isotopic to the given $L$.
Moreover, we can choose $g^{\epsilon}$ to be isotopic to the natural inclusion.
\end{theorem}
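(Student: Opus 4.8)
The plan is to realize the two spheres as the standard inclusion and a graph-type normal pushoff of it, so that their intersection is cut out by a single $\R^2$-valued function whose zero set is $L$. Write $\R^5=\R^3\times\R^2$ and let $\iota_0\colon S^3\hookrightarrow\R^5$ be the standard inclusion, whose rank-$2$ normal bundle is trivial; fix a trivialization and an embedded tubular neighbourhood $\nu\cong S^3\times\R^2$. Set $g^0:=\iota_0$ and, for a smooth map $\Phi\colon S^3\to\R^2$ to be chosen, let $g^1$ be the normal pushoff $x\mapsto(\iota_0(x),\Phi(x))$, scaled so that its image stays inside $\nu$. Being a graph over $\iota_0$, the map $g^1$ is an embedding, and the homotopy $t\mapsto t\Phi$ is through embeddings, so $g^1$ is isotopic to $\iota_0$; thus both $g^{\epsilon}$ are isotopic to the standard inclusion. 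For $\Phi$ small enough that $\nu$ is embedded, $g^1(S^3)$ meets $g^0(S^3)=\iota_0(S^3)$ exactly where the normal coordinate vanishes, i.e.\ at $\iota_0(\Phi^{-1}(\zero))$, and there $g^1$ and $g^0$ agree; hence $(g^0)^{-1}$ and $(g^1)^{-1}$ of the intersection both equal $\Phi^{-1}(\zero)$.

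First I would arrange $\Phi^{-1}(\zero)=L$ with $\zero$ a regular value, using the Pontryagin--Thom construction. A choice of normal framing $f$ of $L\subset S^3$ produces, via Thom collapse, a map $S^3\to S^2=\R^2\cup\{\infty\}$ whose preimage of $\zero$ is exactly $L$. To keep $g^1$ a graph I need this map to avoid $\infty$, i.e.\ to land in $\R^2$, equivalently its Hopf invariant must vanish. The Hopf invariant of the framed link $(L,f)$ is the sum of all entries of its linking matrix, $\sum_i f_i+2\sum_{i<j}lk(L_i,L_j)$, which changes by $1$ when a single self-framing $f_i$ is changed by $1$; hence some integral framing makes it zero while leaving the underlying link type $L$ untouched (the framing records only transverse data, not the isotopy type of $\Phi^{-1}(\zero)$). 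For such an $f$ the Thom-collapse map is null-homotopic into $S^2$, so it is homotopic to a map missing $\infty$, and a homotopy supported away from $L$ yields a genuine $\Phi\colon S^3\to\R^2$ with $\Phi^{-1}(\zero)=L$.

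It remains to check transversality and the absence of spurious intersections. Since $\zero$ is a regular value, $d\Phi_x$ is onto along $L$, so $T_{\iota_0(x)}g^0(S^3)+T_{g^1(x)}g^1(S^3)=\R^5$ there and the intersection is a clean copy of $L$; keeping the image of $\Phi$ inside the embedded neighbourhood $\nu$ guarantees that $g^1(x)\in g^0(S^3)$ forces $\Phi(x)=\zero$ and then $x'=x$, so no intersection components other than $L$ occur. The main obstacle is the bookkeeping of the middle paragraph: the passage from an $S^2$-valued Thom-collapse map to an honest $\R^2$-valued function is exactly where the Hopf-invariant obstruction appears, and it is the freedom to re-choose the self-framings of $L$ (which alters only the diagonal of the linking matrix, not the link type) that lets us kill this single $\Z$-valued obstruction. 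Everything else is the standard graph-pushoff argument, which simultaneously delivers the isotopy of each $g^{\epsilon}$ to the natural inclusion demanded by the final clause of the statement.
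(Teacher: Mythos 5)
The paper does not actually prove this statement---it is quoted from Ogasa's paper---so your argument has to stand on its own. Your architecture (take $g^0$ to be the standard inclusion, $g^1$ the graph of a map $\Phi\colon S^3\to\R^2$ inside a trivialized tubular neighbourhood, so that both preimages of the intersection equal $\Phi^{-1}(\zero)$) is sound, and your first and last paragraphs are fine. The gap is in the middle step: vanishing of the Hopf invariant of the framed link $(L,f)$ is necessary but \emph{not} sufficient for the existence of $\Phi$ with $\Phi^{-1}(\zero)=L$ inducing the framing $f$. A null-homotopy of the Thom collapse map $T\colon S^3\to S^2$ is a free homotopy; to keep $\Phi^{-1}(\zero)=L$ you must homotope $T$ \emph{rel} a neighbourhood of $L$ into $S^2\setminus\{\infty\}$, which is equivalent to extending the framing projection $\partial N(L)\to S^1$ over the exterior $W=S^3\setminus\mathrm{int}\,N(L)$ as a map into $\R^2\setminus\{\zero\}\simeq S^1$. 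Writing $H^1(W)\cong\Z^m$ with generators $\alpha_i$ dual to the meridians, the only candidate class is $\sum_i\alpha_i$ (it must evaluate to $1$ on every meridian), and it restricts correctly to the $j$-th boundary torus iff it annihilates the $f_j$-framed longitude, i.e.\ iff $f_j+\sum_{i\ne j}lk(L_i,L_j)=0$. That is one condition \emph{per component}, strictly stronger than the single condition $\sum_j f_j+2\sum_{i<j}lk(L_i,L_j)=0$ once $L$ has at least two components. Concretely, for the Hopf link with framings $(0,-2)$ the Hopf invariant vanishes, yet no $\Phi$ inducing that framing exists: $\Phi/\abs{\Phi}$ would need winding number $0$ around the $0$-framed longitude of $L_1$, but that longitude is homologous to the meridian of $L_2$ in $W$, forcing winding number $1$. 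So the phrase ``a homotopy supported away from $L$'' is exactly the unproved (and for such framings false) assertion.

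The fix is easy and preserves everything else you wrote: choose on each component the framing $f_j:=-\sum_{i\ne j}lk(L_i,L_j)$. Then $\sum_i\alpha_i$ restricts correctly on every boundary torus, the framing projection extends over $W$ with values in $\R^2\setminus\{\zero\}$, and gluing it to the projection $N(L)\cong L\times D^2\to D^2$ produces $\Phi$ with $\Phi^{-1}(\zero)=L$ and $\zero$ regular --- no appeal to the Hopf invariant is needed (its vanishing for this framing is a consequence, not a hypothesis). With that replacement your graph pushoff, the isotopies $t\mapsto t\Phi$, and the transversality check all go through and the theorem follows.
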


Using this, we show the second half of Theorem~\ref{thm:main2} and Corollary~\ref{cor:Ohba}.

\begin{proof}[Proof of Theorem~\ref{thm:main2}, the second half]
We show that an arbitrarily large jump of $E$ at a single indefinite self-tangency can occur.
Any linear combination of Ekholm invariants $J$ and $\mathit{St}$ cannot satisfy this property, because their jumps are bounded \cite{Ekholm01}.

For any two-component link $L=K_1\cup K_2$ in $S^3$, choose $g^{\epsilon}\colon S^3\hookrightarrow\R^5$, $\epsilon=0,1$ as in Theorem~\ref{thm:Ogasa}.
Taking a suitable connected-sum of the standard inclusion $f_0\colon \R^3\hookrightarrow\R^5$ with $g^0$ and $g^1$, we obtain $g:=f_0\sharp g^0\sharp g^1\in\Imm{5}{3}$, which satisfies the following conditions:
\begin{enumerate}[(i)]
\item
	The self-intersection $A=A_1\sqcup A_2$ of $g$ satisfies $g^{-1}(A_i)=K_i^0\cup K_i^1$ with each $K_1^{\epsilon}\cup K_2^{\epsilon}$ included in the $g^{\epsilon}$-part $(g^{\epsilon})^{-1}(A)$ and isotopic to the given link $L$.
\item
	A lift of $g$ exists and can be obtained by lifting $g^1$-part into $\R^5\times\R_+$ and letting $g^0$-part remain inside $\R^5\times\{0\}$.
\end{enumerate}
$K_1^0\cup K_2^0$ is separated from $K_1^1\cup K_2^1$, by (i) above.
Take $q_i\in A_i$ and $p^{\epsilon}_i\in K_i^{\epsilon}$ so that $g^{\epsilon}(p^{\epsilon}_i)=q_i$.
Choose paths $\gamma^{\epsilon}\colon [0,1]\to\R^3\setminus(g^{\epsilon})^{-1}(A)$ from $p_1^{\epsilon}$ to $p_2^{\epsilon}$ in the $g^{\epsilon}$-part.
Then $C:=g(\gamma^0([0,1])\cup\gamma^1([0,1]))$ is a circle in $\R^5$, which can be seen as the image of a trivial knot.
Thus $C$ bounds an embedded $2$-disk $D$ in $\R^5$ whose interior transversely intersects $g$ at finitely many points outside $K_i^{\epsilon}$ (Figure~\ref{fig:indef_tangency}).
\begin{figure}[htb]
\centering
\unitlength 0.1in
\begin{picture}( 16.6500, 14.7000)(  3.3500,-16.7000)
%
{\color[named]{Black}{%
\special{pn 4}%
\special{ar 1200 600 50 200  1.5707963 4.7123890}%
}}%
%
{\color[named]{Black}{%
\special{pn 8}%
\special{ar 800 600 600 270  4.7123890 6.2831853}%
\special{ar 800 600 600 270  0.0000000 1.5707963}%
}}%
%
{\color[named]{Black}{%
\special{pn 8}%
\special{ar 1200 600 50 200  4.7123890 4.8083890}%
\special{ar 1200 600 50 200  5.0963890 5.1923890}%
\special{ar 1200 600 50 200  5.4803890 5.5763890}%
\special{ar 1200 600 50 200  5.8643890 5.9603890}%
\special{ar 1200 600 50 200  6.2483890 6.3443890}%
\special{ar 1200 600 50 200  6.6323890 6.7283890}%
\special{ar 1200 600 50 200  7.0163890 7.1123890}%
\special{ar 1200 600 50 200  7.4003890 7.4963890}%
\special{ar 1200 600 50 200  7.7843890 7.8539816}%
}}%
%
{\color[named]{Black}{%
\special{pn 8}%
\special{ar 1600 600 600 270  1.5707963 4.7123890}%
}}%
%
{\color[named]{Black}{%
\special{pn 8}%
\special{ar 1600 1400 600 270  1.5707963 4.7123890}%
}}%
%
{\color[named]{Black}{%
\special{pn 8}%
\special{ar 800 1400 600 270  4.7123890 6.2831853}%
\special{ar 800 1400 600 270  0.0000000 1.5707963}%
}}%
%
{\color[named]{Black}{%
\special{pn 8}%
\special{ar 1200 1400 50 200  4.7123890 4.8083890}%
\special{ar 1200 1400 50 200  5.0963890 5.1923890}%
\special{ar 1200 1400 50 200  5.4803890 5.5763890}%
\special{ar 1200 1400 50 200  5.8643890 5.9603890}%
\special{ar 1200 1400 50 200  6.2483890 6.3443890}%
\special{ar 1200 1400 50 200  6.6323890 6.7283890}%
\special{ar 1200 1400 50 200  7.0163890 7.1123890}%
\special{ar 1200 1400 50 200  7.4003890 7.4963890}%
\special{ar 1200 1400 50 200  7.7843890 7.8539816}%
}}%
%
{\color[named]{Black}{%
\special{pn 4}%
\special{ar 1200 1400 50 200  1.5707963 4.7123890}%
}}%
%
{\color[named]{Black}{%
\special{pn 13}%
\special{ar 800 1000 400 130  1.5707963 4.7123890}%
}}%
%
{\color[named]{Black}{%
\special{pn 8}%
\special{pa 800 330}%
\special{pa 400 330}%
\special{fp}%
}}%
%
{\color[named]{Black}{%
\special{pn 13}%
\special{ar 1600 1000 400 130  4.7123890 6.2831853}%
\special{ar 1600 1000 400 130  0.0000000 1.5707963}%
}}%
%
{\color[named]{Black}{%
\special{pn 8}%
\special{pa 800 1670}%
\special{pa 400 1670}%
\special{fp}%
}}%
%
{\color[named]{Black}{%
\special{pn 8}%
\special{pa 1600 1670}%
\special{pa 2000 1670}%
\special{fp}%
}}%
%
{\color[named]{Black}{%
\special{pn 8}%
\special{pa 1600 330}%
\special{pa 2000 330}%
\special{fp}%
}}%
\put(12.0000,-3.3000){\makebox(0,0)[lb]{$\sb{A_1}$}}%
\put(12.0000,-16.7000){\makebox(0,0)[lt]{$\sb{A_2}$}}%
\put(20.0000,-4.0000){\makebox(0,0)[lt]{$\sb{g^1(S^3)}$}}%
\put(4.0000,-4.0000){\makebox(0,0)[lt]{$\sb{g^0(S^3)}$}}%
%
{\color[named]{Black}{%
\special{pn 8}%
\special{ar 800 1400 50 270  1.5707963 4.7123890}%
}}%
%
{\color[named]{Black}{%
\special{pn 8}%
\special{ar 800 1400 50 270  4.7123890 4.7873890}%
\special{ar 800 1400 50 270  5.0123890 5.0873890}%
\special{ar 800 1400 50 270  5.3123890 5.3873890}%
\special{ar 800 1400 50 270  5.6123890 5.6873890}%
\special{ar 800 1400 50 270  5.9123890 5.9873890}%
\special{ar 800 1400 50 270  6.2123890 6.2873890}%
\special{ar 800 1400 50 270  6.5123890 6.5873890}%
\special{ar 800 1400 50 270  6.8123890 6.8873890}%
\special{ar 800 1400 50 270  7.1123890 7.1873890}%
\special{ar 800 1400 50 270  7.4123890 7.4873890}%
\special{ar 800 1400 50 270  7.7123890 7.7873890}%
}}%
%
{\color[named]{Black}{%
\special{pn 8}%
\special{ar 1600 1400 50 270  4.7123890 4.7873890}%
\special{ar 1600 1400 50 270  5.0123890 5.0873890}%
\special{ar 1600 1400 50 270  5.3123890 5.3873890}%
\special{ar 1600 1400 50 270  5.6123890 5.6873890}%
\special{ar 1600 1400 50 270  5.9123890 5.9873890}%
\special{ar 1600 1400 50 270  6.2123890 6.2873890}%
\special{ar 1600 1400 50 270  6.5123890 6.5873890}%
\special{ar 1600 1400 50 270  6.8123890 6.8873890}%
\special{ar 1600 1400 50 270  7.1123890 7.1873890}%
\special{ar 1600 1400 50 270  7.4123890 7.4873890}%
\special{ar 1600 1400 50 270  7.7123890 7.7873890}%
}}%
%
{\color[named]{Black}{%
\special{pn 8}%
\special{ar 1600 1400 50 270  1.5707963 4.7123890}%
}}%
%
{\color[named]{Black}{%
\special{pn 13}%
\special{ar 800 600 600 270  0.8374837 1.5707963}%
}}%
%
{\color[named]{Black}{%
\special{pn 13}%
\special{ar 800 1400 600 270  4.7123890 5.4457016}%
}}%
\put(8.0000,-7.7000){\makebox(0,0){$\sb{\gamma^0}$}}%
%
{\color[named]{Black}{%
\special{pn 13}%
\special{ar 1600 600 600 270  1.5707963 2.3041089}%
}}%
%
{\color[named]{Black}{%
\special{pn 13}%
\special{ar 1600 1400 600 270  3.9790764 4.7123890}%
}}%
\put(16.0000,-8.0000){\makebox(0,0)[lb]{$\sb{\gamma^1}$}}%
\put(12.0000,-10.0000){\makebox(0,0){$\sb D$}}%
%
{\color[named]{Black}{%
\special{pn 8}%
\special{ia 1200 1000 100 100  0.0000000 6.2831853}%
}}%
%
{\color[named]{Black}{%
\special{pn 4}%
\special{pa 1750 890}%
\special{pa 1500 1140}%
\special{fp}%
\special{pa 1630 890}%
\special{pa 1360 1160}%
\special{fp}%
\special{pa 1540 860}%
\special{pa 1200 1200}%
\special{fp}%
\special{pa 1180 1100}%
\special{pa 1110 1170}%
\special{fp}%
\special{pa 1110 1050}%
\special{pa 1010 1150}%
\special{fp}%
\special{pa 1230 810}%
\special{pa 900 1140}%
\special{fp}%
\special{pa 1080 840}%
\special{pa 800 1120}%
\special{fp}%
\special{pa 950 850}%
\special{pa 690 1110}%
\special{fp}%
\special{pa 790 890}%
\special{pa 590 1090}%
\special{fp}%
\special{pa 670 890}%
\special{pa 490 1070}%
\special{fp}%
\special{pa 510 930}%
\special{pa 420 1020}%
\special{fp}%
\special{pa 1320 840}%
\special{pa 1250 910}%
\special{fp}%
\special{pa 1430 850}%
\special{pa 1300 980}%
\special{fp}%
\special{pa 1850 910}%
\special{pa 1650 1110}%
\special{fp}%
\special{pa 1930 950}%
\special{pa 1780 1100}%
\special{fp}%
}}%
%
{\color[named]{Black}{%
\special{pn 8}%
\special{pa 400 1000}%
\special{pa 800 1000}%
\special{fp}%
\special{sh 1}%
\special{pa 800 1000}%
\special{pa 734 980}%
\special{pa 748 1000}%
\special{pa 734 1020}%
\special{pa 800 1000}%
\special{fp}%
}}%
\put(12.0000,-8.5000){\makebox(0,0)[lt]{$\sb{q_1}$}}%
\put(12.0000,-11.5000){\makebox(0,0)[lb]{$\sb{q_2}$}}%
\end{picture}%
\caption{The self-intersection of $g$ in the proof of Theorem~\ref{thm:main2}}
\label{fig:indef_tangency}
\end{figure}
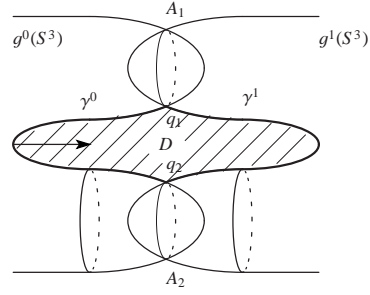
There is a homotopy that transforms $g$ only near $\gamma^0([0,1])$ so that $g(\gamma^0([0,1]))$ gets close to $g(\gamma^1([0,1]))$ along $D$ and eventually $D$ does not intersect $g$ in its interior.
During this homotopy a number of non-generic self-intersections may appear, but $A_1\sqcup A_2$ remains unchanged.
Thus we get $g'\in\Imm{5}{3}$, $g'\simeq g$, for which $\Int D\cap g'=\emptyset$ and we can choose a local coordinate of $\R^5$ around the tubular neighborhood of $D$ so that
\begin{enumerate}[(1)]
\item
	a tubular neighborhood of $g'(\gamma^0([0,1]))$ in $\R^3$ corresponds to
	$\{(x_1,x_2,x_3,0,0)\mid x_1^2+x_3^2<r,\ \abs{x_2}\le 1\}$ ($r>0$ small),
\item
	a tubular neighborhood of $g'(\gamma^1([0,1]))$ in $\R^3$ corresponds to
	$\{(x_1,x_2,0,x_1^2-x_2^2+1,x_5)\mid x_1^2+x_5^2<r,\ \abs{x_2}\le 1\}$, and
\item
	$D$ corresponds to $\{(0,x_2,0,x_4,0)\mid 0\le x_4\le 1-x_2^2\}$
\end{enumerate}
(see Figure~\ref{fig:indef_tangency2}).
\begin{figure}[htb]
\centering
\input{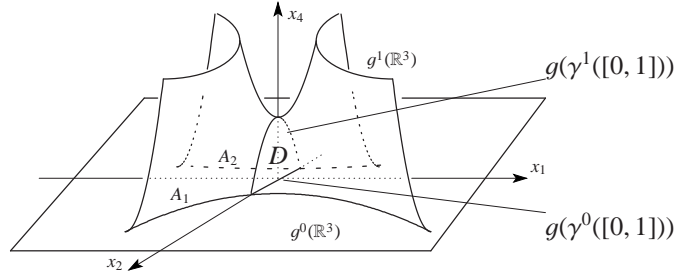}
\caption{A neighborhood of $D$ in $x_1x_2x_4$-plane}
\label{fig:indef_tangency2}
\end{figure}
This coordinate coincides with that in Proposition~\ref{prop:tangency}, and there exists a regular homotopy of $g'$ inside this coordinate in which an indefinite self-tangency between $A_1$ and $A_2$ occurs.
By Lemma~\ref{lem:jump_indefinite}, $E$ jumps by $\pm(lk(K_1^0,K_2^0)+lk(K_1^1,K_2^1))/4=\pm lk(K_1,K_2)/2$ at this self-tangency.
This jump can be arbitrarily large, since $L=K_1\sqcup K_2$ is arbitrary.
\end{proof}

\begin{proof}[Proof of Corollary~\ref{cor:Ohba}]
The immersion $g$ constructed in the proof of Theorem~\ref{thm:main2} can be lifted to $f\in\emb{6}{3}$ by lifting the $g^1$-part into $\R^5\times\R_+$.
Since $g^{\epsilon}$'s are isotopic to the standard embedding, we see that $\calH(f)=0$.
Changing the crossing at $A_1$, by \eqref{eq:main2} we have
\begin{align*}
 0-\calH(f_{\{1\}})&=\frac{1}{2}(lk(K_1^0,K_2^0)-lk(K_1^0,K_2^1)-lk(K_1^1,K_2^0)+lk(K_1^1,K_2^1))\\
 &=lk(K_1,K_2).
\end{align*}
Here we use the fact that $K_i^0$ and $K_j^1$ are separated and that $K_1^{\epsilon}\sqcup K_2^{\epsilon}$ is isotopic to the given link $L=K_1\sqcup K_2$.
This means that an embedding $\R^3\hookrightarrow\R^6$ with arbitrary Haefliger invariant can be obtained by a single crossing change from the trivial embedding.
\end{proof}

\appendix

\section{The Jacobian in the proof of Proposition~\ref{prop:I(X)}}\label{s:Jacobian}
The Jacobian matrix $J(\varphi_X)$ at $\vec{\xi}$ in the proof of Proposition~\ref{prop:I(X)} is given by
\[
 -\left(
 \begin{array}{c|c||c|c||c|c||c|c}
  I_{2k-1} &        & -I_{2k-1} & & & & & \\
 \hline
           & I_{2k} &           & & & & & \\
 \hline
           &        &           & -I_{2k} & & & & \\
 \hline\hline
           &        &           &                   &           &                & -I_{2k-1} & \\
 \hline
           &        &           &                   & I_{2k-1} &                & & \\
 \hline
           &        &           &                   &           & \begin{array}{c|c}I_{2k-1} & \zero \\ \hline {}^t\zero & 1 \\ \hline {}^t\zero & 0\end{array}        &           & \begin{array}{c|c}-I_{2k-1} & \zero \\ \hline {}^t\zero & 0 \\ \hline {}^t\zero & -1\end{array} \\
 \hline\hline
           &        & I_{2k-1}  &                   & -I_{2k-1} & & & \\
 \hline
           &        &           & I_{2k-1}|\,\zero  &           & -I_{2k-1}|\,\zero & &
 \end{array}
 \right)
\]
where $I_N$ is the $N\times N$-identity matrix and $\zero\in\R^{2k-1}$ is the zero vector.
The rows correspond to the bases of $T_{(e_{6k-1},e_{6k-1},e_{4k-1})}(S^{6k-1}\times S^{6k-1}\times S^{4k-2})$ and the columns correspond to the natural basis of $T_{\vec{\xi}}\Conf_4(\R^{4k-1})\cong T_{\vec{\xi}}\R^{16k-4}$.
Its determinant is $-1$.

\providecommand{\bysame}{\leavevmode\hbox to3em{\hrulefill}\thinspace}
\providecommand{\MR}{\relax\ifhmode\unskip\space\fi MR }
\providecommand{\MRhref}[2]{%
  \href{http://www.ams.org/mathscinet-getitem?mr=#1}{#2}
}
\providecommand{\href}[2]{#2}

\end{document}